\documentclass[11pt,reqno]{amsart}

\textheight=22truecm
\textwidth=15truecm
\voffset=-1cm
\hoffset=-1.25cm

\usepackage{amsmath, amsthm, amssymb, esint}
\usepackage{amsfonts}
\usepackage{bbm}
\usepackage[ansinew]{inputenc}
\usepackage[dvips]{epsfig}
\usepackage{graphicx}
\usepackage[english]{babel}
\usepackage{enumerate}
\usepackage{enumitem}
\usepackage{hyperref}
\usepackage{fbox}
\usepackage{mathrsfs}
\usepackage{color}
 \usepackage{accents}

\theoremstyle{plain}
\newtheorem{thm}{Theorem}[section]
\newtheorem{cor}[thm]{Corollary}
\newtheorem{lem}[thm]{Lemma}
\newtheorem{prop}[thm]{Proposition}

\newtheorem{pb}[thm]{Problem}

\theoremstyle{definition}
\newtheorem{defi}[thm]{Definition}

\theoremstyle{remark}
\newtheorem{rem}[thm]{Remark}

\numberwithin{equation}{section}

\newcommand{\de}{\partial}

\newcommand{\R}{\mathbb{R}}

\newcommand{\N}{\mathbb{N}}

\newcommand{\eps}{\varepsilon}

\newcommand{\be}{{\boldsymbol{e}}}

\newcommand{\PosS}{\{u>0\}}
\def\vem{\vspace{1em}}
\def\CSetC{\Lambda(u)}
\def\CSetT{\Lambda(u)}

\newcommand{\average}{{\mathchoice {\kern1ex\vcenter{\hrule height.4pt
width 6pt depth0pt} \kern-9.7pt} {\kern1ex\vcenter{\hrule
height.4pt width 4.3pt depth0pt} \kern-7pt} {} {} }}
\newcommand{\ave}{\average\int}
\def\R{\mathbb{R}}

\DeclareMathOperator*{\argmin}{arg\,min}

\title[Graphical Solutions for One-Phase Free Boundaries]{Graphical Solutions to  One-Phase \\Free Boundary Problems}

\author{Max Engelstein}
\address{127 Vincent Hall 206 Church St. SE Minneapolis, MN, USA 55455}
\email{mengelst@umn.edu}

\author{Xavier Fern\'andez-Real}
\address{EPFL SB, Station 8, CH-1015 Lausanne, Switzerland}
\email{xavier.fernandez-real@epfl.ch}

\author{Hui Yu}
\address{Department of Mathematics, National University of Singapore, Singapore 119076}
\email{huiyu@nus.edu.sg}

\keywords{One-phase problem, Alt--Caffarelli functional, Thin one-phase problem, Graphical solutions.}

\subjclass[2020]{35R35.}

\thanks{M.E. was partially supported by NSF DMS 2000288 and NSF CAREER 2143719. X.F. was supported by the Swiss National Science Foundation (SNF grants 200021\_182565 and PZ00P2\_208930),   by the Swiss State Secretariat for Education, Research and lnnovation (SERI) under contract number MB22.00034, and by the AEI project PID2021-125021NAI00 (Spain). H.Y. was supported by the Presidential Young Professor Fund (National University of Singapore). All three authors would like to thank the anonymous referee for their careful reading and many comments which improved the manuscript.}

\begin{document}

\begin{abstract}
We study viscosity solutions to the classical one-phase problem and its thin counterpart. In low dimensions, we show that when the free boundary is the graph of a continuous function, the solution is the half-plane solution. This answers, in the salient dimensions, a one-phase free boundary analogue of Bernstein's problem for minimal surfaces. 

As an application, we also classify monotone solutions of semilinear equations with a  bump-type nonlinearity.  
\end{abstract}

\maketitle 

\tableofcontents

%%%%%%%%%%%%%%%%%%%%%%%%%%%%%%%%%%%%%%%%%%%%%%%%%%%%%%%%%%%%%%%%%%%%%%%%%%%%%%%%%%%%%%%%%%%%%%%%%%%%%%%%%%%%%%%%%%%%%%%%%%%%%%%%%%%%%%%%%%%%%%%%%%%%%%%%%%%%%%%%%%%%%%%%%%%%%%%%%%%%
\section{Introduction}
In this work, we deal with the Bernoulli free boundary problem in both the classical formulation, also known as the \textit{classical one-phase problem},
\begin{equation}
\label{eq:viscosity_sol}
\left\{
\begin{array}{rcll}
u   &\ge& 0 &\quad\text{in}\quad \Omega\subset\R^n,\\
\Delta u &=& 0&\quad\text{in}\quad \{u > 0\}\cap\Omega,\\
|\nabla u| &=& 1&\quad\text{on}\quad \partial\{u > 0\}\cap\Omega,
\end{array}
\right.
\end{equation}
and the thin formulation, also known as the \textit{thin one-phase problem},
\begin{equation}
\label{eq:visc_sol_s}
\left\{
\begin{array}{rcll}
u &\ge& 0 &\quad\text{in}\quad \Omega\subset\R^{n+1},\\
\Delta u &=& 0&\quad\text{in}\quad \{u > 0\}\cap\Omega,\\
\partial^{1/2}_\nu u &=& 1 &\quad\text{on}\quad \de\{u>0\}\cap\{x_{n+1}=0\}\cap\Omega.
\end{array}
\right.
\end{equation}
Here the `half-normal derivative' $\partial^{1/2}_\nu u$ is defined as
\[
\partial^{1/2}_\nu u(z) := \lim_{t\downarrow 0}\, t^{-1/2} u(z+t\nu(z)),
\]
where $\nu\in \mathbb{S}^{n}\cap \{x_{n+1} = 0\}$ is the inner normal vector along the free boundary, 
\[
\partial_{\R^n}\left( \{u>0\}\cap\{x_{n+1}=0\}\right).
\]
 In each case, the solution $u$ is a continuous function satisfying the equations in the \textit{viscosity sense}. For the precise definitions of viscosity solutions, see Definitions \ref{DefViscositySolutionsClassical} and \ref{DefViscositySolutionThin}.

For the classical one-phase problem, the zero level set of the solution is sometimes referred to as the \textit{contact set}, namely, 
\begin{equation}
\label{ContactSetClassical}
\Lambda(u):=\{u=0\}.
\end{equation}
For the thin version, the contact set is contained inside a lower-dimensional subspace,
\begin{equation}
\label{ContactSetThin}
\Lambda(u):=\{u=0\}\cap\{x_{n+1}=0\}.
\end{equation}
Outside the contact sets, the solutions are harmonic. Along the boundary of the contact sets, the so-called \textit{free boundaries}, both the value and the rate of change of the solutions are prescribed, leading to an overdetermined problem. As such, not every set can be the free boundary of a solution, and to understand a solution it (essentially) suffices to understand the free boundary. 

\vem 

There has been a lot of research devoted to understanding the free boundary of both \ref{eq:viscosity_sol} and \ref{eq:visc_sol_s} (see below for more details), of which one important aspect is the classification of solutions in the entire space. 

Such a classification has recently been completed for the obstacle problem (another free boundary problem) by Eberle--Figalli--Weiss \cite{EFW22} (see also \cite{ESW20, EY23b}), concluding a program that lasted for more than 90 years (this classification also has implications for the fine properties of free boundaries, c.f.~\cite{ESWPreprint}).  For the thin obstacle problem, a partial classification has been achieved in \cite{ERW21, EY23}. In both cases, the results state that, under some restrictions,  the space of entire solutions is finite-dimensional.

The obstacle problem and its thin counterpart  arise as Euler--Lagrange equations of convex energy functionals. The convexity of the functionals implies that  viscosity solutions are minimizers of the energy, allowing the usage of both variational and nonvariational techniques. 

For our problems \eqref{eq:viscosity_sol} and \eqref{eq:visc_sol_s}, however, the underlying functionals are not convex, and the spaces of viscosity solutions are much wider than minimizers of the functionals (see Definition \ref{DefMinimizersClassical}  and Definition \ref{DefMinimizersthin} for the definitions of minimizers). Indeed, the original motivation for the viscosity framework is to construct non-minimizing solutions \cite{Caf88}, which show up naturally in domain variation problems \cite{HHP11} and fluid mechanics \cite{BSS76, CG11}. 

This flexibility of the viscosity framework allows a wide-range of behaviors, and some important energy-based tools are no longer available (for instance, the nondegeneracy property may not hold for general viscosity solutions, \cite{KW22}).  As a result, even in two dimensions, the best classification result for smooth solutions to the classical one-phase problem requires topological restrictions \cite{T14, JK16}. For the classical one-phase problem in higher dimensions, or for the thin one-phase problem, a full classification of entire solutions seems out of reach. This can be thought of in analogy with globally defined minimal surfaces, for which a plethora of examples exist in $\mathbb R^3$, but there is no complete list (see, e.g. \cite{CM11}). 

\vem 

As a starting point for this classification, we propose to study solutions to \eqref{eq:viscosity_sol} and \eqref{eq:visc_sol_s} with graphical free boundaries. To be precise, we study viscosity solutions  whose contact sets (see \eqref{ContactSetClassical} and \eqref{ContactSetThin}) are subgraphs of continuous functions. 
Under this topological assumption, we show that viscosity solutions are minimizers for the underlying energy functionals (a result which may be of independent interest). This allows us to classify, in low dimensions, the space of entire viscosity solutions with graphical free boundaries. 

Our approach is inspired by the Bernstein conjecture for minimal surfaces, which states that the only graphical minimal surface is the hyperplane \cite{Ber15}. It was shown that $n$-dimensional minimal graphs in  $\mathbb R^{n+1}$ must be hyperplanes for $n \leq 7$ (see \cite{Fle62,DeG65, Alm66, Sim68}); while in higher dimensions, it is false by an example given in \cite{BDG69}. Similarly, we do not expect our results to hold in higher dimensions (large enough to allow for singular minimizers), though no analogue to the construction in \cite{BDG69} has been found for \eqref{eq:viscosity_sol} or \eqref{eq:visc_sol_s}.

In the following, we describe our results in the classical regime \eqref{eq:viscosity_sol} in subsection~\ref{sec:classical}, and in the thin regime \eqref{eq:visc_sol_s} in subsection~\ref{sec:thinreg}.

%%%%%%%%%%%%%%%%%%%%%%%%%%%%%%%%%%%%%%%%%%%%%%%%%%%%%%%%%%%%%%%%%%%%%%%%%%%%%%%%%%%%%%%%%%%%%%%%%%%%%%%%%%%%%%%%%%%%%%%%
\subsection{The classical regime}
\label{sec:classical}
The classical one-phase problem \eqref{eq:viscosity_sol} arises as the Euler-Lagrange equation to the \textit{Alt--Caffarelli functional}
\begin{equation}
\label{eq:JOm}
\mathcal{J}_\Omega(v) = \int_\Omega |\nabla v|^2 + |\{v > 0\}|,\qquad\text{for}\quad v \ge 0,\quad v\in H^1(\Omega),
\end{equation}
where $\Omega$ is a domain in $\R^n$.

Motivated by models in flame propagation and jet flows \cite{BL82, ACF82, ACF82b, ACF83,CV95}, this energy was originally studied from a mathematical point of view by Alt and Caffarelli in \cite{AC81}. Since then, regularity  of the minimizer and its free boundary has been extensively studied, see, for instance,  \cite{AC81, Caf87, D11, ESV20, FY23}. We refer to \cite{CS05} for a thorough introduction to the classical theory, and refer to \cite{Vel19} for a modern treatment of the one-phase problem and related topics. 

\vem

Even homogeneous minimizers of \eqref{eq:JOm} (also known as \textit{minimizing cones}) have not been fully classified.  By the works of Caffarelli--Jerison--Kenig \cite{CJK04} and Jerison--Savin \cite{JS15}, it is known that for $n\le 4$, the only homogeneous minimizer\footnote{The result applies to a larger class called \emph{stable solutions}. They are critical points of the functional \eqref{eq:JOm} with nonnegative second variations.} is, up to a rotation, the \textit{half-plane solution}
\begin{equation}\label{HalfSpaceSolution}
u(x)=x_n^+.
\end{equation}
While in dimension $7$, De Silva-Jerison \cite{DJ09} provides a nonflat minimizing cone.

The largest dimension in which homogeneous minimizers must be flat is currently unknown. In this work, we denote the largest such dimension by $n_{\rm local}^*$, that is,
\begin{equation}
\label{LargestDimensionClassical}
n_{\rm local}^*:=\max\{n: \text{minimizing cones of }\eqref{eq:JOm} \text{ in $\R^n$ are rotations of }\eqref{HalfSpaceSolution}\}.
\end{equation} 
With the aforementioned works, we have
$$
4\le n_{\rm local}^*\le6.
$$

Without assuming homogeneity, minimizers exhibit even richer behavior. For instance, associated with each nonflat minimizing cone, there is a family of minimizers whose free boundaries foliate the entire space $\R^n$ (see \cite{DJS22, ESV22}).

\vem

Solutions to the one-phase problem \eqref{eq:viscosity_sol} that are not minimizers of the Alt--Caffarelli energy functional \eqref{eq:JOm} arise naturally in problems involving domain variations \cite{HHP11} and fluid mechanics \cite{BSS76,CG11}. In these contexts, the positive set, $\PosS$, of a solution $u$ in the entire space $\R^n$  is sometimes referred to as an \textit{exceptional domain}. The classification of exceptional domains is an important topic that so far has been successful only for  special classes of domains.

With the half-plane solution from \eqref{HalfSpaceSolution}, we see that the half-plane $\{x_n>0\}$ is an exceptional domain. The union of two half-planes, $\{x_n>0\}\cup\{x_n<-a\}$ with $a\ge0$, is also an exceptional domain corresponding to the solution $u=x_n^++(x_n+a)^-$. By taking a truncation of the fundamental solution, we see that the exterior of the ball $\R^n\setminus B_R$ is an exceptional domain if $R>0$ is chosen properly. Apart from these classical examples, a family of catenoid-like domains were discovered by Hauswirth--H\'elein--Pacard \cite{HHP11} in the plane, and by Liu--Wang--Wei \cite{LWW21} in general dimensions. A family of periodic exceptional domains appeared in \cite{BSS76}.

By the work of Traizet \cite{T14}, we know that in the plane, these are all the exceptional domains whose boundaries are smooth and have finitely many components.  A similar result was obtained by Khavinson--Lundberg--Teodorescu \cite{KLT13}, who also showed that in general dimensions, the exterior of a ball is the only smooth exceptional domain with bounded complement. 

\vem

%To get more flexibility in the construction of solutions to the one-phase problem \eqref{eq:viscosity_sol}, Caffarelli proposed the viscosity framework in \cite{Caf88} (see Defi\-nition \ref{DefViscositySolutionsClassical} below). Obtained by Perron's method, these solutions may not be minimizers of the energy \eqref{eq:JOm} (though every minimizer is a viscosity solution). As a result, many energy-based tools are no longer available in the study of viscosity solutions, and a complete classification of entire viscosity solutions seems out of reach. 

In the first part of this work, we deal with viscosity solutions to \eqref{eq:viscosity_sol} with graphical free boundaries. Concerning these solutions, our first main result states:
\begin{thm}
\label{thm:main0}
Let  $u$ be a viscosity solution to the classical one-phase problem \eqref{eq:viscosity_sol} in $\R^n$ for
$$n\le n_{\rm local}^*+1.$$ 

If its contact set $\Lambda(u)$ is the subgraph of a continuous function,
then we have
$$
u = x_n^+
$$
up to a rotation and a translation.
\end{thm}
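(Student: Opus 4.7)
The overall strategy is inspired by the classical Bernstein approach: first, use the graphical structure of the contact set to upgrade $u$ from an arbitrary viscosity solution to a local minimizer of the Alt--Caffarelli functional $\mathcal{J}$, and then invoke the classification of minimizers in low dimensions to conclude. The dimension threshold $n\le n_{\rm local}^*+1$ mirrors exactly the bound that makes graphical minimal surfaces flat through dimension reduction.

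\textbf{Step 1 (Monotonicity).} Write the contact set as the subgraph $\{x_n\le f(x')\}$ of a continuous function $f\colon\R^{n-1}\to\R$, so that the positive set $\{u>0\}=\{x_n>f(x')\}$ is an epigraph. For $t>0$ the vertical translate $u_t(x',x_n):=u(x',x_n-t)$ has positive set strictly contained in $\{u>0\}$, and $u_t=0\le u$ along the free boundary of $u_t$. A viscosity comparison argument---both functions are harmonic on $\{u_t>0\}$ and their free boundaries do not meet---yields $u_t\le u$, and letting $t\downarrow 0$ gives $\partial_{x_n}u\ge 0$ in a suitable viscosity sense.

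\textbf{Step 2 (From viscosity solution to minimizer).} Given a ball $B_R$, let $v_R$ denote the minimizer of $\mathcal{J}_{B_R}$ with boundary datum $u|_{\partial B_R}$. The plan is to show $u\equiv v_R$ by combining the epigraph structure of $\{u>0\}$ with the monotonicity of Step 1. A natural approach is to slide $v_R$ vertically against $u$ and, at first contact on the free boundary, invoke a Hopf-type argument---both functions satisfy $|\nabla\cdot|=1$ there---to rule out any vertical separation between the two free boundaries. This would yield the statement, advertised in the introduction and of independent interest, that any viscosity solution to \eqref{eq:viscosity_sol} with graphical contact set is a local minimizer of $\mathcal{J}$.

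\textbf{Step 3 (Classifying graphical minimizers).} Blowing down $u$ produces a minimizing cone $u_\infty$ in $\R^n$ whose contact set is a homogeneous subgraph. Any singular free boundary point $y_0\ne 0$ of $u_\infty$ would, after blow-up, yield a minimizing cone translation-invariant in the direction of $y_0$, equivalently a minimizing cone in $\R^{n-1}$. Since $n-1\le n_{\rm local}^*$, such a cone must be a half-plane, contradicting the singularity. Hence $u_\infty$ is smooth away from the origin, and a direct argument---using the $1$-homogeneous graphical structure together with the Bernoulli condition on the link of the cone---should force $u_\infty=x_n^+$ up to rotation. Standard regularity theory then propagates this flatness at infinity back to $u$, yielding $u=x_n^+$ up to rotation and translation.

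\textbf{Main obstacle.} Step 2 is the substantive step. The viscosity class is strictly larger than the minimizer class---this was precisely Caffarelli's motivation for introducing the viscosity framework \cite{Caf88}---so minimality is a genuine upgrade. Monotonicity alone does not suffice in general; it is its combination with the epigraph structure of $\{u>0\}$ that should close the comparison. The delicate point is to make the Hopf-type contact argument work at arbitrary free boundary points without assuming regularity of $u$ beyond viscosity continuity, and to handle the possibility that $f$ is merely continuous, so that the free boundary is not a priori rectifiable.
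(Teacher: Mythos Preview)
Your overall architecture matches the paper's: monotonicity $\Rightarrow$ minimality $\Rightarrow$ blow-down to a cone $\Rightarrow$ flatness. But Step~2, which you correctly flag as the heart of the matter, has two concrete gaps that the paper has to work hard to close.

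\textbf{What is slid, and how.} You propose to slide the minimizer $v_R$ vertically against $u$. But $v_R$ has prescribed data $u|_{\partial B_R}$; once you translate it, it no longer agrees with $u$ on $\partial B_R$, and you lose the boundary comparison that anchors the sliding. The paper does the opposite: it slides (inf- and sup-convolutions of) the \emph{translates $u_\tau$ of $u$} against the minimizer $w$. Monotonicity from Step~1 gives $u_\tau\ge u=w$ on $\partial D$ for $\tau>0$, so the boundary ordering survives the slide. The inf/sup-convolutions are essential, not cosmetic: they equip the free boundary of the slid function with an interior/exterior tangent ball, which is what makes the Hopf-type argument fire at the contact point. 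Without them you are trying to run Hopf at a free boundary point of a general viscosity solution, and there is no strict comparison principle available in that generality.

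\textbf{Boundary stickiness.} Even with the correct sliding, the first contact point $\bar x$ need not lie on the interior free boundary; it can land on $\partial D$ where $u=w=0$. In that case the Hopf argument is meaningless---there is no free boundary condition on $\partial D$. The paper rules this out by invoking the Chang-Lara--Savin boundary regularity theorem: near such a point the minimizer's free boundary detaches from $\partial D$ in a $C^1$ fashion with $|\nabla w|\ge 1$, which restores the expansion needed for Hopf. Your ``Main obstacle'' paragraph does not anticipate this scenario, and it is precisely the one that drives most of the technical content of the paper (and, in the thin case, forces the authors to prove a new boundary regularity result from scratch).

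Two smaller remarks. Your Step~1 comparison ``$u_t\le u$'' is being asserted on the unbounded epigraph $\{u_t>0\}$; you need either a Phragm\'en--Lindel\"of argument using the global Lipschitz bound, or the paper's alternative route (a contradiction argument at a sequence where $\partial_n u$ approaches its negative infimum, rescaled to unit distance from the contact set). In Step~3, the ``direct argument'' that a graphical minimizing cone with smooth link is flat is not a one-liner: the paper uses a Hopf argument on $\partial_{\be} u$ to show that either the graphical direction is strictly transverse to the free boundary on all of $\mathbb{S}^{n-1}$, or $u$ is invariant in that direction; then a minimization over admissible directions forces the latter, reducing the dimension by one.
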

Recall the critical dimension $n_{\rm local}^*$ and the contact set $\Lambda(u)$ defined in \eqref{LargestDimensionClassical} and \eqref{ContactSetClassical} respectively. 

\begin{rem}
For smooth $\partial\{u>0\}$ in $\R^2$, Hauswirth--H\'elein--Pacard showed a similar result  (substituting the assumption on the contact set with the related assumption of monotonicity in a direction) in \cite{HHP11} with complex variable techniques. 
\end{rem}

\begin{rem}\label{rem:assumptions}
While we do not claim the condition requiring the graph to be continuous is sharp, some regularity assumption is necessary on the graphical free boundary. 

Indeed, taking $u(x_1,x_2)$ to be any solution in $\R^2$ (for instance, the catenoid-type solution in \cite{HHP11}), we can extend it  to $\R^3$ trivially as $\overline{u}(x_1,x_2,x_3):=u(x_1,x_2)$. For such a function, its contact set is the subgraph of a (generalized) function of the form $x_3=\varphi(x_1,x_2)$ with $\varphi=-\infty$ in $\PosS$ and $\varphi=+\infty$ in $\{u=0\}$.

%It remains to be seen what is the sharp assumption in terms of the regularity of the free boundary graph. 
\end{rem}

\vem 
To prove Theorem \ref{thm:main0}, the natural idea is to  reduce the problem to the study of homogeneous solutions by a blow-down procedure. Unfortunately, due to the lack of variational tools 
(monotonicity formula, nondegeneracy property, etc.), 
a blow-down analysis for general viscosity solutions seems difficult.

For the class of solutions we are considering, however, we can show they are actually minimizers of the Alt--Caffarelli energy \eqref{eq:JOm}. This is one of our main technical contributions to the classical one-phase problem and should be of independent interest (see, e.g. the discussion in the introduction of \cite{DJ11}):

\begin{thm}
\label{MinimizingClassicalIntro}
Suppose that $u$  is a viscosity solution to the classical one-phase problem \eqref{eq:viscosity_sol} in $\R^n$, and that its contact set $\CSetC$ is the subgraph of a continuous function. 

Then $u$ is a global minimizer of the Alt--Caffarelli energy \eqref{eq:JOm}. 
\end{thm}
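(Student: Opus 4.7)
The aim is to show that $u$ minimizes \eqref{eq:JOm} on every bounded domain. The plan proceeds in two steps: (i) promote the graphical structure of $\CSetC$ to pointwise monotonicity of $u$ along $e_n$; (ii) slide the monotone family of translates against the energy minimizer on a ball and force equality.

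\textbf{Step 1 (Monotonicity in $e_n$).} By assumption $\CSetC = \{(x',x_n) : x_n \le \varphi(x')\}$ for some continuous $\varphi : \R^{n-1} \to \R$ (the degenerate cases $\varphi \equiv \pm\infty$ are trivial). For $t > 0$, set $u^t(x) := u(x + te_n)$; then $\{u^t > 0\} = \{x_n > \varphi(x') - t\} \supsetneq \PosS$. On the open set $\PosS$ both $u$ and $u^t$ are harmonic and nonnegative, while $u \equiv 0 \le u^t$ on $\partial\PosS$. Hence $u^t - u$ is harmonic in $\PosS$ with nonnegative boundary values. A Phragm\'en--Lindel\"of-type maximum principle on the unbounded domain $\PosS$, justified by at-most-linear growth of $u$ (itself a consequence of the condition $|\nabla u| = 1$ on $\partial\PosS$ via a standard barrier), yields $u^t \ge u$ in $\R^n$. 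Sending $t \downarrow 0$, $u$ is nondecreasing along $e_n$.

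\textbf{Step 2 (Minimality).} Fix $R > 0$ and let $w$ minimize $\mathcal{J}_{B_R}$ in $\{v \in H^1(B_R) : v \ge 0,\ v|_{\partial B_R} = u|_{\partial B_R}\}$; then $w$ is a viscosity solution of \eqref{eq:viscosity_sol} in $B_R$. For $t > 0$, the monotonicity from Step~1 gives $u^t \ge u = w$ on $\partial B_R$, with the free boundaries of $u^t$ and $w$ strictly separated along $\partial B_R$. The standard one-phase comparison principle (viscosity supersolution $u^t$ against viscosity subsolution $w$) then gives $u^t \ge w$ in $B_R$; letting $t \downarrow 0$, $u \ge w$. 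An analogous sliding with $t < 0$ yields $u \le w$, so $u = w$ in $B_R$. Hence $\mathcal{J}_{B_R}(u) = \mathcal{J}_{B_R}(w) \le \mathcal{J}_{B_R}(v)$ for every admissible competitor $v$, and, $R$ being arbitrary, $u$ is a global minimizer.

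\textbf{Main obstacles.} The principal difficulty is the sliding comparison in Step~2: as $t \downarrow 0$, the free boundaries of $u^t$ and $w$ may first touch tangentially inside $B_R$, and ruling this out requires a Hopf-lemma-type argument at free boundary points, exploiting the condition $|\nabla u| = 1$ in the viscosity sense and the strong maximum principle on the connected components of $\{u^t > 0\} \cap \{w > 0\}$ where interior touches may occur. A secondary obstacle is justifying the Phragm\'en--Lindel\"of step in Step~1 on the unbounded positive set, which requires sharp control of the growth of $u$ at infinity and careful treatment of the unbounded components of $\partial\PosS$.
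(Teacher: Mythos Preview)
Your two-step outline matches the paper's architecture (Lemma~\ref{lem:monotone} for monotonicity, then the sliding argument in Proposition~\ref{prop:cont_graph}), but two of the three genuine difficulties are either misidentified or absent.

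\textbf{Boundary stickiness.} The most serious omission is in Step~2. You assert that the free boundaries of $u^t$ and $w$ are ``strictly separated along $\partial B_R$'' and thereafter treat the critical touching point as an interior phenomenon. But the sliding argument can produce a touching point $\bar x \in \partial B_R \cap \{u=0\} \cap \overline{\{w>0\}}$: here $w(\bar x)=u(\bar x)=0$ and, for small $t_*$, also $u^{t_*}(\bar x)=0$, so nothing you have written rules this case out. The paper singles this out as the hardest step and resolves it via the boundary regularity of minimizers due to Chang-Lara--Savin (Theorem~\ref{thm:CS}): near such $\bar x$ the free boundary of $w$ is $C^1$ with $|\nabla w|\ge 1$, which is exactly what is needed to run the Hopf-type contradiction there. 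Without this ingredient the sliding argument does not close.

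\textbf{Regularization.} At an interior free-boundary touch, you invoke ``Hopf-lemma-type argument\ldots exploiting $|\nabla u|=1$ in the viscosity sense''. The problem is that $u^t$ is only a viscosity solution, so its free boundary need not admit a tangent ball at $\bar x$; you therefore cannot extract the required asymptotic expansion to compare with $w$. The paper fixes this by replacing $u^t$ with its inf/sup-convolutions (Definition~\ref{DefConvolutionsClassical}, Lemma~\ref{PropertyOfConvolutionsClassical}), which produce the touching ball and the one-sided expansion needed for Hopf. The extra shift by a fixed $s>0$ before taking the inf-convolution (equation~\eqref{eq:from3}) is what guarantees the strict gap $\delta>0$ on $\partial D\cap\{u>0\}$ that rules out boundary touches in the positivity set.

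\textbf{Step 1.} Your Phragm\'en--Lindel\"of route is different from the paper's blow-up proof of Lemma~\ref{lem:monotone}, and in the classical (linear-growth) regime it is genuinely delicate: a harmonic function on an epigraph with nonnegative boundary data and linear growth need not be nonnegative (e.g.\ $-x_1$ on a half-space). You flag this as a ``secondary obstacle'', but as stated it is a gap; the paper's argument avoids it entirely.
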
 

For the definition of a global minimizer, see Definition \ref{DefMinimizersClassical}.

\begin{rem}See Proposition \ref{prop:cont_graph}
for a localized version of this theorem. \end{rem}

While this theorem is inspired by a similar result for graphical minimal surfaces (or for strictly monotone solutions to semilinear equations), in our case the proof is more delicate. 

Indeed, for a graphical minimal surface, its minimizing property can be established by a standard sliding argument. To be precise, for a function $\varphi$ satisfying the minimal surface equation in $\R^n$, we need to show that its graph, to be denoted by $\Gamma_\varphi$, minimizes the area over surfaces with the same boundary data. Suppose not: we find $B_R\subset\R^n$ and a surface $M$ which matches $\Gamma_\varphi$ along $\partial B_R\times\R$ and has strictly less area.  Without loss of generality, we may assume $M$ is a minimizer of the area with given boundary data.

Now we translate $\Gamma_\varphi$ vertically. With $\Gamma_\varphi\neq M$ in $B_R\times\R$, 
there is a critical instant when $\Gamma_\varphi$ lies on one side of $M$ but $\Gamma_\varphi\cap M$ is nonempty. Since the two surfaces are translations of one another along $\partial B_R \times \mathbb R$, the point of intersection can be found in the interior of the domain. This contradicts the strict maximum principle between minimal surfaces. 

To implement a similar strategy in our context, there are several challenges. 

Firstly, our problem involves not only the free boundary but also the solution. To perform the sliding argument, we need to translate a comparison between the free boundaries into a comparison between the associated solutions. This is achieved by showing that the graphicality assumption implies the monotonicity of the solution (See Proposition \ref{prop:cont_graph}). 

Secondly, while graphical minimal surfaces instantly regularize  in the interior of the domain, see \cite{BG72}, a similar property for graphical free boundaries (in fact, for monotone solutions) holds when assuming the minimizing property (in fact under the weaker assumption that the positivity set has some quantitative topology) \cite{DJ11}, which is what we need to prove. This lack of regularity for free boundaries also means the comparison principle is much weaker. Even among minimizers of the Alt--Caffarelli functional, a strict maximum principle has only recently been established in \cite{ESV22}. For viscosity solutions, such a result is not known. We overcome this difficulty by working with sup/inf-convolutions instead of the original solution. 

The last challenge we need to overcome is the `boundary stickiness' pheno\-menon, that is, a large portion of the positive set $\PosS$ of a minimizer `invades' the zero region on the fixed boundary. For instance, suppose that $u$ is a minimizer in the two-dimensional domain $\{(x_1,x_2):x_1\in(-1,1),x_2\in(0,\delta)\}$ with $u=1$ on $\{x_2=\delta\}$, and $u=0$ on the remaining parts of the boundary. By choosing $\delta$ small, it can be shown that $u$ will be positive in the entire domain. When this happens, the free boundary is `stuck' to the fixed boundary in some sense, and the sliding argument described above could fail due to contact points along the fixed boundary. To rule out this possibility, we need precise information about the separation of the free boundary from the fixed boundary. Fortunately for us, this result has recently been obtained by Chang-Lara and Savin \cite{CS19}, allowing us to complete the proof of Theorem \ref{MinimizingClassicalIntro}. 

\vem

With Theorem \ref{MinimizingClassicalIntro} in hand, we can perform a blow-down analysis of the solution $u$ to obtain a minimizing cone $u_\infty$  in $\R^n$. With $n\le n_{\rm local}^*+1$, its free boundary has smooth trace on the sphere $\mathbb{S}^{n-1}$ (here is where we use the restriction on the dimension). Being the limit of graphical solutions, this cone $u_\infty$ is also graphical. A maximum principle type argument, applied to the directional derivatives of $u_\infty$, implies that $u_\infty$ is a half-plane solution. 

This means that our original solution $u$ is `flat at large scales'. An improvement of flatness argument as in \cite{D11} gives the desired flatness of $u$. 
 
\subsection{Application to semilinear equations} %In this subsection let us quickly outline an application of our ``Bernstein Theorem" for the classical one-phase problem, Theorem \ref{thm:main0}. This is a Bernstein type result for solutions to a certain class of semilinear elliptic PDE: \begin{equation}\label{e:semilin} \Delta u = \beta(u),\end{equation} where $\beta \in C_c^\infty[0, \infty)$ is nonnegative and $\beta(0) = 0$ with $\beta'(0) > 0$.  CONTINUE{\color{blue}***let's wait on this until cabre says something***}
%
%
%%%%%%%%%%%%%%%%%%%%%%%%%%%%%%%%%%%%%%%%%%%%%%%%%%%%%%%%%%%%%%%%%%%%%%%%%%%%%%%%%%%%%%%%%%%%%%%%%%%%%%%%%%%%%%%%%%%%%%%%%%%%%%%%%%%%%
%
%
%
%\subsubsection{Semilinear equations} 
De Giorgi conjectured in 1978, \cite{DeG78}, that monotone solutions (critical points) $u$ of the Ginzburg--Landau energy (alternatively,   solutions to the Allen--Cahn equation) 
\[
 \Delta u = -u(1-u^2)\quad\text{in}\quad \R^n,
\]
with $\|u\|_{L^\infty(\R^n)}\le 1$, must have one-dimensional symmetry (alternatively, all level sets must be hyperplanes) in $\R^n$ with $n \le 8$. This is currently known as \emph{De Giorgi's conjecture}. It was proven to hold in a series of papers in dimensions 2 and 3, \cite{GG98, AC00}, that culminated with the remarkable work by Savin \cite{Sav09} for $4\le n\le 8$, where it was shown under the additional assumption
\begin{equation}
\label{eq:additional}
\lim_{x_n\to \pm \infty} u(x', x_n) = \pm 1,
\end{equation}
which ensures that solutions are  minimizers to the corresponding energy.   A counter-example when $n\geq 9$ was constructed in \cite{DKW11}.  The paper \cite{Sav09} also applies to general solutions to semilinear equations $ \Delta u = f(u)$ in $ \R^n$, provided that  $f$ is the derivative of a ``double-well potential" (with wells of the ``same depth''). This established a relation between the study of minimal surfaces (and in particular, entire minimal graphs) and solutions to semilinear equations arising from local minimizers of an energy (for $f$ coming from double-well potentials; in particular, $f$ with zero integral in the range of $u$). 

For other types of semilinear equations (namely, those where $f$ is \emph{similar} to a bump function or a Dirac delta; alternatively, when $f$ has nonzero and finite integral in the range of $u$) the corresponding analogy is not with minimal surfaces, but instead, with the one-phase problem (see \cite{CS05, FR19, AS22}). In particular, under the appropriate scaling of \emph{non-double-well} potential functionals, the corresponding limits are solutions to the one-phase problem, and hence the corresponding zero-level set converges to the free boundary of a one-phase problem. This relation was already observed in \cite{CS05}, and then studied in \cite{FR19} to classify global solutions, and more recently in \cite{AS22} to obtain a classification of global minimizers to semilinear equations with $f$ of \emph{bump type}. 

In analogy with De Giorgi's conjecture, we have 
\begin{pb}
\label{pb:B}
Let $u$ satisfy $ \Delta u = f(u)$ in $\R^n$ for some $f$ of \emph{bump type} and $\partial_{x_n} u > 0$ in $\R^n$. 

If $n \le n_{\rm local}^* +1$, then $u$ is a one-dimensional solution. 
\end{pb}

Here, we say that $f$ is of \emph{bump type} if $f\ge 0$, $f(0) = 0$, $f'(0) > 0$ and $\int_0^\infty f = 1$; these are the types of semilinear equations studied in \cite{FR19, AS22}.

As a consequence of our previous result, and thanks to \cite{AS22}, we prove that Problem~\ref{pb:B} is true under the following additional growth assumption (in analogy with \eqref{eq:additional}):
\begin{equation}
\label{eq:cond_min}
\lim_{x_n\to -\infty} u(x', x_n) = 0\quad\text{and}\quad \lim_{x_n\to +\infty} u(x', x_n) = \infty. 
\end{equation}

  Thus, we have:

\begin{cor}\label{cor:semilinear}
Problem~\ref{pb:B} holds with the additional assumption \eqref{eq:cond_min}. 
\end{cor}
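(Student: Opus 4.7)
The plan is to reduce Corollary~\ref{cor:semilinear} to Theorem~\ref{thm:main0} via the correspondence between bump-type semilinear equations and the classical one-phase problem developed in \cite{CS05, FR19, AS22}. As a first step, I would use the two-sided growth condition \eqref{eq:cond_min}, combined with the monotonicity $\partial_{x_n} u > 0$, to identify $u$ as a global \emph{minimizer} of the semilinear energy associated to $f$. This is the bump-type analogue of how Savin exploits the limits \eqref{eq:additional} in the Allen--Cahn setting, and it is precisely the hypothesis under which \cite{AS22} transports information from semilinear minimizers to Alt--Caffarelli minimizers.

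Next, I would perform the natural rescaling $u_\lambda(x) := \lambda^{-1} u(\lambda x)$ and pass to the limit $\lambda \to \infty$. By the compactness and convergence results of \cite{AS22}, along a subsequence $u_\lambda \to u_\infty$ locally uniformly, and $u_\infty$ is a global minimizer of the Alt--Caffarelli functional \eqref{eq:JOm} in $\R^n$. The monotonicity passes to the limit, yielding $\partial_{x_n} u_\infty \ge 0$, while the two-sided condition \eqref{eq:cond_min} prevents $u_\infty$ from collapsing to either the trivial zero solution or a strictly positive harmonic function. Consequently the contact set $\Lambda(u_\infty)$ is a subgraph $\{x_n \le \varphi(x')\}$ for some continuous function $\varphi : \R^{n-1} \to \R$, the continuity coming from the standard free boundary regularity available for Alt--Caffarelli minimizers in the dimension range $n \le n_{\rm local}^* + 1$.

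Theorem~\ref{thm:main0} then applies and yields $u_\infty(x) = x_n^+$ up to a rotation and translation. In particular, $u$ is asymptotically flat at infinity in the one-phase sense. To promote this blow-down rigidity to one-dimensional symmetry of the original $u$, I would invoke an improvement-of-flatness scheme in the semilinear setting, as developed in \cite{FR19, AS22}: once the blow-down is the half-plane solution, the level sets of $u$ at every sufficiently large scale are uniformly close to parallel hyperplanes; iterating this flatness down and using the strict monotonicity $\partial_{x_n} u > 0$ forces every level set of $u$ to be a hyperplane orthogonal to $\be_n$, which is exactly the one-dimensional symmetry asserted in Problem~\ref{pb:B}.

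The main obstacle I foresee lies in the passage from $u_\infty$ being a one-phase minimizer with monotone positivity set to actually verifying that the associated $\varphi$ is finite everywhere and continuous, so that Theorem~\ref{thm:main0} can be legitimately applied and the degenerate product configuration described in Remark~\ref{rem:assumptions} is ruled out. This should follow by combining the two-sided growth condition \eqref{eq:cond_min} with uniform nondegeneracy and Lipschitz estimates for the rescalings $u_\lambda$, but making the reduction rigorous is the essential technical bridge where the framework of \cite{AS22} does the heavy lifting.
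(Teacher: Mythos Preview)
Your outline is essentially the same strategy the paper uses: establish minimality of $u$ for the semilinear energy from \eqref{eq:cond_min} and the strict monotonicity (the paper cites the foliation argument from \cite{CP18}), rescale and invoke the compactness/convergence machinery of \cite{AS22} to produce a monotone global Alt--Caffarelli minimizer $u_\infty$, classify $u_\infty$ as a half-plane solution, and then feed this back through the improvement-of-flatness results of \cite{AS22} to obtain one-dimensional symmetry of $u$.

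The only substantive difference is in the classification step for $u_\infty$. You propose to apply Theorem~\ref{thm:main0}, which requires the contact set to be the subgraph of a \emph{continuous} function; you then (correctly) flag the verification of this continuity as the main technical obstacle. The paper instead applies Corollary~\ref{cor:main000}, which needs only that $u_\infty$ is a global minimizer in $\mathcal{G}(\be_n)$. Since $u_\infty$ is already known to be a minimizer and monotonicity survives the limit, the hypotheses of Corollary~\ref{cor:main000} are automatic and the continuity issue never arises. In other words, the detour through Theorem~\ref{thm:main0} that you anticipate having to justify is simply unnecessary: the whole point of separating Corollary~\ref{cor:main000} from Theorem~\ref{thm:main0} in the paper is that, once minimality is in hand, one may bypass the continuous-graph assumption entirely and work directly with the weaker graphicality of Definition~\ref{defi.2}.
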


\subsection{The thin regime}
\label{sec:thinreg}
The thin one-phase problem \eqref{eq:visc_sol_s} corresponds to the Euler--Lagrange equation of the  \textit{thin one-phase energy functional}. 
Given a domain $\Omega\subset \R^{n+1}$ that is even in the last variable\footnote{The evenness of the domain, the function and/or the boundary conditions is a natural assumption for this problem which we will make throughout and is shared by most of the literature. We mention here only that it comes out of a connection to a nonlocal free boundary problem in the thin-space $\{x_{n+1} =0\}$ and encourage the reader to look into the introductions of \cite{DR12, CRS10, EKPSS20} for more background and information}, and denoting 
\begin{equation}
\label{DecompOfVariable}
x = (x', y)\in \R^n\times \R,
\end{equation} 
we define:
\begin{equation}
\label{eq:JOm_frac}
\mathcal{J}^{0}_\Omega(v) = \int_\Omega |\nabla v|^2 \, dx + \lambda\mathcal{H}^n\left(\{v > 0\}\cap \{y = 0\}\cap \Omega\right),\quad\text{for}~~v \ge 0,~~v\in H^1(\Omega),
\end{equation}
where $\mathcal{H}^n$ denotes the $n$-dimensional Hausdorff measure, and $\lambda>0$ is a universal constant\footnote{This constant is chosen so that the free boundary condition in \eqref{eq:visc_sol_s} has value $1$ as the right-hand side.}.

This functional was introduced by Caffarelli--Roquejoffre--Sire to address certain phenomena in plasma physics and semi-conductor theory that involve long-range interactions \cite{CRS10}. Since then, the regularity of minimizers of \eqref{eq:JOm_frac} as well as viscosity solutions to \eqref{eq:visc_sol_s} has been studied extensively. See, for instance, \cite{DR12, DS12, DSS14, EKPSS20}.

Just as in the classical case, the classification of homogeneous minimizers/ minimizing cones remains an important open question for the thin one-phase problem. For this problem, the corresponding \textit{half-plane solution} is  
\begin{equation}
\label{HalfSpaceSolutionThin}
u(x) = U(x_n, y) := \frac{1}{\sqrt2}\sqrt{x_n+\sqrt{x_n^2+y^2}}.
\end{equation} 
This is shown to be the only minimizing cone in dimension $2+1$, \cite{DS15b}. If we assume axial-symmetry of the cones, nonflat minimizing cones\footnote{The result in \cite{FR20} rules out stable cones, that is, those cones with nonnegative second variation for \eqref{eq:JOm_frac}.}
can be ruled out in dimensions $n+1\le 6$, \cite{FR20}.

The half-plane solution is expected to be the only minimizing cone in low dimensions. However, it is currently unknown what  the critical dimension is.  
 In this work, we denote it by $n_{\rm thin}^*$, that is,
\begin{equation}
\label{LargestDimensionThin}
n_{\rm thin}^*:=\max\{n: \text{minimizing cones of }\eqref{eq:JOm_frac} \text{ are rotations of }\eqref{HalfSpaceSolutionThin} \text{ in }\R^{n+1}\}.
\end{equation} 

For the classification of entire viscosity solutions, even less is known. To the knowledge of the authors, the only result available is in \cite[Proposition 6.4] {DS15b}. That result states that for a homogeneous viscosity solution $u$, if its contact set~$\Lambda(u)$ (see~\eqref{ContactSetThin}) is the subgraph of a Lipschitz function, then $u$ must be a half-plane solution.  

\vem

In dimensions lower than $n^*_{\rm local}+1$, our main result in the thin case removes the assumption on homogeneity and Lipschitz regularity of the free boundary:
\begin{thm}
\label{thm:mainThin}
Let  $u$ be a viscosity solution to the thin one-phase problem \eqref{eq:visc_sol_s} in $\R^{n+1}$ with 
$$
n\le n^*_{\rm thin}+1.
$$
If its contact set $\CSetT$ is the subgraph of a continuous function on $\{x_{n+1} = 0\}$, then
$$
u=\frac{1}{\sqrt2}\sqrt{x_n+\sqrt{x_n^2+y^2}}
$$
up to a rotation and a translation.
\end{thm}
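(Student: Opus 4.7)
The plan is to mirror, in the thin setting, the two-step strategy outlined for Theorem~\ref{thm:main0}: first upgrade a viscosity solution with graphical contact set to a global minimizer of $\mathcal{J}^0$, and then classify such minimizers via a blow-down plus improvement-of-flatness argument. The graphicality of $\CSetT$ on $\{x_{n+1}=0\}$ is with respect to some direction in $\R^n$, which after a rotation we may take to be $x_n$. I would begin by proving that this geometric condition forces monotonicity: if the contact set lies below the graph of a continuous function $\varphi(x_1,\dots,x_{n-1})$, then, arguing as in Proposition~\ref{prop:cont_graph} (which is announced in the classical setting but whose proof uses only the comparison principle and harmonicity in $\PosS$), we obtain $\partial_{x_n} u \ge 0$ in $\R^{n+1}$.

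With monotonicity in hand, I would establish the thin analogue of Theorem~\ref{MinimizingClassicalIntro}, namely that $u$ is a global minimizer of $\mathcal{J}^0$. The sliding argument proceeds as in the classical case: one localizes to a ball, assumes for contradiction that a competitor $v$ lowers the energy, replaces $v$ by a minimizer, and then translates $u$ vertically in the $x_n$ direction. The critical translation produces a pair (minimizer, monotone viscosity solution) touching at an interior free boundary point; one derives a contradiction from a strict comparison principle. Since for thin one-phase such a strict maximum principle is not in the literature for general viscosity solutions, I would work with sup- and inf-convolutions of $u$ (which remain viscosity sub/supersolutions with better touching properties) and compare them to the extremal minimizer, following the template of \cite{ESV22}. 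The boundary-stickiness issue must again be addressed: one needs to rule out touching at the fixed boundary of the sliding domain. Here I would adapt the estimates of Chang-Lara and Savin \cite{CS19} to the thin setting, which should be feasible because the harmonic extension reduces the relevant computation to a boundary Harnack-type statement in the upper half-space.

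Once $u$ is known to be a minimizer, I would run a blow-down: the sequence $u_R(x):=R^{-1/2}u(Rx)$ is a family of monotone minimizers of $\mathcal{J}^0$, and standard compactness in the thin one-phase theory (cf.~\cite{DR12, DSS14}) yields a homogeneous limit $u_\infty$, a minimizing cone of $\mathcal{J}^0$ in $\R^{n+1}$ with graphical contact set. Using $n\le n^*_{\rm thin}+1$, the trace of its free boundary on $\mathbb{S}^n$ is smooth away from a subset of codimension at least two, which allows one to run a maximum principle argument on the directional derivative $\partial_{x_n} u_\infty$ (which is a nonnegative, $(-\Delta)$-superharmonic function vanishing on the fixed boundary) and conclude $u_\infty$ is, up to rotation, the half-plane solution $U(x_n,y)$. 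This means $u$ is asymptotically flat at infinity in the thin one-phase sense, and the improvement-of-flatness machinery of De Silva--Roquejoffre \cite{DR12} (or \cite{DS15b}) then propagates the flatness down to all scales, yielding $u=U$ up to rotation and translation.

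The hardest step will be the minimization upgrade. The two key technical inputs in the classical proof, namely the strict comparison principle between minimizers \cite{ESV22} and the boundary stickiness estimate \cite{CS19}, both rely on the delicate local regularity theory of the bulk free boundary; in the thin setting the free boundary is a codimension-two object inside $\{y=0\}$, so these tools have to be reconstructed using the harmonic extension and the two-dimensional cross-section structure of $U$. In particular, the sup/inf-convolution comparison must preserve the evenness of the competitors in $y$, and the separation estimates near the fixed boundary must quantify not only how $\{u>0\}$ leaves $\partial B_R$, but also how the trace $\{u>0\}\cap\{y=0\}$ leaves its codimension-two trace. Overcoming this is the main technical hurdle; once it is done, the classification follows by essentially the same blow-down plus flatness scheme used in the classical regime.
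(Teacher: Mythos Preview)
Your overall architecture matches the paper's: monotonicity from graphicality, then a sliding argument with sup/inf-convolutions to upgrade to minimality, then blow-down to a cone, classification of the cone in low dimensions, and finally improvement of flatness. Two points deserve comment.

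First, a small misattribution: the step ``graphical $\Rightarrow$ monotone'' is not Proposition~\ref{prop:cont_graph} (that is the minimization result), and in the thin case the paper does \emph{not} rerun the classical compactness argument of Lemma~\ref{lem:monotone}. Instead, Proposition~\ref{thm:monotone_frac} proves monotonicity via an iterated boundary Harnack inequality for slit domains (De Silva--Savin \cite{DS20}), which works in greater generality because it does not use the free boundary condition at all. There is also a short alternative (Remark~\ref{rem:alt_proof}) exploiting the $1/2$-scaling peculiar to the thin problem.

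Second, and more substantively, your plan for the boundary-stickiness obstruction is to ``adapt the estimates of Chang-Lara and Savin \cite{CS19} to the thin setting.'' The paper does \emph{not} do this, and it is not clear a direct adaptation would go through: \cite{CS19} yields $C^1$ regularity of the free boundary up to the fixed boundary, and reproducing that for a codimension-two free boundary inside $\{y=0\}$ is genuinely harder. The paper instead sidesteps the need for boundary regularity altogether. It proves a new result, Theorem~\ref{BoundaryRegularityIntro} (Section~\ref{sec:8}), giving (i) optimal $C^{1/2}$ regularity of the minimizer $w$ up to the fixed boundary and (ii) a nondegeneracy lower bound $\sup_{B_r(\bar x)} w \ge c r^{1/2}$ at fixed-boundary free boundary points. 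In the sliding argument (Lemma~\ref{lem:minbelow}), a putative contact point $\bar x$ on the fixed boundary is then ruled out by a growth mismatch: the nondegeneracy gives $w \gtrsim r^{1/2}$, while a barrier built from the $C^{1/2}$ data forces $w \lesssim r^{3/4}$ near $\bar x$, a contradiction for small $r$. This is both conceptually simpler and technically more robust than porting the full \cite{CS19} machinery to the thin setting.
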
 

Similar to the classical case, it remains to be seen what the sharp assumption on the regularity of the free boundary is, see Remark \ref{rem:assumptions}.

The key ingredient in  the proof of Theorem \ref{thm:mainThin} is again the variational structure provided by the graphicality assumption, namely, 
\begin{thm}
\label{MinimizingThinIntro}
Let  $u$ be a viscosity solution to the thin one-phase problem \eqref{eq:visc_sol_s} in $\R^{n+1}$ whose contact set $\CSetT$ is the subgraph of a continuous function on $\{x_{n+1} = 0\}$.

Then $u$ is a global  minimizer of  the thin one-phase energy \eqref{eq:JOm_frac}.
\end{thm}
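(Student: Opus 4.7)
The plan is to follow the strategy outlined for the proof of Theorem \ref{MinimizingClassicalIntro}, adapted to the thin setting. The argument proceeds by contradiction: suppose $u$ is not a global minimizer, so that there exist a ball $B_R\subset \R^{n+1}$ (even in $y$) and a minimizer $w$ of $\mathcal{J}^{0}_{B_R}$ with $w=u$ on $\partial B_R$ and $\mathcal{J}^{0}_{B_R}(w)<\mathcal{J}^{0}_{B_R}(u)$. The goal is to slide $u$ against $w$ and reach a contradiction via a strict maximum principle across the thin free boundary.

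First, I would establish a thin analogue of Proposition \ref{prop:cont_graph}, showing that the graphicality of $\CSetT$ (say $\CSetT=\{x_n\le \varphi(x_1,\dots,x_{n-1})\}\cap\{y=0\}$ after a rotation on the thin space) forces $u$ to be monotone nondecreasing in $x_n$. This is obtained by the usual sliding comparison: translating $u$ by $te_n$ for $t>0$ strictly shrinks $\CSetT$, and an application of the viscosity comparison principle for the thin problem (in the spirit of \cite{DS12, DSS14, EKPSS20}) yields $u(\cdot+te_n)\ge u$. Letting $t\downarrow 0$ gives $\partial_{x_n}u\ge 0$ in $\PosS$.

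Next, using this monotonicity, I would slide $u_t(x):=u(x+te_n)$ against the competing minimizer $w$. For $t$ large enough, $u_t>0$ on $\overline{B_R}$ and dominates $w$ there. Decreasing $t$ to the critical value $t_0\ge 0$ at which the inequality first fails, one obtains a first touching point $p\in \overline{B_R}$. Since $u_t>u$ on $\partial B_R$ for $t>0$ by strict monotonicity, for $t_0>0$ the point $p$ must lie in the interior. The strong maximum principle for harmonic functions rules out a contact in the common positivity set, so $p$ is a free boundary point, and a Hopf-type comparison of the half-normal derivatives $\partial^{1/2}_\nu u_{t_0}$ and $\partial^{1/2}_\nu w$ at $p$ yields the contradiction. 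Sliding in the opposite direction gives the reverse inequality $u\le w$, hence $u=w$, contradicting the strict energy gap.

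The strict touching principle on the free boundary is not directly available for arbitrary viscosity solutions to \eqref{eq:visc_sol_s}, so the above sliding has to be carried out on the sup-convolution $u^\eps$ and the inf-convolution $w_\eps$, which are semi-concave/convex, remain viscosity super- and subsolutions to the thin problem, and admit the required strict comparison across the thin free boundary; sending $\eps\downarrow 0$ then recovers the desired inequality between $u$ and $w$. The last technical issue is the thin analogue of the boundary stickiness phenomenon, namely ruling out that the first contact between $u_{t_0}$ and $w$ occurs on $\partial B_R\cap\{y=0\}$. I expect this to be the main obstacle: the classical separation estimate of Chang-Lara--Savin \cite{CS19} needs a thin counterpart, and the regularity theory near the fixed boundary for \eqref{eq:visc_sol_s} is less developed due to the lower-dimensional location of $\CSetT$ and the half-derivative condition. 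If no direct reference is available, I would establish the needed separation by a compactness and blow-up argument, using the classification of minimizing cones in dimension $2+1$ from \cite{DS15b} to exclude degenerate limits.
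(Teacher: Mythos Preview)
Your outline has the right architecture --- monotonicity from graphicality, then a sliding argument with sup/inf-convolutions, with boundary stickiness as the crux --- but two of the three steps are not carried by the arguments you sketch.

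\textbf{Monotonicity.} You write that ``an application of the viscosity comparison principle for the thin problem \dots\ yields $u(\cdot+te_n)\ge u$''. There is no off-the-shelf global comparison principle between two viscosity solutions of \eqref{eq:visc_sol_s} in $\R^{n+1}$; this is precisely the conclusion you are trying to reach, and the references you cite do not supply it. The paper proves monotonicity by a completely different route: either an iterated boundary Harnack inequality in slit domains (Proposition~\ref{thm:monotone_frac}), which upgrades $u_\tau\le Cu$ to $u_\tau\le u$ by a recursion on the Harnack constant, or a direct estimate (Remark~\ref{rem:alt_proof}) showing $(\partial_\be u)^-$ is subharmonic and decays like $R^{-1/2}$ via Caccioppoli and the global $C^{1/2}$ bound. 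Your sliding-of-$u$-against-itself argument would need one of these as input.

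\textbf{Boundary stickiness.} You correctly flag this as the main obstacle, but your proposed fix --- blow up and invoke the $2{+}1$ classification from \cite{DS15b} --- does not apply: the contact is at a point of $\partial D\cap\{y=0\}$, so any blow-up lives in a \emph{half}-space $\{x_1\ge0\}$ with a fixed boundary and a free boundary interacting, a situation not covered by the interior cone classification. The paper's actual resolution is quite different and is its main new technical ingredient (Theorem~\ref{BoundaryRegularityIntro}): one proves directly that a minimizer $w$ with $w=0$ on the fixed face satisfies the nondegeneracy $\sup_{B_r(\bar x)}w\ge c r^{1/2}$ at every $\bar x\in\overline{F(w)}$ up to the fixed boundary. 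In the sliding argument (Lemma~\ref{lem:minbelow}), a contact point $\bar x$ on $\partial D'\times\{0\}$ sits where $u=0$ in a neighborhood, so a harmonic barrier forces $w\le C_\eps r^{1-\eps}$ there; combined with the nondegeneracy $w\ge c r^{1/2}$, this is a contradiction for small $r$. The nondegeneracy itself is proved by a De Giorgi--type iteration in the spirit of Berestycki--Caffarelli--Nirenberg (Lemma~\ref{lem:interm} and Theorem~\ref{thm:nondeg2}), not by any cone classification.

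Finally, a smaller point: you assert ``$u_t>u$ on $\partial B_R$ for $t>0$ by strict monotonicity'', but only nonstrict monotonicity is available. The paper handles this by exploiting the continuity of the free boundary to manufacture a uniform $\delta$-gap (see \eqref{eq:from3_s}--\eqref{eq:from4_s}) before passing to the inf-convolution.
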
 
See Definition \ref{DefMinimizersthin} for the definition of  a global minimizer.
\begin{rem}
See Proposition \ref{prop:cont_graph_s} for a localized version of this result. 
\end{rem} 
\begin{rem}
See also \cite{CEF22}, where the authors prove, by constructing a new nonlocal calibration functional, that strictly monotone (bounded) solutions to semilinear nonlocal equations are minimizers of the corresponding functional.
 \end{rem}
The challenges we described after Theorem \ref{MinimizingClassicalIntro} are still present for the thin case, and most can be overcome with similar strategies. The issue of `boundary stickiness', however, requires new ideas, as the boundary behavior of minimizers, in the sense of Chang-Lara and Savin \cite{CS19}, has not been studied in the thin case.  We address this in the following theorem, which may be of independent interest:

\begin{thm}
\label{BoundaryRegularityIntro}
Let $w$ be a minimizer of the thin one-phase energy \eqref{eq:JOm_frac} in 
$$
\Omega=B_1\cap\{x_1\ge0\}\subset\R^{n+1}
$$
with 
$$
w=\psi \text{ on $B_1\cap\{x_1=0\}$}.
$$

If we assume that 
$$
\psi\in C^{1/2}(\{x_1=0\}) \text{ and }\psi=0 \text{ on $\{y=0\}$,}
$$
then we have 
$w\in C^{1/2}(B_{1/2}\cap\{x_1\ge0\})$ 
with
\[
\|w\|_{C^{1/2}(B_{1/2}\cap \{x_1\ge 0\})}\le C\left(\|\psi\|_{C^{1/2}(B_1\cap \{x_1 = 0\})}+\|w\|_{L^\infty(B_1\cap \{x_1\ge0\})}+1\right)
\]
for some $C$ depending only on $n$. 

If we further assume that 
$$
\psi\le\omega(|y|)|y|^{1/2} \text{ on $\{x_1=0\}$}
$$
for some modulus of continuity $\omega$, 
then for each $x\in B_{1/2}\cap\{y=0\}\cap\overline{\{w>0\}}$ and $r\in(0,1/2)$, 
we have
$$
\sup_{B_r(x)\cap\{x_1\ge0\}}w\ge cr^{1/2}
$$
for some $c$ depending only on $n$ and $\omega$.
\end{thm}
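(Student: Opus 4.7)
The plan is to treat the $C^{1/2}$ regularity and the boundary nondegeneracy separately, combining explicit barrier constructions with the known interior theory for minimizers of $\mathcal{J}^0$.

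For the $C^{1/2}$ estimate I split $B_{1/2}\cap\{x_1\ge 0\}$ into three regions. At interior points the estimate reduces to the standard interior regularity for minimizers of the thin one-phase functional (see, e.g., \cite{DR12, EKPSS20}), the universal ``$+1$'' absorbing the contribution of the free boundary condition $\partial^{1/2}_\nu w=1$. At points $x_0\in\{x_1=0\}$ with $y(x_0)\neq 0$, in a ball of radius $\sim |y(x_0)|/2$ the function $w$ is harmonic with $C^{1/2}$ Dirichlet data $\psi$ on $\{x_1=0\}$ and does not meet the thin set, so classical boundary Schauder applies.

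The delicate case is at edge points $x_0\in\{x_1=0\}\cap\{y=0\}$. Here I would work in $B_{1/2}(x_0)\cap\{x_1\ge 0\}$ and construct the explicit supersolution
\[
\bar w(x) = \alpha\,\tilde U(x_1,y) + \beta\,\Phi(x),
\]
where $\tilde U(x_1,y)=\tfrac{1}{\sqrt 2}\sqrt{x_1+\sqrt{x_1^2+y^2}}$ is the half-plane solution rotated so its free boundary lies along $\{x_1\le 0,\,y=0\}$---hence $\tilde U$ is harmonic and strictly positive in $\{x_1>0\}$ with boundary trace $\tfrac{1}{\sqrt 2}|y|^{1/2}$ on $\{x_1=0\}$---and $\Phi$ is the harmonic function in $B_{1/2}(x_0)\cap\{x_1>0\}$ with zero Dirichlet data on $\{x_1=0\}$ and value one on $\partial B_{1/2}(x_0)\cap\{x_1>0\}$. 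Since $\psi(x',0)=0$ and $\psi\in C^{1/2}$ give $\psi(x',y)\le\|\psi\|_{C^{1/2}}|y|^{1/2}$, choosing $\alpha\sim\|\psi\|_{C^{1/2}}$ ensures $\bar w\ge\psi$ on $\{x_1=0\}$, while $\beta\sim\|w\|_{L^\infty}+1$ handles the outer spherical boundary. Because $\bar w>0$ and harmonic in $\{x_1>0\}$, it is a supersolution of \eqref{eq:visc_sol_s} with no free-boundary constraint to verify, and the comparison principle for viscosity solutions (which applies to minimizers) yields $w\le\bar w$. Using $\tilde U(x_1,y)\le(x_1^2+y^2)^{1/4}\le|x-x_0|^{1/2}$ together with the standard boundary Lipschitz estimate $\Phi(x)\le C\,x_1\le C|x-x_0|$, we obtain $w(x)\le C\bigl(\|\psi\|_{C^{1/2}}+\|w\|_{L^\infty}+1\bigr)|x-x_0|^{1/2}$, which together with the previous cases yields the stated bound.

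For the boundary nondegeneracy, the interior case $x_0\in\{x_1>0\}\cap\{y=0\}\cap\overline{\{w>0\}}$ is standard for minimizers of $\mathcal{J}^0$. The new content is at boundary points $x_0\in\{x_1=0\}\cap\{y=0\}\cap\overline{\{w>0\}}$. Argue by contradiction: assume $\sup_{B_r(x_0)\cap\{x_1\ge 0\}}w<\eta r^{1/2}$ for a small $\eta>0$ to be chosen. The growth assumption $\psi\le\omega(|y|)|y|^{1/2}$ yields $\psi\le\omega(r)r^{1/2}$ on $B_r(x_0)\cap\{x_1=0\}$, negligible compared to $r^{1/2}$ as $r\to 0$. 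I would then construct a competitor $\tilde w$ agreeing with $w$ outside $B_r(x_0)\cap\{x_1\ge 0\}$, preserving the Dirichlet data $\psi$ on $\{x_1=0\}$, and forced to vanish on a set of positive $\mathcal{H}^n$-measure in $B_{r/2}(x_0)\cap\{y=0\}$ (by harmonic replacement with an added zero-boundary condition on a ball in the thin set). A Caccioppoli-type estimate bounds the Dirichlet cost by $O(\eta^2 r^n)$, while the measure saving is at least $c r^n$; choosing $\eta$ small (depending on $\omega$ and $n$) contradicts the minimality of $w$.

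The main obstacle is the edge case of the regularity proof: combining $\tilde U$ (capturing the $|y|^{1/2}$ behavior across the thin direction) with $\Phi$ (carrying the outer boundary values) while verifying the comparison principle at the mixed boundary geometry is tractable only because $\bar w$ is strictly positive in $\{x_1>0\}$, voiding the free-boundary comparison. For the nondegeneracy, the subtlety is designing a competitor that saves $\mathcal{H}^n$-measure on the thin set without spoiling the prescribed Dirichlet data $\psi$ on $\{x_1=0\}$; the stronger growth assumption $\psi\le\omega(|y|)|y|^{1/2}$, rather than merely $C|y|^{1/2}$, is exactly what ensures the boundary data is negligible at small scales and therefore does not obstruct the construction.
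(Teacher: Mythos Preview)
Your approach to the $C^{1/2}$ estimate is essentially the paper's: both use that minimizers are subharmonic and compare with a harmonic barrier dominating the data on $\{x_1=0\}$ and on the outer sphere. The paper solves a Dirichlet problem with data $|y|^{1/2}$ on $\{x_1=0\}$ and invokes boundary H\"older regularity, while you write the barrier explicitly as $\alpha\tilde U+\beta\Phi$; these are equivalent, and your version is arguably cleaner. The paper then spells out the case analysis on $\{y=0\}$ (comparing $\mathrm{dist}(z,\Gamma)$ with $z_1$) and the passage from the thin space to the full half-ball, which your sketch compresses into ``together with the previous cases yields the stated bound''; that part is routine.

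The nondegeneracy argument, however, has a genuine gap. Your contradiction scheme asserts a measure saving of order $cr^n$ after forcing the competitor to vanish on a thin ball inside $B_{r/2}(x_0)\cap\{y=0\}$. But the saving is exactly $\mathcal{H}^n\bigl(\{w>0\}\cap(\text{that thin ball})\bigr)$, and at a point $x_0\in\overline{\{w>0\}}\cap\{y=0\}$ you have no a priori lower density bound for $\{w>0\}$; for minimizers, positive density of the positivity set at free boundary points is normally \emph{derived from} nondegeneracy together with the $C^{1/2}$ bound, so invoking it here is circular. Even the ``interior case $x_0\in\{x_1>0\}$'' is not entirely standard once $r\ge (x_0)_1$, since the known interior nondegeneracy only applies in balls contained in $\{x_1>0\}$. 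A secondary issue is the Dirichlet cost: the usual $O(\eta^2 r^n)$ bound for $v=\min\{w,\eta r^{1/2}\varphi\}$ uses Harnack to pin $w\sim\eta r^{1/2}$ on an annulus, which holds at interior positivity points but not at free boundary points, and your harmonic-replacement variant does not obviously satisfy such a bound while simultaneously matching $\psi$ on $\{x_1=0\}$.

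The paper avoids all of this with a different mechanism. It first proves, by a blow-up/compactness argument, a growth lemma: if $0\in F(u)$, $B'_r(z)\subset\{u>0\}$ with $|z|=r$, and the $C^{1/2}$ seminorm is controlled, then $\sup_{B'_{Mr}}u\ge(1+\delta_\circ)u(z)$ for dimensional $M,\delta_\circ$ (the limit would otherwise have an interior maximum, contradicting the maximum principle for $(-\Delta)^{1/2}$). It then iterates this \`a la Berestycki--Caffarelli--Nirenberg: starting from any positivity point in $B_{r/10}(x_0)$, jump from $z_k$ to $z_{k+1}\in B'_{Mr_k}(\bar z_k)$ with $u(z_{k+1})\ge(1+\delta_\circ)u(z_k)$ and $r_k=\mathrm{dist}(z_k,\Gamma)$, accumulating the pointwise lower bound $u(z_k)\ge c\,r_k^{1/2}$ along the path until it exits $B_r$. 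This never needs a density bound for $\{w>0\}$ and treats interior and boundary free boundary points uniformly (translating so that $\{x_1=-\lambda\}$ plays the role of the fixed boundary). Your modulus assumption $\psi\le\omega(|y|)|y|^{1/2}$ enters in the compactness step, ensuring the blow-up limit vanishes on the fixed boundary.
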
 
Recall our convention for the coordinate system in $\R^{n+1}$ from \eqref{DecompOfVariable}.

With Theorem \ref{BoundaryRegularityIntro}, we establish Theorem \ref{MinimizingThinIntro}, which allows us to use tools based on the variational structure of the problem (monotonicity formula and nondegeneracy, etc). This reduces the problem to the study of homogeneous minimizers, and finally gives our classification of graphical viscosity solutions in low dimensions as in Theorem \ref{thm:mainThin}. 
%{\color{blue}***cite last paper cabre-erneta-felipenavarro somewhere}
%%%%%%%%%%%%%%%%%%%%%%%%%%%%%%%%%%%%%%%%%%%%%%%%%%%%%%%%%%%%%%%%%%%%%%%%%%%%%%%%%%%%%%%%%%%%%%%%%%%%%%%%%%%%%%%%%%%%%%%%%%%%%%%%%%%%
\subsection{Structure of the paper}
In Sections \ref{sec:2} to \ref{sec:4}, we study the classical one-phase problem \eqref{eq:viscosity_sol}. 
In Section \ref{sec:2}, we recall some preliminary results and introduce some notations. In Section \ref{sec:3}, we show that graphical solutions are minimizers as stated in Theorem \ref{MinimizingClassicalIntro}. In Section \ref{sec:4}, we complete the classification of graphical solutions in low dimensions and prove Theorem \ref{thm:main0}.

We deal with the thin one-phase problem \eqref{eq:visc_sol_s} in Sections \ref{sec:5} to \ref{sec:8}. Our structure parallels the classical treatment. Section~\ref{sec:5} is devoted to some preliminaries and notations. In Section~\ref{sec:6}, we show that monotone solutions are minimizers, as stated in Theorem \ref{MinimizingThinIntro}, assuming Theorem~\ref{BoundaryRegularityIntro}.  Section~\ref{sec:7} is devoted to the blow-down analysis and the classification of graphical minimizing solutions in low dimensions. Finally, in Section~\ref{sec:8}, we prove Theorem~\ref{BoundaryRegularityIntro}.
%%%%%%%%%%%%%%%%%%%%%%%%%%%%%%%%%%%%%%%%%%%%%%%%%%%%%%%%%%%%%%%%%%%%%%%%%%%%%%%%%%%%%%%%%%%%%%%%%%%%%%%%%%%%%%%%%%%%%%%%%%%%%%%%%%%%
\addtocontents{toc}{\protect\setcounter{tocdepth}{0}}
\section*{Acknowledgements}
This paper was finished while the first and third authors were in residence at Institut Mittag-Leffler for the program on ``Geometric Aspects of Nonlinear Partial Differential Equations". They thank the institute for its hospitality. 

The authors would also like to thank Yash Jhaveri for fruitful discussions on the topics of this paper.  
\addtocontents{toc}{\protect\setcounter{tocdepth}{1}}

\section{Preliminaries and notations: the classical regime}
\label{sec:2}

In this section, we collect some preliminary facts about solutions to the classical one-phase problem \eqref{eq:viscosity_sol}. 

 We begin with the   definition of  viscosity solutions to \eqref{eq:viscosity_sol} as in Caffarelli--Salsa \cite{CS05} (cf. also with \cite{Caf89}). To do that, we first introduce \emph{comparison solutions}, that will work as test functions:
\begin{defi}
\label{defi:comp_sol}
Let $u\in C(\Omega)$ for some domain $\Omega\subset \R^n$, $u \ge 0$ in $\Omega$. 
\begin{enumerate}[leftmargin=*, label=(\roman*)]
\item We say that $u$ is a (strict) \emph{comparison subsolution} to the classical one-phase problem \eqref{eq:viscosity_sol} if \[
u\in C^2(\PosS),\qquad \Delta u \ge 0\quad\text{ in }\quad \PosS,\]
  the free boundary $\partial\PosS$ is a $C^2$ manifold, and for any $x_\circ \in \partial\PosS$ we have 
  \[
  u_\nu(x_\circ) := \nu \cdot \nabla u(x_\circ)> 1,
  \] where $\nu\in \mathbb{S}^{n-1}$ is the inward normal to $\partial\PosS$ at $x_\circ$, $\nu = \frac{\nabla u}{|\nabla u|}(x_\circ)$. 
  
  \item We say that $u$ is a (strict) \emph{comparison supersolution} to the classical one-phase problem \eqref{eq:viscosity_sol} if \[
u\in C^2(\PosS),\qquad \Delta u \le 0\quad\text{ in }\quad \PosS,\]
  the free boundary $\partial\PosS$ is a $C^2$ manifold. and  for any $x_\circ \in \partial\PosS$  we have 
  \[
  u_\nu(x_\circ) < 1,
  \] where $\nu\in \mathbb{S}^{n-1}$ is the inward normal to $\partial\PosS$ at $x_\circ$, $\nu = \frac{\nabla u}{|\nabla u|}(x_\circ)$. 
\end{enumerate}
\end{defi}

By means of the previous definition, we can introduce the notion of a \emph{viscosity solution}: 
\begin{defi}
\label{DefViscositySolutionsClassical}
Let $u\in C(\Omega)$ for some domain $\Omega\subset \R^n$, $u \ge 0$ in $\Omega$. We say that $u$ is a \emph{viscosity solution} to the classical one-phase problem \eqref{eq:viscosity_sol} if  
$$
\Delta u = 0\quad \text{ in }\quad \{u>0\}\cap\Omega,
$$ 
and any strict comparison subsolution (resp. supersolution) cannot touch $u$ from below (resp. from above) at a free boundary point $x_\circ\in \partial\PosS$. 
\end{defi} 

In the previous definition, we say that a strict comparison subsolution $v$ touches from below $u$ at a free boundary point $x_\circ\in \partial\PosS$ if $x_\circ\in \partial\{v > 0\}$ and $v \le u$ in a neighborhood of $x_\circ$.

Unless otherwise specified, solutions should always be understood in the viscosity sense in the remaining part of the paper.  In general, singularities are inevitable on the free boundary of a viscosity solution. To use various comparison principles, it is often necessary to regularize the free boundary first. To this end, sup/inf-convolutions are powerful technical tools.

\begin{defi}
\label{DefConvolutionsClassical}
For a domain $\Omega\subset\R^n$ and $t>0$, define
$$
\Omega_t:=\{x\in\Omega:\mathrm{dist}(x,\Omega^c)>t\}.
$$
For $u\in C(\Omega),$ its \textit{$t$-sup-convolution} is defined as
$$
\overline{u}_t(x):=\sup_{B_t(x)}u \text{ for $x\in\Omega_t$.}
$$
Its \textit{$t$-inf-convolution} is defined as
$$
\underline{u}_t(x):=\inf_{B_t(x)} u \text{ for $x\in\Omega_t$.}
$$
\end{defi} 

The following lemma motivates the use of sup/inf-convolutions. We refer to Section 2.3 of \cite{CS05}.

\begin{lem}
\label{PropertyOfConvolutionsClassical}
Let $u\in C(\Omega)$ be a viscosity solution to the classical one-phase problem \eqref{eq:viscosity_sol} in $\Omega$. For $t>0$, let $\overline{u}_t$ and $\underline{u}_t$ denote its sup-convolution and inf-convolution as in Definition \ref{DefConvolutionsClassical}. Then:

\begin{itemize}
\item $\overline{u}_t$ satisfies $\Delta\overline{u}_t \ge 0$ in $\{\overline{u}_t > 0\}\cap \Omega_t$ and, for each $x_\circ\in \partial \{\overline{u}_t > 0\}\cap \Omega_t$, there is a point $p$ such that 
\[
B_t(p) \subset \{\overline{u}_t > 0\} \quad\text{and}\quad x_\circ \in \partial B_t(p),
\]
and 
\[
\overline{u}_t(x) \ge \langle x-x_\circ, \nu\rangle_+ + o(|x-x_\circ|)
\]
near $x_\circ$, where $\nu :=\frac{1}{t}(p-x_\circ)$.
\item $\underline{u}_t$ satisfies $\Delta\underline{u}_t \le 0$ in $\{\underline{u}_t > 0\}\cap \Omega_t$ and, for each $x_\circ\in \partial \{\underline{u}_t > 0\}\cap \Omega_t$, there is a point $p$ such that 
\[
B_t(p) \subset \{\underline{u}_t = 0\} \quad\text{and}\quad x_\circ \in \partial B_t(p),
\]
and 
\[
\underline{u}_t(x) \le \langle x-x_\circ, \nu\rangle_+ + o(|x-x_\circ|)
\]
near $x_\circ$, where $\nu :=\frac{1}{t}(x_\circ-p)$.
\end{itemize}
\end{lem}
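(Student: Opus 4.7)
The plan is to verify each of the three assertions for $\overline{u}_t$, as the dual statements for $\underline{u}_t$ follow from symmetric arguments with the roles of sub- and supersolutions reversed. All three are standard and follow the blueprint of Section 2.3 of \cite{CS05}. The main obstacle is the one-sided linear expansion at the free boundary, which requires passing the viscosity definition at a boundary point $p$ of $\{u>0\}$ through the sup-convolution.

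For the subharmonicity of $\overline{u}_t$ on $\{\overline{u}_t>0\}\cap\Omega_t$, I would work in the viscosity sense. Suppose $\overline{u}_t(x)>0$, and let $y^*\in\overline{B_t(x)}$ realize the supremum, so that $u(y^*)=\overline{u}_t(x)>0$ and $u$ is harmonic in a neighborhood of $y^*$. If a $C^2$ test function $\phi$ touches $\overline{u}_t$ from above at $x$, then its translate $\tilde{\phi}(y):=\phi(y-h^*)$ with $h^*:=y^*-x$ touches $u$ from above at $y^*$, since $\tilde{\phi}(y)=\phi(y-h^*)\ge\overline{u}_t(y-h^*)\ge u(y)$ with equality at $y^*$. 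The subsolution property of the harmonic function $u$ at $y^*$ gives $\Delta\tilde{\phi}(y^*)=\Delta\phi(x)\ge 0$, proving subharmonicity of $\overline{u}_t$. The superharmonicity of $\underline{u}_t$ on its positive set is the same argument with touching from below at a minimizer $y^*$ where $u$ is again harmonic.

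For the geometric ball property, fix $x_\circ\in\partial\{\overline{u}_t>0\}\cap\Omega_t$. Since $\overline{u}_t(x_\circ)=0$ and $u\ge 0$, we have $u\equiv 0$ on $\overline{B_t(x_\circ)}$, while nearby points $x_n\to x_\circ$ produce $y_n\in B_t(x_n)$ with $u(y_n)>0$. The $y_n$ must sit outside $\overline{B_t(x_\circ)}$, and up to a subsequence they converge to $p$ with $|p-x_\circ|=t$ and $p\in\partial\{u>0\}$. Setting $\nu:=\tfrac{1}{t}(p-x_\circ)$, a direct continuity check shows $B_t(p)\subset\{\overline{u}_t>0\}$: for any $y\in B_t(p)$ and any sequence $p_k\in\{u>0\}$ with $p_k\to p$, eventually $p_k\in B_t(y)$, forcing $\overline{u}_t(y)>0$. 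The dual construction for $\underline{u}_t$ yields $q\in\partial B_t(x_\circ)$ with $u(q)=0$; then $q\in\overline{B_t(y)}$ for every $y\in B_t(q)$ immediately gives $B_t(q)\subset\{\underline{u}_t=0\}$.

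The hardest step, and the one I expect to be the main obstacle, is the one-sided linear expansion. With $B_t(x_\circ)\subset\{u=0\}$ tangent to $\partial\{u>0\}$ at $p$, the vector $\nu$ plays the role of the inward normal to $\{u>0\}$ at $p$ in a one-sided sense. The plan is to use the viscosity supersolution property of $u$ at $p$ to extract a linear lower bound $u(y)\ge\langle y-p,\nu\rangle_++o(|y-p|)$ near $p$. This is done by testing against a family of smooth strict supersolutions built from radial harmonic profiles supported outside balls $B_{t+\epsilon}(x_\circ-\epsilon\nu)$, which contain $B_t(x_\circ)$ and are internally tangent to $\partial B_t(x_\circ)$ at $p$; tuning the outer radius and a multiplicative factor so that the free boundary slope of the barrier approaches $1$ from below, the fact that no such strict supersolution can touch $u$ from above at $p$ forces the linear lower bound. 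Once this is in hand, I would transport it to $\overline{u}_t$ at $x_\circ$ by choosing, for each $x$ near $x_\circ$, the test point $y_\delta:=x+(t-\delta)\nu\in B_t(x)$, for which $y_\delta-p=(x-x_\circ)-\delta\nu$, giving
\[
\overline{u}_t(x)\ge u(y_\delta)\ge\langle y_\delta-p,\nu\rangle_++o(|y_\delta-p|);
\]
letting $\delta\to 0^+$ then recovers the stated expansion. The corresponding upper bound for $\underline{u}_t$ is the mirror argument at $q$, using touching from below by strict comparison subsolutions. The technical difficulty is ensuring the error term in the expansion of $u$ at $p$ is uniform enough to survive both limiting operations simultaneously, which requires a careful choice of the barrier family and bookkeeping of the implicit small parameters.
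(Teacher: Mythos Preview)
The paper does not give its own proof of this lemma; it simply states the result and refers the reader to Section~2.3 of \cite{CS05}. Your proposal correctly follows exactly that blueprint, with a sound sketch of the subharmonicity via translated test functions, the tangent-ball construction at $x_\circ$, and the transport of the linear asymptotic from $u$ at $p$ to $\overline{u}_t$ at $x_\circ$. One small terminological slip: in the expansion step you say you use the ``viscosity supersolution property of $u$'' but then (correctly) describe ruling out strict comparison \emph{supersolutions} touching from above---that is the \emph{subsolution} side of Definition~\ref{DefViscositySolutionsClassical}. The argument itself is fine.
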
 

We now turn to some well known results regarding the regularity of viscosity solutions. First we recall that, viscosity solutions in the entire space $\R^n$ have a dimensional gradient bound:
\begin{lem}\label{lem:visclip}
Let $u$ be a viscosity solution in $\R^n$ to the classical one-phase problem. Then, there is a dimensional constant $C$ such that 
$$
|\nabla u|\le C \text{ in $\R^n$.}
$$
\end{lem}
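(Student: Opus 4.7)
The plan is to split into two cases depending on whether the contact set is empty. If $\{u=0\}=\emptyset$, then $u$ is a nonnegative harmonic function on all of $\R^n$, and Liouville's theorem forces $u$ to be constant, so $|\nabla u|\equiv 0$.

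Assume otherwise. Fix $x_0\in\{u>0\}$ and set $d:={\rm dist}(x_0,\{u=0\})$, with $y_0$ a nearest free boundary point. Since $u$ is harmonic and positive in $B_d(x_0)\subset\{u>0\}$, the interior gradient estimate for harmonic functions combined with Harnack's inequality on $B_{d/2}(x_0)$ gives
\[
|\nabla u(x_0)|\le \frac{C_n}{d}\,u(x_0),
\]
and the problem reduces to establishing the linear growth estimate $u(x_0)\le C_n\,d$.

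For this last bound I would run a sliding/comparison argument with half-plane barriers of the form $\phi_{\alpha,\tau}(x)=\alpha(\langle x-y_0,\nu\rangle-\tau)_+$, where $\nu:=(x_0-y_0)/d$. For $\alpha>1$ these are strict comparison subsolutions in the sense of Definition~\ref{defi:comp_sol}, and by Definition~\ref{DefViscositySolutionsClassical} none of them can touch $u$ from below at a free boundary point of $u$. To turn this qualitative statement into a quantitative growth bound in the absence of free boundary regularity for $u$, I would pass to the sup-convolution $\overline{u}_t$ of Definition~\ref{DefConvolutionsClassical}; by Lemma~\ref{PropertyOfConvolutionsClassical} it enjoys an interior ball condition together with the lower growth bound $\overline{u}_t(x)\ge\langle x-x_\circ,\nu\rangle_++o(|x-x_\circ|)$ near each of its free boundary points, which makes a ``first contact'' analysis rigorous. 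Sliding $\phi_{\alpha,\tau}$ inside a localizing ball around $y_0$ (on whose boundary an \emph{a priori} bound for $u$ in terms of $d$ is obtained via Harnack chains originating in $B_d(x_0)$), the first contact with $\overline{u}_t$ can occur neither at a free boundary point (by the touching principle) nor at an interior point (where the strong maximum principle for harmonic functions would force $u$ to agree with a steeper half-plane on a whole connected component, violating $|\nabla u|=1$). Sending $t\to 0^+$ and then $\alpha\to 1^+$ produces $u(x_0)\le C_n\,d$.

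The most delicate point is the localization step: because half-plane subsolutions are unbounded, they need not lie below $u$ globally in $\R^n$, so the sliding must be carried out in a bounded region in which $u$ is controlled a priori on the boundary. This control comes from chaining Harnack inequalities through $\{u>0\}$ starting from $B_d(x_0)$, where at each step the ball radius scales with the distance to the free boundary; the sup-convolution regularization then converts the qualitative viscosity condition at the free boundary into the quantitative geometric condition on which the sliding argument relies.
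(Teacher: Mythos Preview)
The paper does not prove this lemma; it simply cites \cite[Lemma~11.19]{CS05}. Your high-level strategy---reduce to the linear growth bound $u(x_0)\le C_n d$ and then argue that if $u(x_0)$ were too large one could slide a steep comparison subsolution underneath $u$ until it touches the free boundary, contradicting the viscosity condition---is indeed the standard one. However, the specific implementation you outline has a genuine gap.

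The issue is the localization step. You propose to slide half-plane subsolutions $\phi_{\alpha,\tau}(x)=\alpha(\langle x-y_0,\nu\rangle-\tau)_+$ inside a ball around $y_0$, and to control $u$ on the boundary of that ball ``via Harnack chains originating in $B_d(x_0)$.'' But Harnack chains from $B_d(x_0)$ only give bounds of the form $u\le C_k\,u(x_0)$, i.e.\ in terms of the very quantity $u(x_0)$ you are trying to control; they cannot produce an \emph{a priori} bound in terms of $d$ alone. This is circular. Moreover, half-planes grow linearly to infinity in one direction, so even after localizing you would need $u\ge\phi_{\alpha,\tau}$ on the part of the boundary where $\phi_{\alpha,\tau}$ is largest, and Harnack only gives a \emph{lower} bound $u\ge cM$ on $\partial B_{d/2}(x_0)$, which will not dominate $\phi_{\alpha,\tau}\sim \alpha d$ there unless $\alpha\lesssim M/d$---again circular.

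The fix, and what \cite{CS05} actually does, is to replace the half-plane by a \emph{radial} barrier adapted to the geometry of $B_d(x_0)$. After rescaling so that $d=1$, set $M=u(x_0)$; by Harnack $u\ge c_nM$ on $\partial B_{1/2}(x_0)$. In the annulus $B_1(x_0)\setminus B_{1/2}(x_0)$ take the harmonic function $v$ with $v=c_nM$ on $\partial B_{1/2}(x_0)$ and $v=0$ on $\partial B_1(x_0)$ (a scaled fundamental solution). Then $u\ge v$ in the annulus by the maximum principle, and $|\nabla v|\sim c_n'M$ on $\partial B_1(x_0)$. Extending $v$ by zero outside $B_1(x_0)$ gives a strict comparison subsolution at $y_0\in\partial B_1(x_0)\cap\partial\{u>0\}$ as soon as $M$ exceeds a dimensional constant, contradicting Definition~\ref{DefViscositySolutionsClassical}. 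No sup-convolutions or sliding are needed, because the barrier is built to vanish exactly on $\partial B_d(x_0)$ and to sit below $u$ by construction.
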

For a proof, see, for instance, Lemma 11.19 of \cite{CS05}.

A fundamental tool in the study of the one-phase problem is the following improvement-of-flatness lemma from \cite{D11}. We will use it at large scales to classify entire solutions in low dimensions.
\begin{lem}
\label{LemIOFClassical}
Suppose that $u$ is a solution to the classical one-phase problem \eqref{eq:viscosity_sol} in $B_1$ with $0\in\partial\CSetC$ and
$$
(x_n-\eps)_+\le u\le (x_n+\eps)_+ \text{ in $B_1$.}
$$
There are dimensional constants $\bar \eps$, $r$, and $C$ such that if $\eps<\bar \eps$, then we can find $\be\in\mathbb{S}^{n-1}$ satisfying 
$$
|\be-\be_n|\le C\eps^2,
$$
and 
$$
(x\cdot \be-\eps  r/2)_+\le u(x)\le (x\cdot \be+\eps  r/2)_+ \text{ in $B_{ r}$.}
$$
\end{lem}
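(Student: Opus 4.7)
The plan is to run De Silva's compactness-and-linearization scheme, reducing the improvement of flatness for the one-phase problem to a boundary regularity statement for a linearized Neumann problem. Suppose, for contradiction, that the conclusion fails for all choices of the dimensional constants $\bar\eps, r, C$. Selecting suitable sequences, we obtain $\eps_k\downarrow 0$ and viscosity solutions $u_k$ in $B_1$ satisfying
\[
(x_n-\eps_k)_+\le u_k(x)\le (x_n+\eps_k)_+\text{ in }B_1,\qquad 0\in\partial\{u_k>0\},
\]
for which no vector $\be\in\mathbb{S}^{n-1}$ with $|\be-\be_n|\le C\eps_k^2$ produces the desired two-sided trapping in $B_r$.

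First I would renormalize, setting
\[
\tilde u_k(x):=\frac{u_k(x)-x_n}{\eps_k}
\]
on the positivity set and viewing the graphs $\{(x,\tilde u_k(x))\}$ as small perturbations of the graph over $B_1\cap\{x_n\ge 0\}$ of the zero function. The flatness assumption forces $|\tilde u_k|\le 1$ on the natural domain. The main technical input---and, in my view, the chief obstacle---is a \emph{partial Harnack inequality} for flat viscosity solutions: if $(x_n+a)_+\le u\le (x_n+b)_+$ on $B_\rho$ with $b-a$ small enough, then on $B_{\theta\rho}$ the oscillation $b-a$ can be tightened by a universal factor $1-\mu$ with $\mu\in(0,1)$. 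Its proof is purely local and uses only viscosity comparison, but it requires carefully calibrated one-sided barriers that push one of the two enveloping half-plane solutions uniformly closer to $u$ on interior subballs. Iterating this Harnack step yields uniform H\"older estimates on fixed scales for the graph functions associated to the $\tilde u_k$, and hence precompactness in Hausdorff distance.

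Passing to a subsequence, I obtain a continuous limit $\tilde u_\infty$ on $B_{1/2}\cap\{x_n\ge 0\}$ with $|\tilde u_\infty|\le 1$. Next I would identify the PDE it satisfies. In $\{x_n>0\}$, uniform convergence of harmonic functions gives $\Delta\tilde u_\infty=0$. On $\{x_n=0\}$, the free boundary condition $|\nabla u|=1$ linearizes to the Neumann-type condition $\partial_{x_n}\tilde u_\infty=0$, which I would verify by contradiction: if the condition failed at some boundary point, I would build a strict comparison sub- or supersolution of the form $(x_n+\eps_k(\bv\cdot x+q|x|^2+{\rm const}))_+$ for $u_k$ that touches nearby, contradicting Definition~\ref{DefViscositySolutionsClassical}.

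Standard boundary regularity for the Neumann problem then furnishes $\tilde u_\infty\in C^2$ near the origin with universal bounds. Subtracting $\tilde u_\infty(0)$ (harmless, via a vertical shift absorbed into the flatness) and expanding at $0$ produces a tangential vector $\bv\in\{x_n=0\}$ and a universal $C_0$ with
\[
|\tilde u_\infty(x)-\bv\cdot x|\le C_0|x|^2\text{ in }B_{1/2}.
\]
Fixing $r$ so that $C_0 r^2<r/4$, defining
\[
\be:=\frac{\be_n+\eps_k\bv}{|\be_n+\eps_k\bv|},
\]
and using $\bv\perp\be_n$ gives $|\be-\be_n|\le C\eps_k^2$ for a dimensional $C$. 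Uniform convergence $\tilde u_k\to\tilde u_\infty$ then yields
\[
(x\cdot\be-\tfrac{\eps_k r}{2})_+\le u_k(x)\le (x\cdot\be+\tfrac{\eps_k r}{2})_+\text{ in }B_r
\]
for all $k$ large, contradicting the failure assumption. As indicated, the only genuinely delicate step is the partial Harnack inequality; compactness, identification of the linearized Neumann problem, and the final scaling-back are essentially soft consequences.
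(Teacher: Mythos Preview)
The paper does not prove this lemma; it is stated with a citation to De Silva \cite{D11} and used as a black box. Your sketch is precisely the compactness-and-linearization argument of \cite{D11}, so in that sense you have reproduced the proof the paper invokes.

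One concrete slip: your claim that $|\be-\be_n|\le C\eps_k^2$ follows from $\bv\perp\be_n$ is not correct. With $\be=(\be_n+\eps_k\bv)/|\be_n+\eps_k\bv|$ and $\bv\perp\be_n$, one has $|\be_n+\eps_k\bv|=\sqrt{1+\eps_k^2|\bv|^2}$ and hence
\[
\be-\be_n=\frac{\eps_k\bv}{\sqrt{1+\eps_k^2|\bv|^2}}+\be_n\Bigl(\frac{1}{\sqrt{1+\eps_k^2|\bv|^2}}-1\Bigr),
\]
which gives $|\be-\be_n|=\eps_k|\bv|+O(\eps_k^2)$, i.e.\ order $\eps_k$, not $\eps_k^2$. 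The exponent $2$ in the paper's stated bound appears to be a typo; the standard improvement-of-flatness bound on the rotation of the normal is linear in $\eps$, and that is what your argument actually delivers. Fortunately the paper never uses the quantitative bound on $|\be-\be_n|$---only the trapping conclusion is invoked in the proof of Corollary~\ref{cor:main000}---so nothing downstream is affected.
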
 

%Apart from `flat' free boundaries as in Lemma \ref{LemIOFClassical}, Lipschitz free boundaries are also known to be smooth. The $C^{1,\alpha}$-regularity was pointed out in Theorem 4.1 of \cite{CS05}. Higher regularity can be achieved, for instance, by the higher order boundary Harnack principle in \cite{DS15c}.
%\begin{thm}
%\label{LipschitzFreeBoundarySmoothClassical}
%Suppose that $u$ is a solution to \eqref{eq:viscosity_sol} in $B_1$ with 
%$$
%\CSetC=\{(x',x_n):x_n\le f(x')\}
%$$
%for a Lipschitz function $f$. 

%Then $\partial\CSetC$ is a smooth hypersurface in $B_{1/2}$.

%In particular, if $u$ is homogeneous, then $u$ is a half-plane solution. 
%\end{thm} 

A special class of solutions to the classical one-phase problem \eqref{eq:viscosity_sol} arises as the minimizers of the Alt--Caffarelli functional \eqref{eq:JOm}.
 \begin{defi}
 \label{DefMinimizersClassical}
 For $\Omega\subset\R^n$ and $u\in H^1(\Omega)$,
we say that $u$ is a \emph{minimizer} of the Alt--Caffarelli functional \eqref{eq:JOm} in $\Omega$ if 
$u\ge0$ in $\Omega$, and 
\[
\mathcal{J}_\Omega(u) \le \mathcal{J}_\Omega(v)\qquad\text{for all}\quad v\ge0, \quad v-u\in H^1_0(\Omega). 
\]

For $u\in H^1_{\rm loc}(\R^n)$ with $u\ge 0$, we say that it is a \textit{global minimizer} in $\R^n$ if
it is a minimizer in $B_R$ for every $R>0$.
\end{defi}

Compared with viscosity solutions, minimizers are particularly nice since we can apply variational tools. This allows us to perform the following blow-down argument.
\begin{lem} 
\label{lem:blowdown}
Let $u$ be a global minimizer of the Alt--Caffarelli functional in $\mathbb R^n$.
For a sequence $r_i\uparrow \infty$, define 
$$
u_i(x) := \frac{u(r_i x)}{r_i}.
$$ 
Then, perhaps passing to a subsequence, we can find a nonzero one-homogeneous global minimizer $u_\infty$ such that  
$$
u_i \rightarrow u_\infty \text{ locally uniformly in $\R^n$}
$$ 
with 
$$
\Lambda(u_i) \rightarrow \Lambda(u_\infty)\text{ in $L^1_{\mathrm{loc}}$,} 
$$
and 
$$
\partial\Lambda(u_i) \rightarrow \partial \Lambda(u_\infty) \text{ locally in the Hausdorff distance sense.}
$$ 
\end{lem}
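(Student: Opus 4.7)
The plan is to combine the uniform Lipschitz bound from Lemma \ref{lem:visclip} with standard variational machinery for the Alt--Caffarelli functional (density/nondegeneracy estimates and the Weiss monotonicity formula), applied to the rescaled family $u_i(x) = u(r_i x)/r_i$. First I would observe that each $u_i$ is itself a global minimizer, since the Alt--Caffarelli functional is scale--invariant in the sense that $\mathcal{J}_{B_{R/r_i}}(v(r_i\cdot)/r_i) = r_i^{-n}\mathcal{J}_{B_R}(v)$. By Lemma \ref{lem:visclip}, the family $\{u_i\}$ is uniformly Lipschitz with a dimensional constant, so Arzel\`a--Ascoli yields, up to a subsequence, a locally uniform limit $u_\infty$ which is also nonnegative and Lipschitz.

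Next I would verify that $u_\infty$ is a global minimizer. This uses the standard compactness theory for minimizers of $\mathcal{J}$: upper semicontinuity of $v\mapsto |\{v>0\}|$ under uniform convergence (which holds thanks to interior density estimates $|\{u_i=0\}\cap B_r(x_\circ)|\ge c r^n$ at free boundary points), together with a comparison/truncation argument to test against arbitrary competitors with the same boundary data on $\partial B_R$. Once $u_\infty$ is identified as a minimizer, the convergence of the contact sets in $L^1_{\mathrm{loc}}$ follows from the classical density dichotomy for minimizers of $\mathcal{J}$: $\{u_i>0\}$ and $\{u_i=0\}$ each have uniform positive density at every free boundary point, so their characteristic functions converge in $L^1_{\mathrm{loc}}$ to those of $\{u_\infty>0\}$ and $\{u_\infty=0\}$ respectively. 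The Hausdorff convergence of the free boundaries is then a direct consequence of the same two--sided density estimates: nondegeneracy prevents free boundary points of $u_i$ from accumulating in the interior of $\{u_\infty=0\}$ or $\{u_\infty>0\}$, and conversely any free boundary point of $u_\infty$ is approximated by free boundary points of $u_i$.

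To get one--homogeneity of $u_\infty$, I would invoke the Weiss monotonicity formula
\[
W(r,u) := \frac{1}{r^n}\int_{B_r} |\nabla u|^2 + |\{u>0\}\cap B_r| - \frac{1}{r^{n+1}}\int_{\partial B_r} u^2\, d\mathcal{H}^{n-1},
\]
which is monotone nondecreasing in $r$ along any minimizer and constant in $r$ precisely when $u$ is $1$--homogeneous (with respect to the base point, here the origin). Since $W(r,u)$ is bounded (by $\mathcal{J}$--minimality and the Lipschitz bound), $W(\infty,u) := \lim_{r\to\infty}W(r,u)$ exists, and $W(r,u_i) = W(r r_i, u)$. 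Passing to the limit gives $W(r,u_\infty) \equiv W(\infty,u)$ for all $r>0$, which by the rigidity case of the monotonicity formula forces $u_\infty$ to be $1$--homogeneous about $0$. Nontriviality of $u_\infty$ follows from the nondegeneracy estimate $\sup_{B_{r_i}} u \ge c\, r_i$ (valid for any global minimizer that is not identically zero, with $c$ universal), which gives $\sup_{B_1} u_i \ge c>0$ and is preserved in the limit.

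The step I expect to require the most care is the passage to the limit for the minimizing property, specifically the upper semicontinuity $|\{u_\infty>0\}\cap B_R| \ge \limsup_i |\{u_i>0\}\cap B_R|$ at competitors; this is where one really needs both the density estimates from below in the zero set and a careful handling of the fact that $u_\infty$ may vanish on sets of positive measure only along the free boundary of the limit. All of these tools are classical for minimizers of $\mathcal{J}$ and can be quoted from \cite{AC81} (see also \cite{CS05,Vel19}).
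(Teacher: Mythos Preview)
Your proposal is correct and follows essentially the same approach as the paper: the paper's proof is extremely terse, citing \cite{AC81,DT15,EE} for the compactness/minimizing-limit statement (which packages together exactly the Lipschitz bound, nondegeneracy, and density estimates you spell out), and then invoking the Weiss monotonicity formula \cite{Wei99} together with the bound $W(u,R)\le R^{-n}\int_{B_R}(|\nabla u|^2+\chi_{\{u>0\}})\le C$ from Lemma~\ref{lem:visclip} to get one-homogeneity. Your write-up simply unpacks what is inside those citations.
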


\begin{proof}
The convergence to a nonzero global minimizer follows from the Lipschitz and nondegeneracy estimates for minimizers in \cite{AC81} (see also \cite{DT15}; this is written explicitly in \cite[Theorem 1.3]{EE}). Using the Weiss monotonicity formula and arguing as in \cite{Wei99}, the one-homogeneity of $u_\infty$ follows as long as 
$$
\lim_{r_i\uparrow \infty} W(u,r_i) < \infty,
$$
where $W$ is the Weiss energy functional. 

Towards this end, we note 
$$
W(u, R) \leq \frac{1}{R^n} \int_{B_R} |\nabla u|^2 + \chi_{\{u > 0\}}\, \le C,
$$ 
for a dimensional constant $C$,
where we used the universal gradient bound from Lemma \ref{lem:visclip}.
\end{proof}

Homogeneous minimizers have smooth free boundaries on the sphere in low dimensions. Recall the critical dimension $n_{\rm local}^*$ defined in \eqref{LargestDimensionClassical}.
\begin{lem}
\label{TraceSmoothClassical}
Let $u$ be a homogeneous minimizer in $\R^n$ with 
$$
n\le n_{\rm local}^*+1.
$$
Then $\partial\CSetC\cap\mathbb{S}^{n-1}$ is smooth.
\end{lem}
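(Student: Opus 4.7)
The plan is to pick an arbitrary point $x_\circ \in \partial\Lambda(u)\cap \mathbb{S}^{n-1}$ and show that the free boundary is smooth in a neighborhood of $x_\circ$; since $\partial\Lambda(u)$ is a cone through the origin, it meets $\mathbb{S}^{n-1}$ transversally, so smoothness of $\partial\Lambda(u)$ away from the vertex translates immediately into smoothness of the trace on the sphere.

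To analyze the free boundary near $x_\circ$, I would perform a blow-up. Consider
\[
u_r(x) := \frac{u(x_\circ+rx)}{r}
\]
as $r\downarrow 0$. Since $u$ is a (global) minimizer, so is each $u_r$, and by the Lipschitz bound together with the Weiss monotonicity formula (as in Lemma \ref{lem:blowdown}, adapted to the new center $x_\circ$), a subsequence converges locally uniformly to a $1$-homogeneous minimizer $u_\circ$ centered at the origin. The key observation is that because $u$ is itself $1$-homogeneous about $0$, the tangent cone $u_\circ$ at $x_\circ$ must be translation-invariant along the direction $x_\circ$. Indeed, using $u(\lambda y)=\lambda u(y)$ with $\lambda = 1+r\tau$ and $y = x_\circ + \frac{r}{1+r\tau} x$ yields
\[
u_r(x+\tau x_\circ) \;=\; \frac{u(x_\circ + rx + r\tau x_\circ)}{r} \;=\; (1+r\tau)\, u_{r/(1+r\tau)}(x) \;\longrightarrow\; u_\circ(x),
\]
so $u_\circ(x+\tau x_\circ) = u_\circ(x)$ for every $\tau\in\R$. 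Consequently $u_\circ(x)=w(\pi x)$, where $\pi$ is the orthogonal projection onto $x_\circ^{\perp}\cong \R^{n-1}$ and $w$ is a $1$-homogeneous global minimizer of the Alt--Caffarelli energy in $\R^{n-1}$.

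Because $n-1 \le n_{\rm local}^*$ by hypothesis, the definition of $n_{\rm local}^*$ in \eqref{LargestDimensionClassical} forces $w$ to be a half-plane solution in $\R^{n-1}$, and hence $u_\circ$ is a half-plane solution in $\R^n$. Thus at sufficiently small scales $r>0$ we have the flatness bound
\[
(y\cdot \be - \varepsilon)_+ \le u_r(y) \le (y\cdot \be + \varepsilon)_+ \quad \text{in } B_1,
\]
with $\varepsilon$ as small as we please. De Silva's improvement-of-flatness lemma, stated in Lemma \ref{LemIOFClassical}, then applies and, upon iteration, yields $C^{1,\alpha}$ regularity of $\partial\{u>0\}$ at the point $x_\circ$. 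Standard bootstrap for the one-phase problem (the Kinderlehrer--Nirenberg hodograph argument) upgrades this to analyticity of $\partial\Lambda(u)$ in a neighborhood of $x_\circ$. Since $x_\circ\in\partial\Lambda(u)\cap\mathbb{S}^{n-1}$ was arbitrary and the cone $\partial\Lambda(u)$ is transverse to $\mathbb{S}^{n-1}$ at every point of $\mathbb{S}^{n-1}$, the trace $\partial\Lambda(u)\cap\mathbb{S}^{n-1}$ is smooth.

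The main obstacle I anticipate is the dimension-reduction step: one needs to carefully argue that the blow-up at $x_\circ$ is both a minimizer and $1$-homogeneous with translation symmetry along $x_\circ$. This requires the compactness/convergence theory of minimizers (preservation of the minimizing property under blow-up limits, and convergence of free boundaries in Hausdorff distance as in Lemma \ref{lem:blowdown}), combined with the homogeneity of $u$ to extract the extra radial translation invariance. Once that is in place, invoking the critical dimension $n_{\rm local}^*$ and the improvement-of-flatness lemma is essentially mechanical.
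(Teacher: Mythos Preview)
Your proposal is correct and follows the same dimension-reduction idea as the paper: blow up at a point $x_\circ\in\partial\Lambda(u)\cap\mathbb{S}^{n-1}$, use the $1$-homogeneity of $u$ to obtain translation invariance of the blow-up along $x_\circ$, and then invoke the definition of $n_{\rm local}^*$ in one fewer dimension. The paper phrases this by contradiction (assume $x_\circ$ is singular, so the blow-up restricted to $x_\circ^\perp$ is a nonflat minimizing cone in $\R^{n-1}$, contradicting $n-1\le n_{\rm local}^*$), which lets it skip the explicit appeal to improvement of flatness and bootstrap, whereas you argue directly that every blow-up is a half-plane and then regularize; both routes are standard and equivalent. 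One small slip: your displayed identity should read $u_r(x+\tau x_\circ)=u_{r/(1+r\tau)}(x)$ without the extra factor $(1+r\tau)$ (it cancels when you unwind the definition of $u_{r'}$), but since $(1+r\tau)\to 1$ this does not affect the limit or the conclusion.
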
 

\begin{rem}
This is the only place where we require the restriction of dimensions. 
\end{rem} 

\begin{proof}
Suppose not; then we find a singularity on $\partial\CSetC\cap\mathbb{S}^{n-1}$, say, at point $e_1$. 

Then we perform a blow-up analysis as in \cite{Wei99} and end up with a minimizer $v$, which is independent of the variable $x_1$ and has a line of singularities on the free boundary. 

By restricting $v$ to the variables $(x_2,x_3,\dots, x_n)$, we get a homogeneous minimizer in $\R^{n-1}$ with a singularity at $0$. This contradicts the definition of $n_{\rm local}^*$ as in \eqref{LargestDimensionClassical}. 
\end{proof} 

As mentioned in the introduction, one important tool we use to address the `boundary stickiness' phenomenon is the following theorem on  boundary regularity of minimizers as in \cite{CS19}. See Figure~\ref{fig:0} for a graphical representation of this setting.
\begin{thm}
\label{thm:CS}
Let $\Omega\subset \R^n$ be a domain with $C^2$ boundary, and let $Z \subset \partial\Omega$ be open with respect to the topology of $\partial \Omega$. Let $u:\overline{\Omega}\to [0,\infty)$ be a minimizer of the Alt--Caffarelli functional in $\Omega$ such that 
$$
u = 0 \text{ on $Z$}.
$$
Then $u$ solves (in the viscosity sense), 
\[
\left\{
\begin{array}{rcll}
\Delta u & = & 0& \quad\text{in}\quad \Omega^+:=\PosS\cap\Omega,\\
|\nabla u|& \ge & 1 &\quad\text{on}\quad \partial\Omega^+\cap Z,\\
|\nabla u| & = & 1&\quad\text{on}\quad\partial \Omega^+\cap \Omega. 
\end{array}
\right.
\]
Furthermore, $\partial \Omega^+$ is $C^1$ in a neighborhood of every $x_\circ \in \partial\Omega^+\cap Z$.
\end{thm}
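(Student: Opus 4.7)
The plan is to split the statement into its variational content -- the three viscosity equations -- and its regularity content -- $C^1$ smoothness of $\partial\Omega^+$ near $Z$. I would first dispose of the interior equations: harmonicity of $u$ in $\Omega^+$ follows at once from inner variations compactly supported in $\Omega^+$, and $|\nabla u|=1$ in the viscosity sense on $\partial\Omega^+\cap\Omega$ is exactly the content of the Alt--Caffarelli free boundary condition for minimizers, which survives verbatim since the variations used are supported away from $\partial\Omega$.

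\textbf{The one-sided condition on $Z$.} For the new piece $|\nabla u|\ge 1$ on $\partial\Omega^+\cap Z$, the plan is a one-sided domain variation: since every admissible competitor $v$ must still satisfy $v=0$ on $Z$, the set $\{v>0\}$ cannot extend through $Z$. Only the ``shrinking'' half of the standard Alt--Caffarelli first variation remains admissible near a point $x_\circ\in\partial\Omega^+\cap Z$, giving the inequality $|\nabla u|^2\ge 1$ rather than equality. To put this in viscosity form, if a strict comparison supersolution $\varphi$ (with $\varphi_\nu(x_\circ)<1$) touched $u$ from above at $x_\circ$, then replacing $u$ by $\min(u,\varphi-\delta)_+$ for small $\delta>0$ would yield a competitor with the same boundary values, a strictly smaller positive set, and -- by the gradient gap $\varphi_\nu<1$ -- a strictly smaller energy, contradicting minimality.

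\textbf{$C^1$ regularity near $Z$: the main obstacle.} The heart of the theorem is the $C^1$ regularity of $\partial\Omega^+$ in a neighborhood of $x_\circ\in\partial\Omega^+\cap Z$, and this is where I expect the real work. My approach is a boundary improvement-of-flatness scheme modelled on Lemma \ref{LemIOFClassical} but carried out in the half-space picture adapted to the $C^2$ trace of $\partial\Omega$ at $x_\circ$. The steps would be: (i) upgrade the standard Alt--Caffarelli Lipschitz and nondegeneracy estimates up to $Z$ by constructing explicit harmonic barriers vanishing on $\partial\Omega\cap B_r(x_\circ)$; (ii) perform a simultaneous blow-up of $u$ and flattening of $\partial\Omega$ at $x_\circ$, and use Lemma \ref{lem:blowdown} and Lemma \ref{TraceSmoothClassical}, together with the inequality just proved, to identify every blow-up limit as a half-plane solution $\langle x,\be\rangle_+$ restricted to a half-space, with $\be$ making a strictly positive angle with $\partial\Omega$ (tangential contact is excluded precisely by $|\nabla u|\ge 1$ on $Z$); (iii) establish a \emph{boundary} improvement of flatness: given an initial flatness around $x_\circ$ compatible with the $C^2$ graph of $\partial\Omega$, at a smaller scale the flatness improves with a quantitative rate. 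The main obstacle will be step (iii), whose linearization is a boundary value problem in a half-space with mixed oblique/Dirichlet data on two intersecting hyperplanes, together with the Liouville-type classification for its solutions; once this is in place, iterating produces a unique tangent plane and $C^{1,\alpha}$ -- hence $C^1$ -- regularity of $\partial\Omega^+$ at $x_\circ$.
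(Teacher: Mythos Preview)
The paper does not prove this theorem; it is quoted from Chang-Lara and Savin \cite{CS19} as an input (see the sentence introducing Theorem~\ref{thm:CS}), so there is no in-paper proof to compare your proposal against.

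That said, your overall architecture --- dispose of the interior equations, get the one-sided inequality on $Z$ by a one-sided variation, then run a boundary improvement-of-flatness --- is indeed the scheme of \cite{CS19}. But your step (ii) has a genuine gap: you invoke Lemma~\ref{lem:blowdown} and Lemma~\ref{TraceSmoothClassical} to classify the blow-up limits at $x_\circ\in\partial\Omega^+\cap Z$, and neither applies. Lemma~\ref{lem:blowdown} is a blow-\emph{down} statement at large scales for a minimizer in all of $\R^n$, not a blow-up at a boundary point. Lemma~\ref{TraceSmoothClassical} classifies homogeneous minimizers in the \emph{full} space under the dimensional restriction $n\le n_{\rm local}^*+1$, whereas Theorem~\ref{thm:CS} carries no dimension hypothesis and the blow-up at $x_\circ$ lives in a \emph{half-space} with zero Dirichlet data on its flat face. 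Identifying that half-space blow-up as a half-plane solution, in every dimension, is itself one of the main steps in \cite{CS19} and does not follow from anything in this paper; without it your iteration in (iii) has no initial flatness to improve. Your remark that ``tangential contact is excluded precisely by $|\nabla u|\ge 1$ on $Z$'' is also not quite right: the supersolution inequality alone does not rule out a tangential half-plane limit --- what does is the combination of nondegeneracy up to $Z$ with the classification of half-space minimizers.
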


\begin{figure}[h]
\centering
\includegraphics[scale = 1]{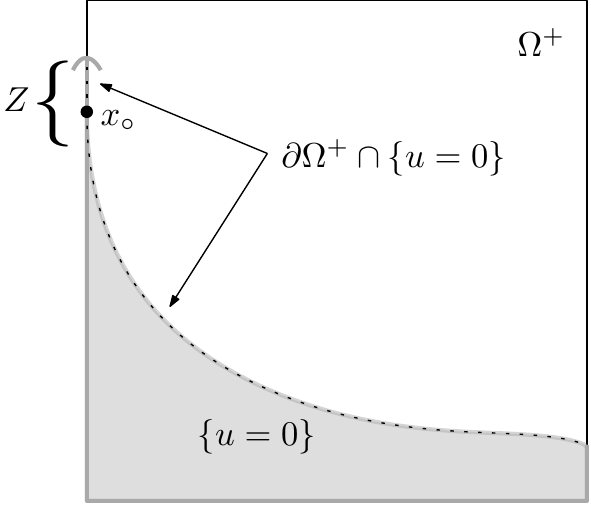}
\caption{Theorem~\ref{thm:CS} says that this is the only way in which the free boundary can detach from the fixed boundary (from the interior of the zero level set on the fixed boundary). In particular, there is always a well-defined normal at $x_\circ$.}.
\label{fig:0}
\end{figure}

%%%%%%%%%%%%%%%%%%%%%%%%%%%%%%%%%%%%%%%%%%%%%%%%%%%%%%%%%%%%%%%%%%%%%%%%%%%%%%%%%%%%%%%%%%%%%%%%%%%%%%%%%%%%%%%%%%%%%%%%%%%%%%%%%%%%
\section{Graphical solutions are minimizers: the classical regime} 
\label{sec:3}

In this section, we introduce the class of solutions we are interested in, namely, viscosity solutions to \eqref{eq:viscosity_sol} with graphical free boundaries. Under the mild assumption that the contact set is the subgraph of a continuous function, we show that solutions in this class are actually minimizers of the Alt--Caffarelli functional \eqref{eq:JOm}. This, in turn, allows us to use the variational structure of the problem. In particular, we consider Proposition \ref{prop:cont_graph} to be our main contribution in the classical setting and of independent interest. 

We begin by formally introducing the class of solutions with graphical free boundaries:
\begin{defi}
\label{defi.2}
Suppose that $u$ is a solution to the classical one-phase problem in $\R^n$ as in Definition \ref{DefViscositySolutionsClassical}, and that $\be\in\mathbb{S}^{n-1}$.

We say that $u$ is a \textit{graphical solution in direction $\be$}, and write
$$
u\in\mathcal{G}(\be),
$$
 if 
 $$\CSetC+\tau\be\supset\CSetC \text{ for all $\tau>0$.}$$ 
\end{defi}
Recall that the contact set $\CSetC$ is defined in \eqref{ContactSetClassical}.

\begin{rem}
This definition gives a very weak notion of graphical free boundaries. Indeed, it says that we can see the free boundary $\partial\{u > 0\}$ as a graph of a ``generalized function" over the hyperplane $\{\be \cdot x = 0\}$; such a function does not need to be defined everywhere; we only require that the intersection of $\partial \{u > 0\}$ with each line perpendicular to the hyperplane $\{\be \cdot x = 0\}$ is connected. 
\end{rem}

By definition, if a solution is monotone in the direction $\be$, then it has graphical free boundaries. We see now that the converse is true. This will be useful in turning geometric comparison of the free boundaries into analytic comparison between the solutions.

\begin{lem}
\label{lem:monotone}
Let $u\in \mathcal{G}(\be_n)$. Then $u$ is monotone nondecreasing in the direction $\be_n$. 
\end{lem}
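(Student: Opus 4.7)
The plan is to fix $\tau > 0$ and show $u_\tau \ge u$ in $\R^n$, where $u_\tau(x) := u(x+\tau\be_n)$. The graphical assumption is equivalent to the upward invariance $\{u>0\} \subset \{u_\tau>0\}$, so on $\{u>0\}$ both $u$ and $u_\tau$ are positive, hence harmonic, and their difference is a bounded harmonic function there (bounded by $C\tau$ via Lemma \ref{lem:visclip}).

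I would argue by contradiction: assume $M := \sup_{\R^n}(u-u_\tau) > 0$ and pick a maximizing sequence $x_k$. Set $u^k(x) := u(x+x_k)$; these are uniformly Lipschitz nonnegative viscosity solutions. The possibility $u(x_k)\to\infty$ can be ruled out by a short argument: the translates $u(\cdot+x_k)-u(x_k)$ would be Lipschitz harmonic on expanding balls (since the distance from $x_k$ to $\partial\{u>0\}$ is at least $u(x_k)/C\to\infty$), converging to a Lipschitz harmonic function on $\R^n$, hence affine $y\mapsto a\cdot y$; the value $a\cdot\tau\be_n = -M$ forces $a_n=-M/\tau<0$, incompatible with the upward invariance of the positive set. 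So we may assume $u(x_k)$ is bounded, and Arzel\`a--Ascoli together with the standard stability of viscosity solutions yields a locally uniform limit $\tilde u$ solving \eqref{eq:viscosity_sol} in $\R^n$, with $(\tilde u-\tilde u_\tau)(0) = M$ and $\tilde u-\tilde u_\tau \le M$ globally. Thus $\tilde u(0)\ge M > 0$. If I can show $\tilde u_\tau(0)>0$ too, both $\tilde u$ and $\tilde u_\tau$ are positive (and harmonic) in a neighborhood of $0$, the difference attains the interior maximum $M$ there, and the strong maximum principle propagates $\tilde u\equiv\tilde u_\tau+M$ on the connected component $C$ of $\{\tilde u>0\}\cap\{\tilde u_\tau>0\}$ containing $0$. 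If $C=\R^n$, then $\tilde u$ is Lipschitz harmonic on $\R^n$, hence constant by Liouville, contradicting $M>0$; otherwise a finite $x_b\in\partial C$ satisfies $\tilde u(x_b)-\tilde u_\tau(x_b)=M$, and either $\tilde u(x_b)=0$ (giving $\tilde u_\tau(x_b)=-M<0$, impossible) or $\tilde u_\tau(x_b)=0$ with $\tilde u(x_b)=M>0$ (again impossible, via the strict-positivity propagation property applied at $x_b$; see next paragraph).

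The main obstacle is that the graphicality of $u^k$ does not automatically survive locally uniform limits: strict positivity $u^k>0$ can collapse to $\tilde u=0$ in the limit. I would overcome this via a Harnack-chain argument. Whenever $\tilde u(x_\star)>0$, continuity and uniform convergence give a small ball $B_r(x_\star)$ on which $u^k\ge c_0>0$ uniformly for $k$ large; by upward invariance of $\{u^k>0\}$, the compact connected cylinder $\bigl\{|x'-x'_\star|\le r/2,\; x_{\star,n}-r/4 \le x_n \le x_{\star,n}+\tau+1\bigr\}$ lies in $\{u^k>0\}$, and the Harnack inequality on this fixed set gives $u^k(x_\star+\tau\be_n)\ge c\, u^k(x_\star)\ge c\, c_0$ with $c>0$ independent of $k$. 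Passing to the limit yields $\tilde u(x_\star+\tau\be_n)>0$, i.e., $\tilde u_\tau(x_\star)>0$, which is exactly what was needed at both $0$ and $x_b$ above. This closes the chain of contradictions and forces $M\le 0$, i.e., $u\le u_\tau$ for every $\tau>0$.
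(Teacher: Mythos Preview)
Your overall strategy --- maximize $u-u_\tau$, pass to a limit along a maximizing sequence, and exploit the strong maximum principle together with a Harnack-chain argument that transfers positivity of $\tilde u$ upward by $\tau\be_n$ --- is different from the paper's approach (which minimizes $\partial_n u$ and rescales by the distance to the free boundary), and your Case~2 argument is clean: once you know $\{\tilde u>0\}\subset\{\tilde u_\tau>0\}$, the maximum principle on the connected component of $\{\tilde u>0\}$ containing $0$ forces a contradiction at a boundary point or via Liouville.

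The gap is in Case~1. When $u(x_k)\to\infty$ you correctly obtain an entire Lipschitz harmonic limit $v(y)=a\cdot y$ of $u(\cdot+x_k)-u(x_k)$ with $a_n=-M/\tau<0$, but the sentence ``incompatible with the upward invariance of the positive set'' does not correspond to an actual obstruction. In this regime the translated positive sets $\{u^k>0\}$ fill up all of $\R^n$ (since $\mathrm{dist}(x_k,\{u=0\})\to\infty$), so upward invariance gives no information; and the limit $v$ is not $u$ itself, so there is no reason its positivity set should inherit any graphicality. Equivalently, from $v_k(T\be_n)\to Ta_n$ for each \emph{fixed} $T$ and $v_k\ge -u(x_k)\to-\infty$ you cannot extract a contradiction. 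What Case~1 really gives you is $\partial_n u(x_k)\to a_n<0$, but to turn this into a contradiction you need to bring the free boundary back into the picture --- for instance by rescaling with $r_k:=\mathrm{dist}(x_k,\{u=0\})$ so that a zero point sits on $\partial B_1$ in the limit. That rescaling is precisely the paper's mechanism, and the contradiction there comes from a delicate interplay between the affine behavior of $\partial_n \bar w$ and the $C^{1/2}$ (not Lipschitz) growth forced at the limiting free boundary point on $\partial B_1$. Without some such rescaling, Case~1 remains open; alternatively, you would need a separate argument showing that a maximizing sequence for $u-u_\tau$ can always be chosen with $u(x_k)$ bounded, which is not obvious.
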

\begin{proof}
We assume $\Lambda(u) \neq \emptyset$, otherwise $u$ is constant. Let us argue by contradiction, and let us assume that we have (with the universal gradient bound as in Lemma \ref{lem:visclip}), 
\[
\gamma := -\inf_{\R^n\backslash\CSetC}\partial_n u>0.
\]

Consider a sequence $x_i \in \{u > 0\}$ such that 
\[
\partial_n u (x_i) \to -\gamma\quad\text{as}\quad i \to \infty
\]
 and let $y_i\in \CSetC$ be such that 
$$
r_i := |x_i-y_i|= {\rm dist}(x_i, \Lambda(u)).
$$
If we rescale the solution as
\[
w_i(x) := \frac{u(r_i x + x_i)}{r_i},
\]
then
\[
\left\{
\begin{array}{rcll}
\Delta w_i &=& 0&\quad\text{in}\quad B_1,\\
w_i &\ge& 0&\quad\text{in}\quad B_1.
\end{array}
\right.
\]
With $|\nabla w_i|\le  C$ in $\R^n$ by Lemma \ref{lem:visclip}, and 
\begin{equation}
\label{TheRescaledPoints}
w_i(\overline{y}_i)=0
\quad\text{ where $\overline{y}_i:=\frac{y_i-x_i}{r_i}\in\mathbb{S}^{n-1}$,}
\end{equation}
we have, up to a subsequence, 
\[
w_i\to \overline{w}\qquad\text{locally uniformly in $\R^n$, and in $C^1_{\rm loc}(B_1)$}
\]
for some harmonic function $\overline{w}$. In particular, we have
\begin{equation}
\label{TheFunctionIsNonnegative}
\overline{w}\ge0 \text{ and } |\nabla\overline{w}|\le C \text{ in }\R^n,
\end{equation} 
and 
$$\partial_n \overline{w}(0) = -\gamma\text{ and }\partial_n\overline{w}\ge -\gamma \text{ in $B_1$.}$$

Strong maximum principle, applied to $\partial_n\overline{w}$, implies that 
$\partial_n \overline{w} \equiv -\gamma \text{ in $B_1$,}$ and we can write 
$$
\overline{w}(x',x_n) = -\gamma x_n + g(x') \text{ in $B_1$}
$$ 
for some Lipschitz function $g$ depending on  $x'\in \R^{n-1}$. 
Moreover, with $\overline{w}\ge 0$, we have $g(x') \ge \gamma |x_n|\ge 0$ for any $(x', x_n)\in B_1$. Restricting to $\partial B_1$, we have
\begin{equation}
\label{eq:contrg}
g(x') \ge \gamma \sqrt{1-|x'|^2}\ge 0\quad\text{for any}\quad |x'|\le 1. 
\end{equation}

Up to a subsequence, the points $\overline{y}_i$ from \eqref{TheRescaledPoints} converge  to some $\overline{y}\in\mathbb{S}^{n-1}$. The condition that each $w_i\in \mathcal{G}(\be_n)$ implies that $\bar{y}_i\cdot \be_n \leq 0$ and so  $
\overline{y}\cdot\be_n\le 0.
$
If $\overline{y}\cdot\be_n<0$, then $\overline{w}(\overline{y})=0$ and $\partial_n\overline{w}=-\gamma<0$ implies $\overline{w}(\overline{y}+t\be_n)<0$ for small $t>0$, contradicting \eqref{TheFunctionIsNonnegative}. Therefore,   we have
$$
\overline{y}\cdot\be_n= 0 \text{ and }|\overline{y}'|=1.
$$
As a result, we have $g(\overline{y}')=\overline{w}(\overline{y})=0$. Now we take $p\in\mathbb{S}^{n-1}$, then \eqref{eq:contrg} implies that 
$$
g(p') - g(\overline{y}') =g(p')\geq \gamma\sqrt{1-|p'|^2} \geq \frac12\gamma\sqrt{|\overline{y}'-p'|},
$$ 
contradicting the Lipschitz regularity of $g$ for $p'$ close to $\overline{y}'.$
\end{proof}

A useful corollary is the stability of the class $\mathcal G(\be)$:
\begin{cor}
\label{cor:closedgraph}
 If $u_i \in \mathcal G(\be)$ and $u_i \rightarrow u_\infty$ locally uniformly, then $u_\infty \in \mathcal G(e)$. 

 %If  $u\in \mathcal G(\be_i)$ with $\be_i \rightarrow \be$, then $u\in \mathcal G(e)$. 
\end{cor}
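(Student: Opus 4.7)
The goal is to verify both requirements in the definition of $\mathcal{G}(\be)$ for the limit $u_\infty$: namely, (i) $u_\infty$ is a viscosity solution of \eqref{eq:viscosity_sol}, and (ii) $\Lambda(u_\infty)+\tau\be \supset \Lambda(u_\infty)$ for every $\tau>0$. My plan is to first transfer monotonicity from the sequence to the limit (yielding (ii) almost immediately), and then invoke the standard stability of the viscosity framework to obtain (i).

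\medskip

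\emph{Step 1: monotonicity passes to the limit.} After rotating coordinates so that $\be=\be_n$, each $u_i$ lies in $\mathcal{G}(\be_n)$, so Lemma~\ref{lem:monotone} gives $\partial_n u_i \ge 0$, i.e., $u_i(x+\tau\be)\ge u_i(x)$ for all $x\in\R^n$ and $\tau>0$. Since locally uniform convergence preserves pointwise inequalities, the same bound holds for $u_\infty$, so $u_\infty$ is nondecreasing in the direction $\be$.

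\medskip

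\emph{Step 2: deducing graphicality of $\Lambda(u_\infty)$.} Fix $\tau>0$ and $x\in\Lambda(u_\infty)$. Monotonicity together with $u_\infty\ge 0$ (which passes to the limit from $u_i\ge 0$) gives
\[
0\le u_\infty(x-\tau\be)\le u_\infty(x)=0,
\]
so $x-\tau\be\in\Lambda(u_\infty)$, which is equivalent to $x\in \Lambda(u_\infty)+\tau\be$. This proves the set inclusion in Definition~\ref{defi.2}.

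\medskip

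\emph{Step 3: $u_\infty$ is a viscosity solution.} This is the standard stability of Definition~\ref{DefViscositySolutionsClassical} under locally uniform limits. Harmonicity in $\{u_\infty>0\}$ follows because $\{u_\infty>0\}$ is open, $u_i\to u_\infty$ locally uniformly, and each $u_i$ is harmonic on a neighborhood of any given compactly contained subset (for large $i$, $\{u_i>0\}$ contains such a subset by uniform convergence). For the free boundary condition, suppose for contradiction that a strict comparison subsolution $v$ (Definition~\ref{defi:comp_sol}) touches $u_\infty$ from below at some $x_\circ\in \partial\{u_\infty>0\}$; a standard perturbation—shifting $v$ slightly in the direction of its inward normal and/or downward—produces, for $i$ large, a strict subsolution still touching $u_i$ from below at a free boundary point of $u_i$, contradicting that $u_i$ is a viscosity solution. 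The supersolution case is symmetric.

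\medskip

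\emph{Anticipated difficulty.} The only technical step is the stability argument in Step~3, which requires knowing that free boundary points of $u_\infty$ can be approximated by free boundary points of $u_i$. This is where the monotonicity from Step~1 is helpful: it forces $\partial\{u_i>0\}$ to be a graph in the direction $\be$, so the pointwise convergence of $u_i$ to $u_\infty$ upgrades to local Hausdorff convergence of the free boundaries along the graph direction, which is exactly what the perturbation argument needs. Everything else in the proof is immediate from pointwise passage to the limit.
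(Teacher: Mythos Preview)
Your proof is correct and follows the same approach implicit in the paper, which states the corollary without proof as an immediate consequence of Lemma~\ref{lem:monotone}. Your Steps~1--2 are exactly the intended argument; Step~3 (stability of viscosity solutions under locally uniform limits) is standard and does not actually require the monotonicity you invoke in the ``anticipated difficulty''---the usual perturbation argument works directly, so you can simplify that discussion.
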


%\begin{proof}
%The first statement follows from Lemma \ref{lem:monotone}, from the fact that the uniform limit of monotone functions is monotone and from the fact that monotone functions in the direction $\be$ have graphical free boundaries in that direction. 

%For the second statement we proceed by contradiction and assume there is some point $x$ and some $\tau > 0$ such that $u(x) = 0$ but $u(x-\tau e) > 0$. By Lipschitz regularity of $u$ (Lemma \ref{lem:visclip}), we have that $u(x-\tau e_i) > 0$ for $i$ large enough. Of course this contradicts $u\in \mathcal G(e_i)$ and we are done. 
%\end{proof}

The following proposition establishes the variational structure behind monotone viscosity solutions. 
For this proposition, it is more convenient to use the cylindrical coordinates. For $R,L>0$, we denote by 
$$
B_R':=\{x=(x',x_n)\in\R^n:x_n=0, |x'|<R\}.
$$

\begin{prop}
\label{prop:cont_graph}
For $L>H>0$, let $u$ be a viscosity solution to the classical one-phase problem \eqref{eq:viscosity_sol} in $\Omega =B_2'\times(-2L-H,2L+H)$ with 
\[
\partial_n u\ge0 \quad \text{in}\quad \Omega.
\]

If its contact set is a subgraph
$$
\CSetC=\{(x',x_n):x_n\le f(x')\}
$$
for a continuous function $f$ satisfying
\[
-H<f<H  \quad \text{in} \quad B_2',
\]
then $u$ is the unique minimizer of the Alt--Caffarelli functional \eqref{eq:JOm} in $D=B_1'\times(-L,L)$. 
\end{prop}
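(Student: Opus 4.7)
The idea is to fix an arbitrary minimizer $v$ of $\mathcal{J}_D$ with $v|_{\partial D} = u|_{\partial D}$ and to prove $u \equiv v$ by a sliding argument in the direction $\be_n$; this simultaneously yields that $u$ is a minimizer and that the minimizer is unique. The monotonicity $\partial_n u \ge 0$ (which is what the graphicality forces by Lemma \ref{lem:monotone}) makes the vertical translates $u^\tau(x) := u(x + \tau \be_n)$ a monotone family with $u^\tau \ge v$ on $\partial D$ for every $\tau \ge 0$. For $\tau_0$ slightly below $L+H$, since $\sup_{\bar B_1'} f < H$, the contact set of $u^{\tau_0}$ lies strictly below $\bar D$, so $u^{\tau_0}$ is positive and harmonic on $\bar D$; comparison on $\{v > 0\}$ together with the trivial inequality $u^{\tau_0} > 0 = v$ on $\{v = 0\}$ yields $u^{\tau_0} \ge v$ in $\bar D$.

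Define $\tau^* := \inf\{\tau \in [0,\tau_0] : u^\tau \ge v \text{ in } \bar D\}$; the goal is $\tau^* = 0$. Suppose instead $\tau^* > 0$; by continuity there is a contact point $x^* \in \bar D$ with $u^{\tau^*}(x^*) = v(x^*)$, which I would analyze in three cases. \emph{(i) Interior positivity contact:} $x^* \in \text{int}(D)$, $v(x^*) > 0$. The strong maximum principle forces $u^{\tau^*} \equiv v$ on the component $U$ of $\{v > 0\}$ containing $x^*$; because a continuous graph cannot enclose a bounded region, $U$ must reach $\partial D$. On $U \cap \partial D$ we then have $u(x) = u(x + \tau^*\be_n)$, and applying the strong maximum principle to the harmonic nonnegative function $u(\cdot + \tau^*\be_n) - u(\cdot)$ in the connected supergraph $\{x_n > f(x')\}$ forces $\tau^*$-periodicity of $u$ in $x_n$; combined with the monotonicity, this contradicts the existence of the free boundary.

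\emph{(ii) Interior free-boundary contact:} $x^* \in \text{int}(D)$ with $v(x^*) = u^{\tau^*}(x^*) = 0$, both on their free boundaries. The ordinary comparison principle fails because those boundaries need not be smooth. The plan is to pass to sup- and inf-convolutions $\overline{(u^{\tau^*})}_s$ and $\underline{v}_s$ (Definition \ref{DefConvolutionsClassical}); by Lemma \ref{PropertyOfConvolutionsClassical} they enjoy interior/exterior tangent balls and the one-sided asymptotics needed to match the comparison sub- and supersolutions of Definition \ref{defi:comp_sol}. A small dilation then produces a strict comparison subsolution touching the inf-convolution from below at a nearby contact point, contradicting Definition \ref{DefViscositySolutionsClassical}. \emph{(iii) Contact at the fixed boundary:} $x^* \in \partial D$, the boundary stickiness case. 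Under the hypotheses on $L$, $H$, and $f$, the function $v$ vanishes on the entire bottom face $\{x_n = -L\}\cap B_1'$ and on an open portion of the lateral boundary around $x^*$, so Theorem \ref{thm:CS} applies to $v$ near $x^*$: $\partial\{v > 0\}$ is $C^1$ up to $\partial D$ with $|\nabla v| \ge 1$ on $\partial\{v > 0\} \cap \partial D$. With this boundary regularity, a sup/inf-convolution argument analogous to Case (ii), carried out up to the fixed boundary, produces the contradiction.

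Once $\tau^* = 0$ is established, $u \ge v$ in $\bar D$. A symmetric sliding from below (starting at a sufficiently large negative $\tau$, where $u^\tau \equiv 0 \le v$ in $\bar D$) gives $u \le v$, and hence $u \equiv v$. The principal obstacle is Case (iii), the boundary stickiness phenomenon described in the introduction: without Theorem \ref{thm:CS}, the free boundary of $v$ could stick to $\partial D$ in an uncontrolled way, and no classical comparison tool would apply at the fixed-boundary contact point.
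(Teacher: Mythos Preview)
Your overall plan (vertical sliding of $u^\tau$ against a minimizer $v$) matches the paper's, but there is a genuine gap in Case~(ii), and the fix is precisely the structural difference between your outline and the paper's proof.

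At an interior free-boundary contact you have $u^{\tau^*}\ge v$ with $u^{\tau^*}(x^*)=v(x^*)=0$, and neither free boundary is known to be smooth there: $u^{\tau^*}$ has only a continuous graph as free boundary, and the minimizer $v$ may be singular at $x^*$. Your proposed remedy, ``pass to $\overline{(u^{\tau^*})}_s$ and $\underline{v}_s$'', enlarges $u^{\tau^*}$ and shrinks $v$, hence \emph{destroys} the contact rather than exploiting it; there is then no point at which to place your dilated strict subsolution, and no way to invoke Definition~\ref{DefViscositySolutionsClassical}. The same issue resurfaces in Case~(iii): Chang-Lara--Savin gives $C^1$ regularity of $\partial\{v>0\}$ near $x^*$, but $C^1$ alone does not produce the interior ball you need for a Hopf-type argument, and $u^{\tau^*}$ still carries no tangent-ball information.

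The paper resolves this by regularizing \emph{before} sliding. One first shifts by a small $s>0$ and uses the strong maximum principle in $\{u>0\}$ to obtain a strict gap $u_s\ge u+\delta$ on $\overline{\{u>0\}\cap D}$; only then does one take the inf-convolution $v_\tau=\inf_{B_\rho}u_\tau$ (with $\rho$ small relative to $\delta$) and slide $v_{s+t}$. This has two effects. First, the $\delta/2$ gap persists on $\partial D$, so contact inside $\{w>0\}$ (your Case~(i) and the positive part of Case~(iii)) is ruled out in one line, with no periodicity argument. Second, and crucially, at \emph{any} free-boundary contact the inf-convolution supplies an exterior tangent ball to $\partial\{w>0\}$; since $w$ is a minimizer this forces $\bar x$ to be a regular point, and the Hopf comparison then goes through both in the interior and (via Theorem~\ref{thm:CS}) at the fixed boundary. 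In short: the missing idea is to carry the tangent-ball structure through the sliding by convolving first.
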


\begin{rem}
With Lemma \ref{lem:monotone}, Proposition \ref{prop:cont_graph} implies Theorem \ref{MinimizingClassicalIntro}. 
\end{rem}
\begin{rem}
This is the only reason why we require the free boundary to be continuous in the main results. 
\end{rem} 

Proposition \ref{prop:cont_graph} follows from the following two lemmata, where we show, respectively, that $u$ is no less than any minimizer, and that $u$ is no larger than any minimizer in $D$. 
 
\begin{lem}
\label{lem:cont_graph1}
Under the same assumptions as in Proposition \ref{prop:cont_graph},  let $w$ be a minimizer of the Alt--Caffarelli functional \eqref{eq:JOm} in $D$ with $w=u$ on $\partial D$. 

Then $u\ge w$ in $D$. 
\end{lem}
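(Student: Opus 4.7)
The plan is a sliding argument in the direction $\be_n$. For $t \ge 0$, set $u^t(x) := u(x + t\be_n)$. By the monotonicity $\partial_n u \ge 0$, we have $u^t \ge u = w$ on $\partial D$ for every $t \ge 0$. For $t$ large enough—specifically $t > L + \sup_{\overline{B_1'}} f$, which is strictly less than $L + H$—the continuous graph $\{x_n = f(x') - t\}$ lies below $\{x_n = -L\}$, so $u^t$ is strictly positive and harmonic on $\overline D$; applying the maximum principle component-wise on $\{w>0\} \cap D$ (with $u^t \ge 0 = w$ on $\partial\{w>0\}\cap D$ and $u^t \ge w$ on $\partial D$), we obtain $u^t \ge w$ in $D$. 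We then set
\[
t_* := \inf\{t \ge 0 : u^t \ge w \text{ in } D\}.
\]
By continuity $u^{t_*} \ge w$, and the goal is to show $t_* = 0$. Assume otherwise; then there exists a touching point $x_* \in \overline D$ with $u^{t_*}(x_*) = w(x_*)$, and we must derive a contradiction for each possible location of $x_*$.

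If $x_* \in D$ with $w(x_*) > 0$, then $u^{t_*} - w$ is a nonnegative harmonic function near $x_*$ vanishing at $x_*$, so the strong maximum principle forces $u^{t_*} \equiv w$ on the connected component of $\{w>0\}\cap D$ through $x_*$. Propagating this identity up to $\partial D$ (where $w = u$) yields $u(y + t_* \be_n) = u(y)$ for some boundary point $y$ with $u(y) > 0$. Combined with $\partial_n u \ge 0$ this forces $\partial_n u \equiv 0$ on the segment $[y, y + t_* \be_n]$, and the strong maximum principle applied to the nonnegative harmonic function $\partial_n u$ then yields $\partial_n u \equiv 0$ on the whole connected positive set of $u$, contradicting $|\nabla u| = 1$ on the continuous graphical free boundary $\{x_n = f(x')\}$.

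If $x_* \in D$ with $w(x_*) = 0$, then $x_*$ lies simultaneously on $\partial\{u^{t_*}>0\}$ and $\partial\{w>0\}$, and the inequality $u^{t_*} \ge w$ forces $\Lambda(u^{t_*}) \subset \Lambda(w)$ locally near $x_*$. The classical strict-maximum-principle at free-boundary points is not directly available, so I would regularize using the sup/inf-convolutions from Lemma \ref{PropertyOfConvolutionsClassical}. These produce strict comparison sub/supersolutions with the interior-ball property at free-boundary points and converge to the originals as $\delta \to 0$; using them as test functions against $w$ (a viscosity solution, being a minimizer) and $u^{t_*}$ contradicts Definition \ref{DefViscositySolutionsClassical} by providing a strict comparison subsolution touching a viscosity solution from below at a free-boundary point.

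The last and hardest case is $x_* \in \partial D$, which is the `boundary stickiness' scenario: the free boundary of $w$ may meet $\partial D$ at $x_*$. Here I invoke the Chang-Lara--Savin boundary regularity theorem (Theorem \ref{thm:CS}): applied to the minimizer $w$ on $D$ with the relatively open set $Z \subset \partial D$ where $u$ vanishes, it provides $C^1$ regularity of $\partial\{w>0\}$ up to $\partial D$ together with $|\nabla w| \ge 1$ at such points. This regularity is exactly what is needed to extend the sup/inf-convolution comparison from the interior free-boundary case up to $\partial D$, yielding the final contradiction. The main obstacle is this boundary step: without Theorem \ref{thm:CS}, the free boundary of $w$ could attach to $\partial D$ in an uncontrolled way and the sliding comparison would break down.
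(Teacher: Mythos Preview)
Your overall strategy---sliding translates of $u$ in the $\be_n$ direction until they touch $w$---matches the paper, but there is a genuine gap in your handling of the free-boundary touching case (your Case 2), and this gap propagates to Case 3.

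You propose to slide $u^t$ directly, find a touching point $x_*\in\partial\{w>0\}\cap D$, and \emph{then} regularize via inf-convolution. This does not work: the inf-convolution $\underline{(u^{t_*})}_\rho$ satisfies $\underline{(u^{t_*})}_\rho\le u^{t_*}$, so it no longer dominates $w$, and the touching configuration at $x_*$ is destroyed. Your sentence about ``providing a strict comparison subsolution touching a viscosity solution from below'' has the directions reversed and does not correspond to any valid application of Definition~\ref{DefViscositySolutionsClassical}. The underlying difficulty is that at a common free-boundary point of $u^{t_*}$ and $w$, neither function is known to have a regular free boundary (the free boundary of $u^{t_*}$ is merely continuous), so there is no Hopf-type contradiction available without some regularization built into the sliding family itself.

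The paper resolves this by regularizing \emph{before} sliding. First, shift by a small fixed $s>0$; using the continuity of $f$ and the strict positivity of $\partial_n u_s$ on a set $E$ with $\{u>0\}\cap D\Subset E\Subset\{u_s>0\}$, one obtains $u_s\ge u+\delta$ on $\overline{\{u>0\}\cap D}$. This strict gap allows choosing $\rho$ small so that the inf-convolution $v_s:=\inf_{B_\rho}u_s$ still satisfies $v_s\ge u=w$ on $\partial D$ (indeed $v_{s+t}\ge w+\delta/2$ on $\overline{\{u>0\}}\cap\partial D$ for all $t\ge0$). One then slides $v_{s+t}$, finds the critical $t_*>0$ and the touching point $\bar x$. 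Now two things come for free: (i) the strict boundary gap immediately rules out $\bar x\in\{w>0\}\cap\partial D$ and, via the strong maximum principle, $\bar x\in\{w>0\}\cap D$ as well---so your Case 1 argument is unnecessary; (ii) at the remaining free-boundary touching point, Lemma~\ref{PropertyOfConvolutionsClassical} gives $v_{s+t_*}$ an exterior tangent ball and a supersolution expansion, which makes $\bar x$ a regular point of $\partial\{w>0\}$ and yields the Hopf contradiction. The same mechanism, combined with Theorem~\ref{thm:CS}, handles $\bar x\in\partial D\cap\{u=0\}$.

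A minor point: in your Case 1, the conclusion ``$\partial_n u\equiv 0$ contradicts $|\nabla u|=1$ on the graphical free boundary'' is not quite right as stated, since $|\nabla u|=1$ holds only in the viscosity sense and the free boundary is merely continuous. The actual contradiction is that $\partial_n u\equiv 0$ forces $u$ to be independent of $x_n$ in $\{u>0\}=\{x_n>f(x')\}$, hence $u\equiv 0$ there by continuity at the graph.
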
 

\begin{proof}
Suppose not;  then, there exist  some $x_\circ\in D$ and $\eta_\circ > 0$ such that
\[
w(x_\circ) > u(x_\circ) + \eta_\circ. 
\]
For $\tau \in \R$, define the translation of $u$ as 
\[
u_\tau(x', x_n) := u(x', x_n + \tau).
\]
Fix $s > 0$ small such that $$w(x_\circ) > u_s(x_\circ) + \frac12 \eta_\circ.$$

\vem 

\noindent \textit{Step 1: Setting up the inf-convolution.} 

By monotonicity of $u$ and the uniform continuity of the free boundary in $D$, there is a set  $E$ such that
\[
\{u > 0\}\cap D \Subset E\Subset \{u_{s} > 0\},
\]
(see Figure~\ref{fig:1}). By strict maximum principle in the interior of $E$, we have
\[
\inf_E \partial_n u_s > 0,
\]
which gives $\delta  >0$ such that 
\begin{equation}
\label{eq:from3}
u_s \ge u + \delta \quad\text{in}\quad \overline{\left\{u > 0\right\}\cap D}.
\end{equation}

\begin{figure}
\centering
\includegraphics[scale = 0.7]{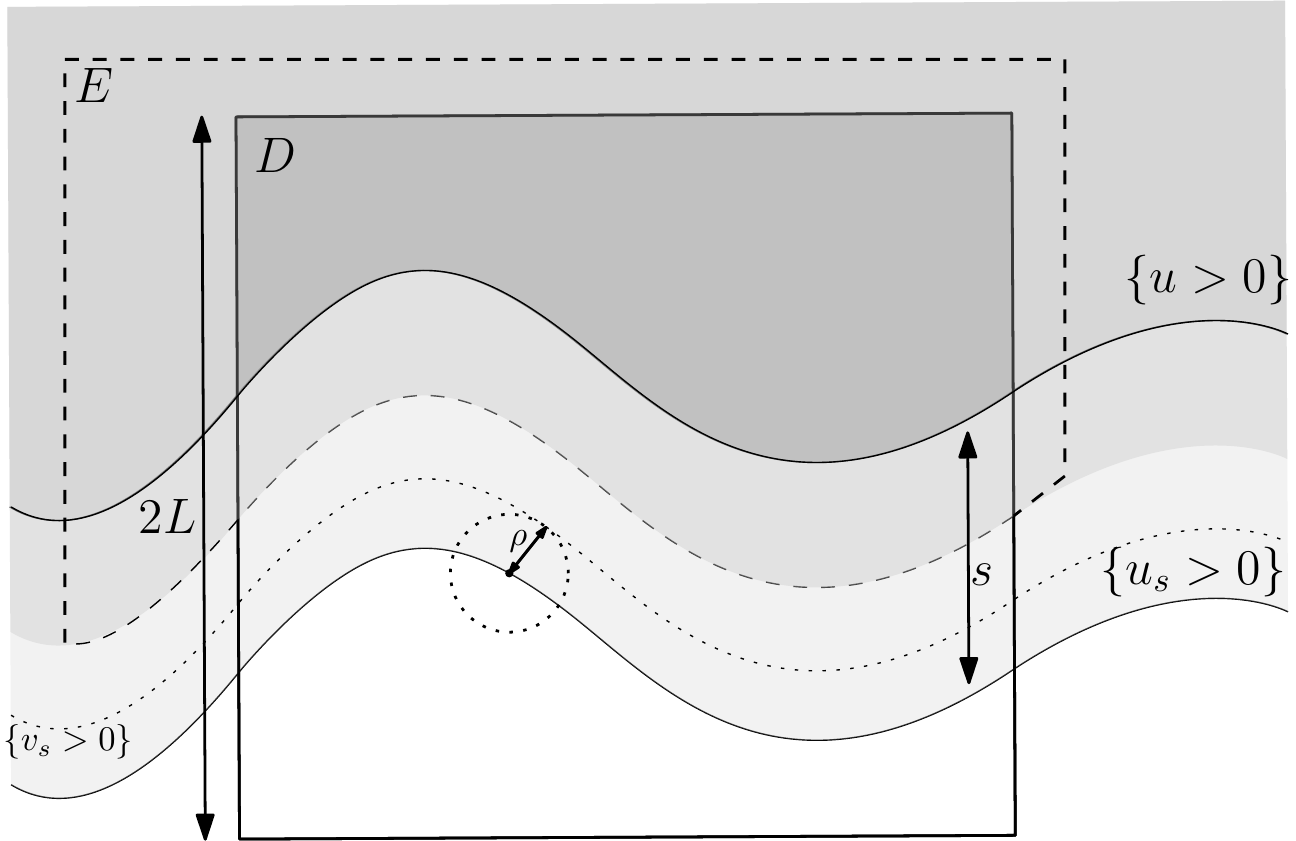}
\caption{Setting in the proof of Lemma~\ref{lem:cont_graph1}}.
\label{fig:1}
\end{figure}
For $\rho > 0$ small denote the inf-convolution of $u_\tau$, as in Definition \ref{DefConvolutionsClassical}, 
\[
v_\tau(x) := \inf_{B_\rho(x)} u_\tau.
\]
By the monotonicity of $u$, we have 
$$v_{s+t} \geq v_s, \qquad \forall t \geq 0.$$ 
Moreover, if we pick $\rho > 0$ small enough (depending on $\delta$ and  the modulus of continuity for $u$), we have (in light of \eqref{eq:from3}) that
\begin{equation}
\label{eq:fromfrom}
v_s(x) \ge u(x)  \quad\text{for all}~~x\in \overline{D},\quad\text{and}\quad w(x_\circ) > v_s(x_\circ) + \frac14 \eta_\circ.
\end{equation}

\vem

\noindent \textit{Step 2: Initializing the sliding argument.}

By the upper bound on $f$ as in Proposition \ref{prop:cont_graph}, we see that if $t$ is large enough such that $t>\rho+H+L-s$, then 
$ v_{s+t} > 0$ in $\overline{D}.$ 
With Lemma \ref{PropertyOfConvolutionsClassical}, this implies
\[
\Delta v_{s+t} \le 0\quad\text{in}\quad D.
\]
On the other hand, we know that for all $t \ge 0$, 
$v_{s+t} \ge  u= w$ on $\partial D$. 
Since $\Delta w \ge 0$ in $D$, we have
\begin{equation}
\label{eq:from44}
v_{s+t} \ge w \quad\text{in}\quad \overline{D}
\end{equation}
if $t>\rho+H+L-s$.

Let us define now the critical contact time 
\[
t_* = \inf\{t \ge 0 : v_{s+t}\ge w \text{ in }\overline{D}\}.
\]
From \eqref{eq:fromfrom}, $t_* > 0$. 

\vem

\noindent \textit{Step 3: The contact point in the sliding argument.}

Let $\overline{x} \in \overline{\{w > 0\} \cap D}$ be such that 
$$v_{s+t^*}(\overline{x}) = w(\overline{x}).$$ Note that such a touching point must exist, otherwise the nonnegativity and monotonicity of $u$ would imply that $v_{s+t^*-\varepsilon} \geq w$ for some small $\varepsilon$, contradicting the definition of $t^*$. 

With \eqref{eq:from3}, if we take $\rho$ small, then we can assume
\begin{equation}
\label{eq:from4}
v_{s+t^*}\ge v_s \ge u + \frac{\delta}{2} = w + \frac{\delta}{2} \quad\text{on}\quad \overline{\left\{u > 0\right\}}\cap \partial D.
\end{equation}
Thus 
$\overline{x} \notin \{w > 0\} \cap \partial D.$ Meanwhille, in  $\{w > 0\}\cap D$, we have $\Delta v_{s+t^*} \leq 0 = \Delta w$. Combined with $v_{s+t^*} \geq w$, this tells us that $v_{s+t^*} > w$ in $\{w > 0\}\cap D$ and that 
$\overline{x} \notin \{w > 0\} \cap D.$
As a result, we must have  
$$v_{s+t_*}(\bar x) = w(\bar x) = 0.$$ 

\vem

\noindent \textit{Step 4: The contradiction.}

There are two possibilities to consider,  depending on whether  this touching point lies on $\partial D$ or inside $D$.

 If $\bar x\in \partial \{w > 0\}\cap D$, then  we have $\bar x \in \partial\{v_{s+t_*} > 0\}$. With the existence of a tangent ball as in Lemma \ref{PropertyOfConvolutionsClassical}, the point $\bar{x}$ is a regular point of $\partial \{w > 0\}$ (see, e.g. \cite[Theorem 8.1]{AC81}). 

% and $\bar x$ is a regular point for $w$. Indeed, since $\bar x \in \partial \{v_{s+t_*} > 0\}$, $B_\rho(\bar x) \subset \{u > 0\}$ and it touches $\partial\{u > 0\}$ at some point $\bar y$. Thus, $B_\rho(\bar y)\subset \{v_{s+t_*} = 0\}$ and touches $\partial \{v_{s+t_*} > 0\}$ at $\bar x$. Since $v_{s+t_*}\ge w$, this implies that $B_\rho(\bar y)\subset \{w = 0\}$, and touches $\partial \{w >  0\}$ at $\bar x$. {\color{blue} FIGURE?***} 

Since $w$ is a minimizer, we have 
\[
w(x) = \langle x - \bar x,   \nu\rangle_+ + o(|x-\bar x|)
\]
where $\nu$ is the inner unit normal of $\partial\PosS$ at $\overline{x}$.
On the other hand, the supersolution property in Lemma \ref{PropertyOfConvolutionsClassical} implies
\[
v_{s+t_*}(x) \le\langle x-\bar x, \nu\rangle_+ + o(|x-\bar x|).
\]
These contradict Hopf's lemma for the nonnegative harmonic function $v_{s+t^*}-w$ at $\overline{x}.$ Consequently, we must have
$$\bar x\in \partial(\{w > 0\}\cap D)\cap \{w = 0\}\cap \partial D.$$

%If we consider now the function $\varphi(x) := v_{s+t_*}(x) -w(x)$, then $\varphi \ge 0$ and $\varphi(\bar x) = 0$. We note that $w$ is the maximum of two harmonic functions so $\Delta w \geq 0$. Moreover, 
%\[
%\left\{
%\begin{array}{rcll}
%\Delta \varphi & \le & 0&\quad\text{in}\quad \{u_{s+t^*}> 0\}\cap D\\
%\varphi & \ge & 0 & \quad\text{in}\quad D
%\end{array}
%\right.
%\]
%and for $\tau > 0$,
%\[
%\varphi(\bar x + \tau \bar \nu) \le  o(\tau),
%\]
%which contradicts Hopf's lemma (recall that $\{u_{s+t^*} > 0\}$ satisfies the interior ball condition at $\bar x$). Thus, the touching point, $\bar x$ cannot be in $\partial \{w > 0\}\cap D$.

  With $v_{s+t^*}(\overline{x})=0$ and  \eqref{eq:from4}, we have $\bar x\notin\partial\{u > 0\}\cap \partial D$ and thus, there is a neighborhood $Z\subset\partial D$ of $\bar x $ where 
  $$
  w=u = 0\text{ on $Z$.}
  $$
   In particular, we are in the situation of Theorem~\ref{thm:CS}, which means that  $\partial(\{w > 0\}\cap D)\cap \{w = 0\}$ is $C^1$ around $\bar x$, and $\nabla w$ is well-defined at $\bar x$ (since the normal is well-defined) with $|\nabla w(\bar x)|\ge 1$. Proceeding as in the previous setting, we get again a contradiction with Hopf's Lemma at $\bar x$.
   \end{proof}

\begin{lem}
\label{lem:cont_graph2}
Under the same assumptions as in Proposition \ref{prop:cont_graph},  let $w$ be a minimizer of the Alt--Caffarelli functional \eqref{eq:JOm} in $D$ with $w=u$ on $\partial D$. 

Then $u\le w$ in $D$. 
\end{lem}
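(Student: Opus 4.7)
The proof will mirror that of Lemma~\ref{lem:cont_graph1}, with inf-convolution replaced by sup-convolution and the sliding direction reversed. I argue by contradiction: suppose $u(x_\circ) > w(x_\circ) + \eta_\circ$ at some $x_\circ \in D$ and $\eta_\circ > 0$. Writing $u_\tau(x', x_n) := u(x', x_n + \tau)$, I fix $s > 0$ small enough that $u_{-s}(x_\circ) > w(x_\circ) + \tfrac12\eta_\circ$. By the same compactness-plus-strict-monotonicity argument as in Lemma~\ref{lem:cont_graph1} (producing a set $E$ with $\{u_{-s}>0\}\cap D\Subset E\Subset\{u>0\}$ and invoking the strong maximum principle), I obtain a uniform gap $u \ge u_{-s} + \delta$ on $\overline{\{u_{-s}>0\}\cap D}$ for some $\delta > 0$.

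Next I introduce $v_\tau(x) := \sup_{B_\rho(x)} u_\tau$, which by Lemma~\ref{PropertyOfConvolutionsClassical} is subharmonic in its positive set and satisfies a subsolution-type one-sided expansion at its free boundary. For $\rho > 0$ small enough (depending on $\delta$, $s$, and on the moduli of continuity of $u$ and of $f$), the elementary bound $v_\tau \le u_\tau + C\rho$ combined with $u_{-s-t} \le u_{-s}$ for $t\ge 0$ yields
\[
v_{-s-t} \le u - \tfrac{\delta}{2} = w - \tfrac{\delta}{2}\quad\text{on}\quad \overline{\{u_{-s}>0\}}\cap\partial D \text{ for all } t\ge 0,
\]
while the uniform continuity of $f$ forces $v_{-s-t} \equiv 0$ on the complementary part of $\partial D$; hence $v_{-s-t} \le w$ on all of $\partial D$, and $v_{-s}(x_\circ) > w(x_\circ) + \tfrac14\eta_\circ$.

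I initialize the slide at $t$ so large that the positive set of $u_{-s-t}$ lies entirely above $\overline D$, making $v_{-s-t}\equiv 0\le w$ in $\overline D$. Define
\[
t_* := \inf\{t\ge 0 : v_{-s-t}\le w \text{ in }\overline D\},
\]
which is strictly positive by the strict inequality at $x_\circ$, so by continuity there is a touching point $\bar x\in \overline D$ with $v_{-s-t_*}(\bar x) = w(\bar x)$. The strict boundary bound rules out $\bar x\in \overline{\{u_{-s}>0\}}\cap\partial D$, while the strong maximum principle applied to the subharmonic $v_{-s-t_*}-w$ in $\{w>0\}\cap D$ rules out $\bar x$ from $\{w>0\}\cap D$; hence $v_{-s-t_*}(\bar x) = w(\bar x) = 0$.

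The analysis at $\bar x$ now mirrors Lemma~\ref{lem:cont_graph1}: if $\bar x\in \partial\{w>0\}\cap D$, the interior tangent-ball property for $v$ from Lemma~\ref{PropertyOfConvolutionsClassical} makes $\bar x$ a regular point of $\partial\{w>0\}$, and the matching one-sided expansions $w(x) = \langle x-\bar x,\nu\rangle_+ + o(|x-\bar x|)$ and $v_{-s-t_*}(x)\ge \langle x-\bar x,\nu\rangle_+ + o(|x-\bar x|)$ contradict Hopf's lemma for the nonnegative harmonic $w - v_{-s-t_*}$; if instead $\bar x\in \partial D\cap\partial\{w>0\}$, the strict boundary bound places $\bar x$ in the relative interior of $\{u = 0\}\cap\partial D$, yielding a neighborhood $Z\subset\partial D$ of $\bar x$ with $u = w = 0$, and Theorem~\ref{thm:CS} produces $C^1$ regularity of $\partial\{w>0\}$ near $\bar x$ with $|\nabla w(\bar x)|\ge 1$, closing the Hopf argument. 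The principal obstacle, as in Lemma~\ref{lem:cont_graph1}, is this boundary-stickiness case requiring Theorem~\ref{thm:CS}; an additional subtlety to verify is that touching on the lateral slice of the graph $\partial\{u>0\}\cap\partial D$ is excluded by taking $\rho$ small against the modulus of continuity of $f$, so that any genuine first contact must occur at an informative location handled by one of the two cases above.
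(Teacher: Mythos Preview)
Your overall strategy matches the paper's, but the boundary case you handle with Theorem~\ref{thm:CS} contains a genuine gap. In Lemma~\ref{lem:cont_graph1} the Hopf contradiction works because the inf-convolution satisfies $v_{s+t_*}(x) \le \langle x-\bar x,\nu\rangle_+ + o(|x-\bar x|)$ while Theorem~\ref{thm:CS} gives $|\nabla w(\bar x)|\ge 1$; combining these yields
\[
v_{s+t_*}-w \le (1-|\nabla w(\bar x)|)\,\langle x-\bar x,\nu\rangle_+ + o(|x-\bar x|) \le o(|x-\bar x|),
\]
contradicting the strictly positive normal derivative from Hopf. Here the roles are reversed: the sup-convolution satisfies $v_{-s-t_*}(x) \ge \langle x-\bar x,\nu\rangle_+ + o(|x-\bar x|)$, so
\[
w - v_{-s-t_*} \le (|\nabla w(\bar x)|-1)\,\langle x-\bar x,\nu\rangle_+ + o(|x-\bar x|),
\]
and since Theorem~\ref{thm:CS} only gives $|\nabla w(\bar x)|\ge 1$, this upper bound is nonnegative --- there is no contradiction with Hopf. (In the interior case the argument succeeds precisely because $\bar x$ is then a \emph{regular} free boundary point of the minimizer $w$, where $|\nabla w(\bar x)|=1$ exactly.)

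The paper avoids this issue entirely by showing the boundary touching cannot occur. The strict gap $\tilde v_{-s-t} \le w - \delta/2$ holds on $\overline{\{\tilde v_{-s}>0\}\cap\partial D} \supset \overline{\{\tilde v_{-s-t}>0\}\cap\partial D}$, while the critical touching point necessarily lies in $\overline{\{\tilde v_{-s-t_*}>0\}\cap D}$; combining these forces $\bar x \in \partial\{\tilde v_{-s-t_*}>0\}\cap D$ directly. So Theorem~\ref{thm:CS} is not needed in this direction at all --- the asymmetry between the two lemmata is real, not cosmetic. Your auxiliary claim that $v_{-s-t}\equiv 0$ on $\partial D\setminus\overline{\{u_{-s}>0\}}$ is also not correct for small $t$ (the sup-convolution enlarges the positive set by $\rho$), but this becomes moot once the boundary case is excluded as above.
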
 

\begin{proof}
Suppose not; we find $x_\circ\in D$ and $\eta_\circ$ such that 
\[
u(x_\circ) > w(x_\circ) + \eta_\circ. 
\]
With the same notation for the translation as in the previous proof, we fix
$s > 0$ small such that $u_{-s}(x_\circ) > w(x_\circ) + \frac12 \eta_\circ$. 

As before, there exists some $\delta  >0$ small such that 
\begin{equation}
\label{eq:from5}
u \ge u_{-s} + \delta \quad\text{in}\quad \overline{\left\{u_{-s} > 0\right\}\cap D},
\end{equation}
and some $\rho$ small enough such that 
\begin{equation}
\label{eq:fromfrom2}
\tilde v_{-s}(x) := \sup_{B_\rho(x)} u_{-s} \le u(x) \quad\text{for all}~~x\in  \overline{ D},\quad\text{and}\quad \tilde v_{-s}(x_\circ) > w(x_\circ) + \frac14 \eta_\circ.
\end{equation}

With the assumption on the lower bound on $f$ as in Proposition \ref{prop:cont_graph}, we have 
\[
\tilde v_{-s-t} \equiv 0 \le w \quad\text{in}\quad \overline{D}
\]
if $t+s>L+H+\rho$.
Also, from \eqref{eq:from5} (taking $\rho$ smaller if necessary)
\begin{equation}
\label{eq:from6}
\tilde v_{-s-t} \le w - \frac{\delta}{2} \quad\text{on}\quad \overline{\left\{\tilde v_{-s} > 0\right\}\cap \partial D}\supset \overline{\left\{\tilde v_{-s-t} > 0\right\}\cap \partial D}.
\end{equation}

\vem 

We define
\[
t_* = \inf\{t \ge 0 : \tilde v_{-s-t}\le w \text{ in }\overline{D}\}.
\]
Arguing as before, we have $t_* > 0$, and there exists some $\bar x \in \overline{\{\tilde v_{-s-t_*} > 0\}\cap D}$ such that 
$$
\tilde v_{-s-t_*}(\bar x) = w(\bar x). 
$$
Moreover, we have 
 $\bar x\notin \{\tilde v_{-s-t_*} > 0\}\cap D$ by the maximum principle, and 
 $\bar x\notin  \partial D$ by \eqref{eq:from6}.

As a result, we have
$$
\bar x\in \partial\{\tilde v_{-s-t_*}>0\}\cap D.
$$
With the asymptotic expansion of $w$ and $v_{-s-t_*}$ from Definition \ref{DefViscositySolutionsClassical} and Lemma \ref{PropertyOfConvolutionsClassical}, this again contradicts Hopf's Lemma.
\end{proof}

Thus, as a consequence of the previous two lemmata, we obtain:
\begin{proof}[Proof of Proposition~\ref{prop:cont_graph}]
It is a combination of Lemmas~\ref{lem:cont_graph1} and \ref{lem:cont_graph2}. 
\end{proof}

We finally have: 

\begin{proof}[Proof of Theorem~\ref{MinimizingClassicalIntro}]
It follows from Lemma \ref{lem:monotone} and Proposition \ref{prop:cont_graph}
\end{proof}

%%%%%%%%%%%%%%%%%%%%%%%%%%%%%%%%%%%%%%%%%%%%%%%%%%%%%%%%%%%%%%%%%%%%%%%%%%%%%%%%%%%%%%%%%%%%%%%%%%%%%%%%%%%%%%%%%%%%%%%%
\section{Flatness of graphical solutions: the classical regime}
\label{sec:4}

In this section we prove our main result in the classical regime, namely, Theorem \ref{thm:main0}. With Theorem \ref{MinimizingClassicalIntro} (see also Lemma \ref{lem:monotone} and Proposition \ref{prop:cont_graph}), it suffices to consider global minimizers. 

We start with the following technical lemma, which says that if $u$ is monotone in the direction $\be$ and has smooth free boundary, then either $\be$ is never tangent to the free boundary or the solution is independent of the direction $\be$. 

\begin{lem}
\label{lem:interm_lem}
Let $u$ be a viscosity solution in the sense to \eqref{eq:viscosity_sol} in $B_1$ with $\partial\CSetC\cap B_1$ being $C^{2}$-submanifold with inward pointing unit normal  $\nu$. Also assume that $\{u > 0\}\cap B_1$ is connected. 

If, for some $\be\in\mathbb{S}^{n-1}$, $$\partial_\be u \ge 0 \text{ in $B_1$},$$ 
then, either $\nu(x) \cdot \be > 0$ for all $x\in  \partial\{u > 0\}\cap B_1$, or $\partial_\be u \equiv 0$ in $B_1$.  
\end{lem}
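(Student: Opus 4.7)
The plan is to reduce the dichotomy to a strong maximum principle / Hopf lemma argument for the harmonic function $v := \partial_\be u$ on the connected open set $\{u>0\}\cap B_1$. First, I would observe that $\Delta v=0$ there (since derivatives commute with the Laplacian on harmonic functions), that $v\ge 0$ by assumption, and that on the free boundary $|\nabla u|=1$ together with $u>0$ inside and $u=0$ on $\partial\{u>0\}$ forces $\nabla u=\nu$; hence the boundary trace satisfies
\[
v\big|_{\partial\{u>0\}\cap B_1} \;=\; \nabla u\cdot \be \;=\; \nu\cdot \be \;\ge\; 0.
\]

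By the strong maximum principle applied on the connected set $\{u>0\}\cap B_1$, either $v\equiv 0$ in $\{u>0\}$, which is the second alternative of the lemma (and extends trivially to $B_1$ since $u\equiv 0$ off of $\{u>0\}$), or $v>0$ everywhere in the interior $\{u>0\}\cap B_1$. In the latter case, suppose toward contradiction that the first alternative fails, i.e.\ there exists $x_0\in\partial\{u>0\}\cap B_1$ with $\nu(x_0)\cdot \be\le 0$. Combined with the boundary inequality $\nu\cdot\be\ge 0$ just derived, this forces $\nu(x_0)\cdot \be = 0$ and so $v(x_0)=0$.

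Since the free boundary is $C^2$, the interior sphere condition holds at $x_0$, and standard higher regularity for the one-phase problem upgrades $u$ to be $C^2$ up to the free boundary. Hopf's lemma applied to $v$ at $x_0$ then yields
\[
\partial_\nu v(x_0) \;>\; 0.
\]
On the other hand, differentiating the identity $|\nabla u|^2\equiv 1$ along any boundary tangent direction $\tau$ gives $D^2 u(x_0)\,\nu(x_0)\cdot \tau = 0$, so $D^2 u(x_0)\,\nu(x_0) = \lambda\, \nu(x_0)$ for some $\lambda\in\R$. Using the symmetry of $D^2 u$,
\[
\partial_\nu v(x_0) \;=\; \nu(x_0)\cdot D^2 u(x_0)\,\be \;=\; \bigl(D^2 u(x_0)\,\nu(x_0)\bigr)\cdot \be \;=\; \lambda\,\nu(x_0)\cdot \be \;=\; 0,
\]
contradicting Hopf. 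Hence $\nu(x)\cdot \be>0$ for every $x\in\partial\{u>0\}\cap B_1$, as claimed.

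The only genuinely delicate point is to be sure $v$ is regular enough up to the free boundary to justify Hopf and the boundary computation of $\partial_\nu v(x_0)$; this is not a real obstacle because a $C^2$ free boundary for the one-phase problem is automatically smooth by bootstrap, and therefore $u$ is smooth up to $\partial\{u>0\}\cap B_1$.
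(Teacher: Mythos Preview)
Your proof is correct and follows essentially the same route as the paper: set $v=\partial_\be u$, use the strong maximum principle to reduce to the case $v>0$ in $\{u>0\}$, find a boundary point where $v$ vanishes, and derive a contradiction between Hopf's lemma and an explicit computation of $\partial_\nu v$. The only cosmetic difference is in that last step: the paper observes that $x\mapsto \partial_{\nu(x_0)}u(x)=\nu(x_0)\cdot\nu(x)$ restricted to the free boundary attains its maximum at $x_0$, so its tangential derivative in the direction $\be$ vanishes, whereas you differentiate $|\nabla u|^2\equiv 1$ tangentially to get $D^2u(x_0)\,\nu(x_0)=\lambda\,\nu(x_0)$ and then pair with $\be$; these are two phrasings of the same Hessian identity.
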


\begin{proof}
Suppose not; we have $$\partial_\be u \not\equiv 0\text{ in $B_1$,}$$ but 
$$\nu(x_\circ) \cdot\be = 0\text{ for some $x_\circ \in \partial\{u > 0\}\cap B_1$.}$$ As such $\partial_{\be} u (x_\circ) =  \be\cdot \nu(x_\circ) = 0$. 

Since $\partial_\be u \not\equiv 0$ and $\partial_\be u \ge 0$,  we can apply Hopf's Lemma  to deduce that 
\[
\partial_{\be} \partial_{\nu(x_\circ)} u(x_\circ) = \partial_{\nu(x_\circ)} \partial_{\be}u(x_\circ)  > 0. 
\]
On the other hand, the function $\partial\{u > 0\} \ni x \mapsto \partial_{\nu(x_\circ)} u(x)$ has a maximum at $x_0$. As $\be$ is tangent to $\partial \{u > 0\}$ at $x_0$ we get $\partial_{\be} \partial_{\nu(x_\circ)} u(x_\circ) = 0$, the desired contradiction. 
\end{proof}

With this lemma, we show that graphical cones are flat in low dimensions. Recall the critical dimension $n_{\rm local}^*$ defined in \eqref{LargestDimensionClassical} and the notion of global minimizers from Definition \ref{DefMinimizersClassical}.
\begin{prop}
\label{prop:main}
Let $u\in \mathcal{G}(\be_n)$ in $\R^n$ with 
$$
n\le n_{\rm local}^*+1.
$$ 
If $u$ is a homogeneous minimizer,  then 
$$u = (x\cdot \be)_+$$
 for some
  $\be\cdot\be_n \ge 0$. 
\end{prop}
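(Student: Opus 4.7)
The plan is to use Lemma~\ref{lem:interm_lem} together with a continuous rotation argument to reduce to a direction along which $u$ is translation-invariant; the classification one dimension lower then follows from the definition of $n_{\rm local}^*$. First, the definition of $\mathcal G(\be_n)$ combined with Lemma~\ref{lem:monotone} gives $\partial_n u \ge 0$, and Lemma~\ref{TraceSmoothClassical} (applicable since $n \le n_{\rm local}^* + 1$) ensures that the spherical trace $\partial\Lambda(u) \cap \mathbb{S}^{n-1}$ is smooth, so that $\partial\{u>0\}\setminus\{0\}$ is a smooth hypersurface. Write $\Sigma := \{u > 0\} \cap \mathbb{S}^{n-1}$, which I assume is connected (the multi-component case can be handled component by component). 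Applying Lemma~\ref{lem:interm_lem} on any ball disjoint from the origin yields the dichotomy: \emph{either} $\partial_n u \equiv 0$ in $\{u > 0\}$, \emph{or} $\nu \cdot \be_n > 0$ at every point of $\partial\{u>0\}\setminus\{0\}$.

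In the first alternative, $u$ depends only on $(x_1,\dots,x_{n-1})$, and a Fubini-type slicing argument applied to both terms of the Alt--Caffarelli functional shows that $u$, viewed as a function on $\R^{n-1}$, is a $1$-homogeneous global minimizer there; since $n-1 \le n_{\rm local}^*$, the definition of $n_{\rm local}^*$ yields the conclusion with $\be \cdot \be_n = 0$. For the second alternative, fix a unit $\tau \perp \be_n$ and set $\be_{(\theta)} := \cos\theta\,\be_n + \sin\theta\,\tau$. Since $\partial_{\be_{(\theta)}} u$ is $0$-homogeneous and harmonic in $\{u>0\}$, its restriction to $\Sigma$ is spherically harmonic, with boundary values $\nu \cdot \be_{(\theta)}$ on $\partial\Sigma$ (coming from $\nabla u = \nu$ on the free boundary, itself a consequence of $|\nabla u| = 1$). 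In the second alternative, $\nu \cdot \be_n$ is bounded below by some $c > 0$ on the compact set $\partial\Sigma$, while $|\nu \cdot \tau|\le 1$, so for all $\theta \in (0,\theta_0]$ with $\theta_0$ small, $\nu \cdot \be_{(\theta)} > 0$ on $\partial\Sigma$. The spherical maximum principle together with $0$-homogeneity then gives $\partial_{\be_{(\theta)}} u > 0$ throughout $\{u>0\}$, and strict monotonicity along forward lines in direction $\be_{(\theta)}$ combined with the continuity of $u$ yields $u \in \mathcal G(\be_{(\theta)})$ for all such $\theta$.

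Now set $\theta^* := \sup\{\theta \in [0,\pi) : u \in \mathcal G(\be_{(\theta)})\}$, which is strictly positive by the above and strictly less than $\pi$ unless $u \equiv 0$. By continuity $u \in \mathcal G(\be_{(\theta^*)})$, hence $\partial_{\be_{(\theta^*)}} u \ge 0$ by Lemma~\ref{lem:monotone}. If Lemma~\ref{lem:interm_lem} placed us in its second alternative for this direction, the rotation step above could be iterated to produce $u \in \mathcal G(\be_{(\theta^* + \delta)})$ for some small $\delta > 0$, contradicting maximality of $\theta^*$; hence $\partial_{\be_{(\theta^*)}} u \equiv 0$. The first-alternative argument applied now on $\be_{(\theta^*)}^\perp$ yields $u = (x \cdot \be)_+$ for some $\be \in \be_{(\theta^*)}^\perp$, and the condition $\be \cdot \be_n \ge 0$ is forced by $\partial_n u \ge 0$. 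The main obstacle will be the rotation step itself: one must carefully propagate the strict positivity of $\partial_{\be_{(\theta)}} u$ in the open set $\{u > 0\}$ to the geometric condition $u \in \mathcal G(\be_{(\theta)})$, which amounts to showing that $\{u > 0\}$ is closed under forward translations along $\be_{(\theta)}$; this follows from strict monotonicity along such lines inside the positive set combined with the continuity of $u$, but must be articulated with care to rule out re-entry into $\Lambda(u)$.
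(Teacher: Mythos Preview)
Your proof is correct and follows the same overall strategy as the paper: invoke the dichotomy of Lemma~\ref{lem:interm_lem}, and if the strict-transversality alternative holds for $\be_n$, rotate to a critical direction where the invariance alternative must kick in, then reduce the dimension. The technical implementation differs in two places. First, the paper finds the critical direction in one shot by minimizing $\bar\be\cdot\be_n$ over all $\bar\be\in\mathbb{S}^{n-1}$ with $\bar\be\cdot\nu\ge 0$ on $\partial\{u>0\}\cap\mathbb{S}^{n-1}$, whereas you rotate continuously in a fixed two-plane $\mathrm{span}(\be_n,\tau)$ and take the supremal angle; both work because any direction of invariance suffices for the dimension reduction. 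Second, to pass from the normal condition $\nu\cdot\be>0$ on $\partial\Sigma$ to $u\in\mathcal G(\be)$, the paper uses the implicit function theorem on the sphere (plus homogeneity and Corollary~\ref{cor:closedgraph}), while you use the maximum principle on $\Sigma\subset\mathbb{S}^{n-1}$ for the $0$-homogeneous harmonic function $\partial_{\be}u$, which has positive boundary data $\nu\cdot\be$; this is a clean alternative. Two small remarks: your connectedness assumption on $\Sigma$ is automatic for global minimizers (the paper cites this explicitly), so the ``component by component'' caveat is unnecessary; and your claim $\theta^*<\pi$ is justified not by ``$u\equiv 0$'' but by the fact that $\theta^*=\pi$ would force $u\in\mathcal G(-\be_n)$ and hence $\partial_n u\equiv 0$, contradicting the second alternative at $\theta=0$.
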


%\begin{proof}
%With Lemma \ref{lem:monotone} and Proposition \ref{prop:cont_graph}, we see that $u$ is minimizing. As a result, its free boundary has smooth trace on $\mathbb{S}^{n-1}$ by Lemma \ref{TraceSmoothClassical}. Homogeneity gives that $0\in\partial\CSetC$. The graphicality assumption implies that either $\be_n\in\partial\CSetC$ or $\be_n\in\PosS$. If $\be_n\in\partial\CSetC$, then we can apply Lemma \ref{lem:interm_lem} to conclude that $u$ is independent of $x_n$, and that its restriction to $\R^{n-1}$ is a minimizing cone. The definition of $n_{\rm local}^*$ gives the desired result. 

%The conclusion follows similarly if $-\be_n\in\partial\CSetC$. 

%As a result, it suffices to consider the case when 
%$$
%B_{\delta}(\be_n)\subset\PosS \text{ and }B_\delta(-\be_n)\subset\CSetC
%$$
%for some $\delta>0$. 

%With the smoothness of $\partial\CSetC\cap\mathbb{S}^{n-1}$, we have that $\partial\CSetC$ is a Lipschitz graph in this case. The conclusion follows from Theorem \ref{LipschitzFreeBoundarySmoothClassical}. 
%\end{proof} 

\begin{proof}
Lemma \ref{TraceSmoothClassical} implies that for each $x\in \partial \{u > 0\}\cap \mathbb S^{n-1}$, the unit normal $\nu(x)$ to $\partial\{u > 0\}$ (outward with respect to $\{u = 0\}$) exists and is a continuous function of $x$. The assumption $u\in \mathcal G(\be_n)$ implies that $\be_n \cdot \nu(p) \geq 0$ for all $p \in \partial \{u > 0\}\cap \mathbb S^{n-1}$. By continuity there exists a direction
\[
\be_\circ\in \argmin_{\bar\be \in \mathbb{S}^{n-1}} \left\{\bar\be \cdot\be_n : \bar\be\cdot \nu(x) \geq 0, \ \forall x\in \partial \{u > 0\}\cap \mathbb S^{n-1}\right\}.
\]

We claim that there is a point $p_\circ\in \partial \{u > 0\}\cap \mathbb S^{n-1}$ such that $\be_\circ \cdot \nu(p_\circ) = 0$. If not, then by compactness there is a $\delta > 0$ such that $\be_\circ \cdot \nu(p) \geq \delta$ for all $p \in \partial \{u > 0\}\cap \mathbb S^{n-1}$. This implies that for any $\bar\be \in \mathbb S^{n-1}$ with $\|\bar\be - \be_\circ\| < \delta/2$ we have $\bar\be \cdot \nu(p) \geq \delta/2>0$ for all $p \in \partial \{u > 0\} \cap \mathbb S^{n-1}$, contradicting the minimality of $\be_\circ$.

If $\be_n = \be_\circ$, let $p_\circ\in \partial \{u > 0\} \cap \mathbb S^{n-1}$ be such that $\nu(p_\circ)\cdot \be_n = 0$. Recall that for every globally defined minimizer $u$, $\{u > 0\}$ is  connected (see, e.g. \cite[Theorem 2.2]{DET} or \cite[Theorem 2.3]{ESV22}). Hence, we can apply Lemma~\ref{lem:interm_lem} to the connected component of $B_{1/2}(p_\circ)\cap \{u > 0\}$ with $p_\circ$ on its boundary ($u$ is monotone in the $e_n$ direction, by Lemma~\ref{lem:monotone}), to conclude that $u$ is invariant in the direction $\be_n$ in all of $\R^n$ (by analyticity and connectedness of $\{ u > 0\}$). As a result, the restriction of $u$ into the space perpindicular to $\be_n$ is a minimizing cone in $\mathbb R^{n-1}$. The criticality of $n_{\rm local}^*$ implies that $u$ is a half-plane solution. 

So we may assume $\be_n\cdot \nu(p) > 0$ for all $p \in \partial \{u> 0\}\cap \mathbb S^{n-1}$ and thus $\be_n \neq \be_\circ$.  If we can show that $u\in \mathcal G(\be_\circ)$,   we may argue as above around the point $p_\circ$ (where $ \be_\circ \cdot \nu(p_\circ)= 0$) to conclude that $u$ is a half-plane solution. In order to prove that $u\in \mathcal G(\be_\circ)$, we first note that because $\be_n\cdot \nu(p) > 0$, by homogeneity, and by Lemma \ref{TraceSmoothClassical}, we have that $\partial \{u > 0\}$ is the graph of a Lipschitz function in the $\be_n$ direction (and in fact, $\partial \{u > 0\}\setminus\{0\}$ is a smooth graph). A simple computation shows that for any $\delta > 0$, we have  $(\be_\circ + \delta \be_n)\cdot\nu(x) > 0$ for all $x\in \partial \{u > 0\} \cap \mathbb S^{n-1}$. So by the implicit function theorem, $\partial \{u > 0\}\cap \mathbb S^{n-1}$ is the graph of a smooth function over the equator perpendicular to $\frac{\be_\circ + \delta \be_n}{\|\be_\circ + \delta \be_n\|}$. By homogeneity this implies that $u\in \mathcal G(\frac{\be_\circ + \delta \be_n}{\|\be_\circ + \delta \be_n\|} )$ for all $\delta > 0$. Sending $\delta \downarrow 0$ and invoking Corollary \ref{cor:closedgraph} we are done. 
\end{proof}
 
We then have, by a blow-down argument:
\begin{cor}
\label{cor:main000}
Let $u\in \mathcal{G}(\be_n)$ be a global minimizer to the Alt--Caffarelli functional in $\R^n$ with 
$$n\le n_{\rm local}^*+1.$$ Then $u = (x\cdot \be)_+$  for some  $\be\cdot\be_n \ge 0$. 
\end{cor}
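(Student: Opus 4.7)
The plan is to translate so that a free boundary point sits at the origin, extract half-plane limits at both $\infty$ and $0^+$ by blow-down/blow-up combined with Proposition~\ref{prop:main}, and then use the rigidity of the Weiss monotonicity formula to conclude.

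First, since $u\in \mathcal G(\be_n)$ is a global minimizer whose blow-down is nonzero by Lemma~\ref{lem:blowdown}, the free boundary $\partial\{u>0\}$ is nonempty. Translating---an operation which preserves both the graphicality in $\mathcal G(\be_n)$ and the global minimizing property---I may assume $0\in \partial\{u>0\}$. Applying Lemma~\ref{lem:blowdown} produces a blow-down $u_i(x):=u(r_i x)/r_i\to u_\infty$ locally uniformly along a subsequence of $r_i\uparrow \infty$, where $u_\infty$ is a one-homogeneous global minimizer. Since the subgraph condition is scale-invariant, each $u_i\in \mathcal G(\be_n)$, so Corollary~\ref{cor:closedgraph} gives $u_\infty\in \mathcal G(\be_n)$. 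The dimensional hypothesis $n\le n^*_{\rm local}+1$ then allows me to invoke Proposition~\ref{prop:main}, forcing $u_\infty=(x\cdot \be)_+$ for some $\be\in \mathbb S^{n-1}$ with $\be\cdot \be_n\ge 0$. Running the same scheme at small scales (rescalings $u(s_j x)/s_j$ with $s_j\downarrow 0$, which converge by the standard Lipschitz and nondegeneracy bounds for minimizers) yields a blow-up $\tilde u_\infty$ which is again a one-homogeneous graphical global minimizer, hence a half-plane $(x\cdot \tilde\be)_+$ by Proposition~\ref{prop:main}.

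Both the blow-up and the blow-down at $0$ have the same Weiss energy $\omega_0$, a dimensional constant independent of the half-plane direction by rotational symmetry. By the Weiss monotonicity formula (as invoked in the proof of Lemma~\ref{lem:blowdown}), the function $r\mapsto W(u,0,r)$ is nondecreasing with
\[
\lim_{r\downarrow 0}W(u,0,r) \;=\; \lim_{r\uparrow \infty}W(u,0,r) \;=\; \omega_0.
\]
Therefore $W(u,0,r)\equiv \omega_0$ for all $r>0$, and the rigidity statement in Weiss' formula forces $u$ to be one-homogeneous about the origin. Consequently $u$ itself is a one-homogeneous graphical global minimizer, and a final application of Proposition~\ref{prop:main} identifies $u=(x\cdot \be)_+$ for some $\be$ with $\be\cdot \be_n\ge 0$.

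The main conceptual step is the double use of Proposition~\ref{prop:main}---at both the small-scale and the large-scale blow-up---which turns Weiss' monotonicity into a rigid identification. The technical verifications (that graphicality survives both translations by $(v',v_n)$ and dilations $x\mapsto x/s_j$, since $\Lambda(u)$ remains a subgraph after either operation; that the Weiss functional at the origin is well-defined; and that $u$ is locally Lipschitz so the requisite blow-ups/blow-downs exist) are standard consequences of the minimizing property supplied by Theorem~\ref{MinimizingClassicalIntro} together with Lemma~\ref{lem:visclip}.
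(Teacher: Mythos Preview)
Your proof is correct and takes a genuinely different route from the paper's. The paper blows down to a half-plane via Proposition~\ref{prop:main} and then \emph{iterates} the improvement-of-flatness Lemma~\ref{LemIOFClassical} at large scales to pull the half-plane structure down to every compact set. You instead translate a free boundary point to the origin, match the Weiss energies of the blow-up and the blow-down (both half-planes, by two applications of Proposition~\ref{prop:main}), and invoke the equality case of Weiss' monotonicity formula to force one-homogeneity directly. Your argument is cleaner in that it bypasses Lemma~\ref{LemIOFClassical} entirely; the price is that you must anchor at a free boundary point and appeal to the rigidity clause in Weiss' formula, neither of which the paper needs. One small point: the inference ``blow-down nonzero $\Rightarrow$ free boundary nonempty'' is not immediate---you also need $\{u=0\}\neq\emptyset$, which follows because a positive global minimizer would be harmonic with bounded gradient (Lemma~\ref{lem:visclip}), hence affine, hence constant, contradicting minimality. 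This is routine but worth a sentence.
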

\begin{proof}
Consider the rescalings
\[
u_R(x) = \frac{u(Rx)}{R}
\]
as $R\to \infty$. By Lemma \ref{lem:blowdown}, we have
\[
u_{R_i} \to v\qquad\text{locally uniformly}
\]
along a subsequence $R_i\uparrow \infty$, where $v$ is some homogeneous minimizer to the one-phase problem.  With Corollary \ref{cor:closedgraph} and  Proposition~\ref{prop:main}, we have 
$$
v = (x\cdot \be')_+
$$ 
for some $\be' \in \mathbb{S}^{n-1}$.

Given small $\eps>0$, we have $$\|u_{R_i}-v\|_{L^\infty(B_1)}<\eps\qquad \text{for $i$ large enough}.$$ From here, we iterate Lemma \ref{LemIOFClassical} to conclude
$
|u_{R_i}-v_k|\le(\frac{r_0}{2})^{k}\eps \text{ in $B_{r_0^k}$}
$
where each $v_k$ is a half-plane solution. That is, 
$
|u-v_k|\le R_i(\frac{r_0}{2})^{k}\eps \text{ in $B_{R_ir_0^k}$}.
$
Choosing $R_i$ and $k$ large enough, we conclude 
$$
\|u-v_k\|_{L^\infty(B_1)}\le\eps.
$$
Since $\eps$ is arbitrary and the set of half-plane solutions compact, we conclude $u$ is a half-plane solution in $B_1$. A similar argument can be used to show that $u$ is a half-plane solution in any compact subset of $\R^n$. That $\be \cdot \be_n \geq 0$ follows immediately the fact that $u \in \mathcal G(\be_n)$. 
\end{proof}

Combining the previous results we directly get Theorem \ref{thm:main0}:

\begin{proof}[Proof of Theorem \ref{thm:main0}]
Thanks to Theorem~\ref{MinimizingClassicalIntro}, $u$ is a global minimizer. We are now done by Corollary~\ref{cor:main000}. 
\end{proof}
 
And we also get Corollary~\ref{cor:semilinear}:
\begin{proof}[Proof of Corollary~\ref{cor:semilinear}]
Suppose that $u$ is a solution to $\Delta u=f(u)$. The condition \eqref{eq:cond_min} and  $\partial_{x_n} u > 0$ implies that $u$ is a minimizer of the corresponding energy functional. This can be proven by constructing a foliation. In fact, the same proof used in \cite[Theorem 2.4]{CP18} works in this context, where the condition \eqref{eq:cond_min} ensures that (large) translations of $u$ are completely above or below a potential minimizing competitor on a given compact set $K$ (see also the proof of \cite[Theorem 4.4]{AAC01}).

The result  is now a consequence of \cite{AS22}. We use
\cite[Proposition 5.1]{AS22}  to obtain that an appropriate rescaling is arbitrarily close to a global solution to the one-phase problem. Since the graphicality condition in Definition~\ref{defi.2} passes well to the limit (see also \cite[Lemma~5.2]{AS22}), thanks to our classification result in Corollary~\ref{cor:main000} we are done by applying \cite[Theorem 1.4]{AS22}.
\end{proof}

\section{Preliminaries and notations: the thin case}\label{sec:thin}
\label{sec:5}

In this section, we collect some preliminary facts about solutions to the thin one-phase problem \eqref{eq:visc_sol_s}. 

We begin with the definition of viscosity solutions to \eqref{eq:visc_sol_s}, see, e.g. \cite{DR12} or \cite{DS15b}, which parallels the classical definition (recall Definitions~\ref{defi:comp_sol} and~\ref{DefViscositySolutionsClassical}). 

In the following, we denote by $F(u)$ the free boundary of $u\ge 0$ in $\Omega$, which is the boundary of a set in $\{x_{n+1} = 0\}$ (with respect to its relative topology):
 \[
F(u) := \partial_{\R^n} \left(\{u > 0\}\cap \{x_{n+1} = 0\}\right)\cap \Omega,
 \]
 and we also denote 
 \[
 \mathbb{S}^n_0 :=\{\be \in \mathbb{S}^n : \be_{n+1} = 0\} = \{\be = (\be', \be_{n+1}) \in \R^{n}\times \R : |\be'| = 1,\ \be_{n+1} = 0\}. 
 \]
Finally, recall from \eqref{HalfSpaceSolutionThin} the one-phase solution: $$U(x_n, y) := \frac{1}{\sqrt2}\sqrt{x_n+\sqrt{x_n^2+y^2}}.$$
%. We note that this is different that the most commonly used definition in the literature, given in, e.g.  \cite{DR12}, but is equivalent by \cite[Corollary 2.9]{DSS14} (see also \cite[Definition 2.7]{DSS14}) and  To comport with the pre-existing literature, this is a little more convoluted than the definition in the classical sense but soon we will introduce a simpler condition to work with:

\begin{defi}\label{DefViscositySolutionThin1}
Let $u\in C(\Omega)$ for some domain $\Omega\subset \R^{n+1}$, $u \ge 0$ in $\Omega$, even with respect to the plane $\{x_{n+1} = 0\}$: 
\begin{enumerate}[leftmargin=*,label=(\roman*)]
\item We say that $u$ is a (strict) \textit{comparison subsolution} to the thin one-phase problem \eqref{eq:visc_sol_s} if  
 \[
u\in C^2(\PosS),\qquad \Delta u \ge 0\quad\text{ in }\quad \PosS,\]
the free boundary  $F(u)$  is a $C^2$ mani\-fold, and for any $x_\circ \in F(u)$ there exists a $\alpha(x_\circ) > 1$ such that, denoting $z = (z', z_{n+1})\in \R^n\times \R$, 
  $$
 u(x_\circ + z) =\alpha(x_\circ)U(z'\cdot \nu, z_{n+1}) +o(|z|^{1/2}),$$
where $\nu\in \mathbb{S}^n_0$ is the inward normal to $F(u)$ at $x_\circ$, and $U(x_n, y)$ is given by \eqref{HalfSpaceSolutionThin}.  

\item We say that $u$ is a (strict) \textit{comparison supersolution} to the thin one-phase problem \eqref{eq:visc_sol_s} if  
 \[
u\in C^2(\PosS),\qquad \Delta u \le 0\quad\text{ in }\quad \PosS,\]
the free boundary  $F(u)$  is a $C^2$ mani\-fold, and for any $x_\circ \in F(u)$ there exists a $\alpha(x_\circ) < 1$ such that, denoting $z = (z', z_{n+1})\in \R^n\times \R$, 
  $$
 u(x_\circ + z) =\alpha(x_\circ)U(z'\cdot \nu, z_{n+1}) +o(|z|^{1/2}),$$
where $\nu\in \mathbb{S}^n_0$ is the inward normal to $F(u)$ at $x_\circ$, and $U(x_n, y)$ is given by \eqref{HalfSpaceSolutionThin}.  
%\item Similarly, we say that $u$ is a (strict) \textit{comparison supersolution} to the thin one-phase problem \eqref{eq:visc_sol_s} if  
%$$
%\Delta u\le 0\text{ in }\{u>0\}\cap\Omega,
%$$ 
%the free boundary, $\partial \{u > 0\}\cap \{x_{n+1} = 0\}\cap \Omega$, is $C^2$ and for every $x_\circ \in \partial \{u > 0\}\cap \{x_{n+1} = 0\}\cap \Omega$ there exists a $\alpha(x_\circ) < 1$ such that 
% 
% $$
% u((x_\circ, 0) + (t,z)) =\alpha(x_\circ)U(t\cdot \nu(x_\circ), z) +o(|(t,z)|^{1/2})$$
%where $\nu(x_\circ)$ is the inward pointing unit normal to $\partial \{u > 0\}\cap \{x_{n+1} =0\}$ (thought of a subset of $\{x_{n+1} = 0\}$) and where $U(t,z) = \sqrt{r} \cos(\theta/2)$ where $r = \sqrt{t^2 +z^2}$ and $\theta = \arctan(z/t)$.  
\end{enumerate}
\end{defi}

As in the classical case, we use these comparison solutions as test functions to define a \emph{viscosity solution}:

\begin{defi}
\label{DefViscositySolutionThin}
Let $u\in C(\Omega)$ for some domain $\Omega\subset \R^{n+1}$, $u \ge 0$ in $\Omega$, even with respect to the plane $\{x_{n+1} = 0\}$. We say that $u$ is a \emph{viscosity solution} to the thin one-phase problem \eqref{eq:visc_sol_s} if  
$$
\Delta u = 0\quad \text{ in }\quad \{u>0\}\cap\Omega,
$$ 
and any strict comparison subsolution (resp. supersolution) cannot touch $u$ from below (resp. from above) at a free boundary point $x_\circ\in F(u)$. 
\end{defi} 

In the previous definition, we say that a strict comparison subsolution $v$ touches from below $u$ at a free boundary point $x_\circ\in F(u)$ if $x_\circ\in F(v)$ and $v \le u$ in a neighborhood of $x_\circ$.

As in the classical case we want to define the sup/inf-convolutions. Note that in this setting   the neighborhoods over which we are taking the supremum and infimum are ``thin", 

\begin{defi}\label{Definitionthinsupconv}
For a domain $\Omega \subset \mathbb R^{n+1}$ (even with respect to $\{x_{n+1} = 0\}$) and $t > 0$, define $$\Omega_{t,\mathrm{thin}} := \{(x',y)\in \Omega : (x', y+\tau) \in \Omega\quad\text{for all}\quad \tau \in (-t, t)\}.$$ For $u \in C(\Omega)$, its \emph{$t$-sup-convolution} is defined in $\Omega_{t, {\rm thin}}$ as $$\overline{u}_t(x,y):= \sup_{\{(x', y): |x-x'| < t\}} u(x',y).$$
Its \emph{$t$-inf-convolution} is defined on $\Omega_{t, {\rm thin}}$ as $$\underline{u}_t(x,y):=\inf_{\{(x', y):  |x-x'| < t\}} u(x',y).$$
\end{defi}

As in the classical case, these convolutions satisfy good comparison properties (the proof of this lemma follows as in the classical case once one has  \cite[Lemma 7.5]{DS12}, see also \cite[Corollary 2.9]{DSS14}).

\begin{lem}
\label{PropertyOfConvolutionsThin}
Let $u\in C(\Omega)$ be a viscosity solution to the thin one-phase problem \eqref{eq:visc_sol_s} in $\Omega$. For $t>0$, let $\overline{u}_t$ and $\underline{u}_t$ denote its sup-convolution and inf-convolution as in Definition \ref{Definitionthinsupconv}. Then:

\begin{itemize}
\item $\overline{u}_t$ satisfies $\Delta\overline{u}_t \ge 0$ in $\{\overline{u}_t > 0\}\cap \Omega_{t, {\rm thin}}$ and, for each $x_\circ\in F(\overline{u}_t)$, there is a point $p = (p', 0)$ such that 
\[
B'_t(p) \subset \{\overline{u}_t > 0\}\cap \{x_{n+1} = 0\} \quad\text{and}\quad x_\circ \in \partial B_t(p),
\]
and 
\[
\overline{u}_t(x', 0) \ge \langle x'-x_\circ', \nu\rangle^{1/2}_+ + o(|x'-x'_\circ|^{1/2})
\]
for $x'$ near $x_\circ'$, where $\nu :=\frac{1}{t}(p'-x'_\circ)$.
\item $\underline{u}_t$ satisfies $\Delta\underline{u}_t \le 0$ in $\{\underline{u}_t > 0\}\cap \Omega_{t, {\rm thin}}$ and, for each $x_\circ\in F(\underline{u}_t)$, there is a point $p = (p', 0)$ such that 
\[
B'_t(p) \subset \{\underline{u}_t = 0\} \cap \{x_{n+1} = 0\} \quad\text{and}\quad x_\circ \in \partial B_t(p),
\]
and 
\[
\underline{u}_t(x', 0) \le \langle x'-x'_\circ, \nu\rangle^{1/2}_+ + o(|x'-x'_\circ|^{1/2})
\]
for $x'$ near $x_\circ'$, where $\nu :=\frac{1}{t}(x'_\circ-p')$.
\end{itemize}
\end{lem}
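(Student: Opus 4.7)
The plan is to mimic the classical proof from Section~2.3 of \cite{CS05}, substituting the classical boundary expansion by the thin-specific one in \cite[Lemma~7.5]{DS12} (cf.\ \cite[Corollary~2.9]{DSS14}). The argument for $\underline{u}_t$ is completely symmetric to the one for $\overline{u}_t$, so I will focus on the sup-convolution.

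The subharmonicity in the positive set is essentially automatic: locally near any $(x',y)\in\{\overline{u}_t>0\}\cap\Omega_{t,\mathrm{thin}}$, the function $\overline{u}_t$ is the pointwise supremum of the compact family $\{u(\cdot+h,\cdot)\}_{|h|\le t}$ of horizontal translates, each of which is harmonic wherever it is positive. Since the supremum of subharmonic functions is subharmonic, this gives $\Delta\overline{u}_t\ge 0$ in $\{\overline{u}_t>0\}\cap\Omega_{t,\mathrm{thin}}$. For the tangent ball, the horizontal nature of the sup-convolution means $\{\overline{u}_t>0\}\cap\{x_{n+1}=0\}=\{x'\in\R^n:\mathrm{dist}(x',P)<t\}$, where $P:=\{u>0\}\cap\{x_{n+1}=0\}$. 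At $x_\circ\in F(\overline{u}_t)$ we therefore have $\mathrm{dist}(x'_\circ,P)=t$, realized at some $p'\in\overline{P}$. A nearest point cannot lie in the interior of $P$ (moving from $p'$ slightly toward $x'_\circ$ would strictly decrease the distance), so $p'\in F(u)$. For any $y'\in B'_t(p')$ I pick $q'\in P$ close enough to $p'$ that $|y'-q'|<t$, which yields $\overline{u}_t(y',0)\ge u(q',0)>0$; hence $B'_t(p')\subset\{\overline{u}_t>0\}$ with $x_\circ\in\partial B_t(p)$. With $\nu=(p'-x'_\circ)/t$, the ball $B'_t(x'_\circ)\times\{0\}$ sits inside the thin zero set of $u$ and is tangent to $F(u)$ at $p'$ with inward unit normal $\nu$. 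The inf-convolution case is analogous, with $\{\underline{u}_t=0\}\cap\{x_{n+1}=0\}$ being the closed $t$-neighborhood of $Z:=\{u=0\}\cap\{x_{n+1}=0\}$ and $p'\in Z$ realizing the distance $t$ from $x'_\circ$.

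For the asymptotic expansion, since $u$ is a viscosity solution admitting the thin tangent ball $B'_t(x'_\circ)$ from the zero side at $p'\in F(u)$, \cite[Lemma~7.5]{DS12} (see also \cite[Corollary~2.9]{DSS14}) yields the sharp one-sided lower expansion
\[
u(p'+z',z_{n+1})\ge U(z'\cdot\nu,z_{n+1})+o(|(z',z_{n+1})|^{1/2}).
\]
Writing $\xi=x'-x'_\circ$ and choosing $h=(1-\delta)(p'-x'_\circ)$ with $|h|<t$, the definition of $\overline{u}_t$ gives
\[
\overline{u}_t(x',0)\ge u(x'+h,0)=u(p'+\xi-\delta t\nu,0).
\]
Plugging in the previous expansion and letting $\delta=\delta(|\xi|)\downarrow 0$ slowly (so that $(\delta t)^{1/2}$ is absorbed into $o(|\xi|^{1/2})$) then gives $\overline{u}_t(x',0)\ge\langle\xi,\nu\rangle_+^{1/2}+o(|\xi|^{1/2})$ along $\{x_{n+1}=0\}$. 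The inf-convolution bound follows symmetrically from $\underline{u}_t(x',0)\le u(x'+h,0)$ and the upper expansion at the tangent-from-the-positive-side free boundary point.

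The hard part will be extracting this sharp one-sided expansion from only the viscosity property: the $|z|^{1/2}$ half-plane behavior at a free boundary point admitting a one-sided thin tangent ball is genuinely nontrivial, and is exactly what the thin-specific machinery of \cite{DS12,DSS14} supplies. The remaining subharmonicity, the geometric tangent ball identification, and the horizontal translation that transfers the expansion to $\overline{u}_t$ are all straightforward adaptations of the classical argument.
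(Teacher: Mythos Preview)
Your approach is exactly what the paper indicates: it does not give a proof but states that ``the proof of this lemma follows as in the classical case once one has \cite[Lemma~7.5]{DS12}, see also \cite[Corollary~2.9]{DSS14}''. You have correctly identified the three ingredients (subharmonicity of the sup of translates, the nearest-point geometry giving the thin tangent ball, and the transfer of the expansion via horizontal translation) and correctly isolated the one nontrivial input, namely the one-sided $|z|^{1/2}$ expansion at a free boundary point with a thin tangent ball, as coming from \cite{DS12,DSS14}.

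One minor simplification: your $\delta$-trick in the last step is unnecessary. Since $u$ is continuous, the supremum over the open thin ball $\{|h|<t\}$ agrees with the supremum over its closure, so you may take $h=t\nu=p'-x'_\circ$ directly and obtain
\[
\overline{u}_t(x',0)\ge u(x'+t\nu,0)=u(p'+\xi,0)\ge \langle\xi,\nu\rangle_+^{1/2}+o(|\xi|^{1/2}),
\]
without having to tune $\delta(|\xi|)$. Your version is not wrong, just slightly more laborious.
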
 

We turn now to the regularity of viscosity solutions. Corresponding to Lemma \ref{lem:visclip}, we have the following (with an analogous proof):

\begin{lem}
\label{lem:viscC1/2}
Let $u$ be a solution in $\R^{n+1}$ to the thin one-phase problem \ref{eq:visc_sol_s}. Then, there is a dimensional constant $C$ such that 
$$
[ u]_{C^{1/2}(\R^{n+1})}\le C \text{ in $\R^{n+1}$.} 
$$
\end{lem}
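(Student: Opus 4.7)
The lemma asserts a universal $C^{1/2}$ seminorm bound for global viscosity solutions of the thin one-phase problem, matching the scaling of the half-plane solution $U$ from \eqref{HalfSpaceSolutionThin}. The plan is to mirror the architecture of the classical analogue Lemma 11.19 in \cite{CS05}, combining a purely local $C^{1/2}$ regularity estimate with a global $R^{1/2}$ growth bound coming from the free boundary condition.

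First, I would invoke the local $C^{1/2}$ regularity theory for viscosity solutions of \eqref{eq:visc_sol_s}, which is by now standard in the thin one-phase setting (see, e.g., \cite{DR12, DS12, DSS14, EKPSS20}): there is a dimensional $C_0$ such that whenever $u$ solves \eqref{eq:visc_sol_s} in $B_2 \subset \R^{n+1}$,
$$[u]_{C^{1/2}(B_1)} \le C_0 \bigl( \|u\|_{L^\infty(B_2)} + 1 \bigr).$$
This reduces the task to producing a global growth estimate of the form $\|u\|_{L^\infty(B_R(x_0))} \le C R^{1/2}$ for every $x_0 \in \R^{n+1}$ and $R \ge 1$ with dimensional $C$; applying the localized bound in $B_2(x_0)$ and taking supremum over $x_0$ would then yield the claim.

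To establish the growth estimate, I would argue by contradiction. The trivial case in which $u$ does not touch the thin free boundary anywhere is handled by Liouville (after using the evenness in $y$ to extend harmonicity across $\{y = 0\}$ on the positivity side). In the interesting case $\Lambda(u) \ne \emptyset$, suppose $\sup_{B_R(x_0)} u > M R^{1/2}$ for some large dimensional $M$ to be chosen. I would first regularize $u$ via the sup-convolution of Definition \ref{Definitionthinsupconv}, obtaining a subsolution with tangent balls at every free boundary point (Lemma \ref{PropertyOfConvolutionsThin}), and then slide a family of strict half-plane comparison supersolutions of the form $\alpha U(\cdot)$ with $\alpha < 1$, as in Definition \ref{DefViscositySolutionThin1}(ii), from above toward $u$. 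By the strong maximum principle for harmonic functions the first contact cannot lie in the common positivity set; the viscosity property of Definition \ref{DefViscositySolutionThin} rules out contact at a free-boundary point of $u$; and by a careful choice of the sliding direction together with the strictness $\alpha < 1$, contact at $\Lambda(u)$ strictly inside the positivity set of the barrier is excluded as well. The resulting contradiction for $M$ large gives the growth bound.

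I expect the second step to be the main obstacle: one must produce the sharp half-power growth rate without access to the Weiss monotonicity formula or the nondegeneracy property, both of which are the usual tools in the variational setting. The sliding-barrier argument must be arranged so that the family of half-plane comparison supersolutions remains genuinely strict (i.e., $\alpha < 1$ with a definite gap), so that the viscosity property of Definition \ref{DefViscositySolutionThin} is strong enough to forbid first-contact at a free-boundary point. Modulo this technical subtlety, the argument closely parallels both the classical proof in \cite{CS05} and the sliding scheme carried out in Section \ref{sec:3}.
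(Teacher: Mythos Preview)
The paper offers no proof beyond the parenthetical ``(with an analogous proof)'', pointing to Lemma~\ref{lem:visclip} and thence to \cite[Lemma~11.19]{CS05}. Your overall architecture --- a local $C^{1/2}$ estimate combined with a global $R^{1/2}$ growth bound --- is exactly that analogue, so at the level of strategy you match the paper.

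There is, however, a genuine reversal in your Step~2. The growth bound $u\le C R^{1/2}$ is an \emph{upper} bound, so it must come from the \emph{supersolution} side of $u$: strict comparison \emph{subsolutions} cannot touch $u$ from \emph{below} at a free boundary point. Concretely, if $x_0\in\{u>0\}\cap\{y=0\}$ with $d:=\operatorname{dist}(x_0,\Lambda(u))$ satisfies $u(x_0)>M d^{1/2}$ for $M$ large, then Harnack in $B_d(x_0)$ lets one place a translate of $\beta\,U$ with $1<\beta\ll M$ below $u$ near the nearest free boundary point $\bar x$, touching at $\bar x$; this is a strict subsolution touching from below, which Definition~\ref{DefViscositySolutionThin} forbids. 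Your proposal instead takes the sup-convolution (which is a \emph{subsolution}, giving \emph{interior} tangent balls and a \emph{lower} expansion by Lemma~\ref{PropertyOfConvolutionsThin}) and slides strict supersolutions $\alpha U$ with $\alpha<1$ from above. But $\alpha U$ grows like $\alpha|x|^{1/2}$, strictly slower than the putative growth $M|x|^{1/2}$ of $u$, so the barrier is never above $u$ at large scales and the sliding cannot even be initialized; and in any case there is no contradiction in a supersolution touching a subsolution from above. Swap the roles --- use the supersolution property of $u$ directly (or the inf-convolution if you want exterior tangent balls) and push subsolutions $\beta U$ with $\beta>1$ from below --- and the argument goes through, exactly as in \cite[Lemma~11.19]{CS05}.
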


In terms of the free boundary, we have an improvement of flatness lemma \cite[Theorem 7.1]{DR12}:

\begin{lem}\label{l:improvethinflat}
Let $u$ be a viscosity solution to the thin one-phase problem \eqref{eq:visc_sol_s} in $B_1$, and assume that $0 \in F(u)$ and   $$U(x_n-\eps, x_{n+1}) \leq u(x) \leq U(x_n + \eps, x_{n+1}), \quad\text{for all}\quad x\in B_1,$$
where $U(x_n, x_{n+1})$ is given by \eqref{HalfSpaceSolutionThin}. 

There are dimensional constants  $\bar {\eps} > 0$ and $r > 0$ such that if $\eps  \leq \bar {\eps}$, then  we can find $\be\in \mathbb{S}^{n-1}$ such that 
\[
U(x\cdot \be -\eps r / 2,x_{n+1}) \leq u(x) \leq U(x\cdot \be + \eps r /2 , x_{n+1}), \quad\text{for all}\quad x \in B_r.
\]
\end{lem}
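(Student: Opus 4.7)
The plan is to follow the standard improvement-of-flatness compactness scheme, as in the paper of De~Silva--Roquejoffre \cite{DR12} that is cited for this statement, adapting the strategy used in the classical setting (cf.\ Lemma~\ref{LemIOFClassical}). First I would argue by contradiction: assume that for some $r>0$ to be chosen, there are sequences $\eps_k\downarrow 0$ and viscosity solutions $u_k$ to \eqref{eq:visc_sol_s} in $B_1$ with $0\in F(u_k)$ and
\[
U(x_n-\eps_k,x_{n+1})\le u_k(x)\le U(x_n+\eps_k,x_{n+1})\quad\text{in }B_1,
\]
for which the conclusion fails for every $\be\in\mathbb{S}^{n-1}$. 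The basic object is the normalized error (``linearization'')
\[
\tilde u_k(x):=\frac{u_k(x)-U(x_n,x_{n+1})}{\eps_k\,U_{x_n}(x_n,x_{n+1})},
\]
defined (and suitably extended) in $B_{1/2}\cap\{U>0\}$. Equivalently, one tracks, for each $x'\in B'_{1/2}$, the $\eps_k$-normalized horizontal displacement of $F(u_k)$ from the hyperplane $\{x_n=0\}$.

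The crucial technical step is a partial Harnack inequality: if $\tilde u_k$ is trapped in a strip of height $H$ on $B_1$, then on $B_{1/2}$ the oscillation decreases to $(1-c)H$ for some dimensional $c>0$, provided $\eps_k$ is small. This is proved by constructing explicit radial barriers based on perturbations of $U$ of the form $U(x_n\pm c\eps_k\phi(x),x_{n+1})$, where $\phi$ is a carefully chosen subharmonic/superharmonic test function compatible with the half-normal derivative condition, and then invoking Definition~\ref{DefViscositySolutionThin} to slide them against $u_k$; this is where the thin analogue of the sup/inf-convolution machinery of Lemma~\ref{PropertyOfConvolutionsThin} intervenes, and it is the delicate part of the argument because the square-root degeneracy of $U$ at the free boundary makes standard elliptic Harnack inequalities inapplicable. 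Iterating the partial Harnack inequality gives uniform H\"older estimates for $\tilde u_k$.

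With the H\"older bound in hand, I would extract a subsequential locally uniform limit $\tilde u_\infty$ in $B_{1/2}\cap\{x_n>0\}$ (after symmetrizing with respect to $\{x_{n+1}=0\}$, one reduces to a half-space problem). The limit $\tilde u_\infty$ solves the linearized thin one-phase problem:
\[
\left\{\begin{array}{rcll}
\mathrm{div}(x_n\nabla\tilde u_\infty) &=& 0 &\quad\text{in }B_{1/2}\cap\{x_n>0\},\\
\lim_{x_n\downarrow0} x_n\,\partial_{x_n}\tilde u_\infty &=& 0 &\quad\text{on }B'_{1/2}\cap\{x_{n+1}=0\},
\end{array}\right.
\]
obtained by formally differentiating the free boundary relation in \eqref{eq:visc_sol_s} at $U$. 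Standard boundary regularity for this degenerate-elliptic equation (equivalently, harmonic extensions of the $1/2$-fractional Laplacian) furnishes a $C^{1,\alpha}$-estimate: there exists $\be\in\mathbb{S}^{n-1}$ and $C>0$ depending only on $n$ such that
\[
|\tilde u_\infty(x)-x'\cdot\be|\le C|x|^{1+\alpha}\quad\text{for }x\in B_{1/2}.
\]

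Finally, choosing $r$ small (so that $Cr^{1+\alpha}\le r/4$), translating this pointwise linear approximation of $\tilde u_k$ back into a geometric statement about $u_k$ (using that $U$ is real-analytic in $\{x_n>0\}$ and uniformly non-degenerate away from $F$), and using the compactness of the set of flat solutions, I would obtain
\[
U(x\cdot\be-\eps_k r/2,x_{n+1})\le u_k(x)\le U(x\cdot\be+\eps_k r/2,x_{n+1})\quad\text{in }B_r
\]
for all $k$ large enough, contradicting the standing assumption. The main obstacle is step two, the partial Harnack inequality, since the construction of the perturbed comparison functions $U(x_n\pm c\eps_k\phi,x_{n+1})$ that respect Definition~\ref{DefViscositySolutionThin} is significantly more intricate than in the classical case; this is exactly the content of \cite[Section 6]{DR12}, which I would directly invoke.
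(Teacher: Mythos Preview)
The paper does not give its own proof of this lemma; it simply quotes it as \cite[Theorem 7.1]{DR12}. Your sketch is exactly the compactness/partial-Harnack scheme of that reference (contradiction, normalized error $\tilde u_k$, partial Harnack $\Rightarrow$ H\"older compactness, limit solves the linearized problem, $C^{1,\alpha}$ regularity of the limit, choose $r$ small and conclude), so there is nothing to compare --- you are reproducing the cited argument. One small correction: the linearized problem in \cite{DR12} carries the weight $U_{x_n}^2$ (equivalently, after a change of variables, a weight in the $x_{n+1}$-variable coming from the Caffarelli--Silvestre extension), not the weight $x_n$ you wrote; but this does not affect the structure of the argument.
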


A particular class of solutions to the thin problem are minimizers of an appropriate energy functional, \eqref{eq:JOm_frac}:

 \begin{defi}
 \label{DefMinimizersthin}
 For $\Omega\subset\R^{n+1}$ and $u\in H^1(\Omega)$, both even with respect to $\{x_{n+1} = 0\}$, we say that $u$ is a \emph{minimizer} of the thin Alt--Caffarelli functional \eqref{eq:JOm_frac} in $\Omega$ if 
$u\ge0$ in $\Omega$, and 
\[
\mathcal{J}^0_\Omega(u) \le \mathcal{J}^0_\Omega(v)\qquad\text{for all}\quad v\ge0, \quad v-u\in H^1_0(\Omega). 
\]

For $u\in H^1_{\rm loc}(\R^{n+1})$ with $u\ge 0$ and even with respect to $\{x_{n+1} = 0\}$, we say that it is a \textit{global minimizer} in $\R^{n+1}$ if
it is a minimizer in $B_R$ for every $R>0$.
\end{defi}

As in the classical setting, minimizers have nondegeneracy and compactness properties that allow for additional arguments. In particular, we can execute a blow-down argument using a Weiss-type monotonicity formula (see, e.g. \cite{All}):
 
\begin{lem}\label{lem:nonlocalblowdown}
Let $u$ be a global minimizer of the thin Alt--Caffarelli functional in $\mathbb R^{n+1}$.
For a sequence $r_i \uparrow \infty$, define 
\[
u_i(x) := \frac{u(r_ix)}{r_i^{1/2}}.
\]
 Then, perhaps passing to a subsequence, we can find a nonzero $1/2$-homogeneous global minimizer $u_\infty$ such that $$u_i \rightarrow u_\infty\qquad \text{locally uniformly in}\;\; \mathbb R^{n+1}$$ with $$\chi_{\{u_i = 0\}\cap \{x_{n+1}=0\}} \rightarrow \chi_{\{u_\infty = 0\}\cap \{x_{n+1}=0\}}\;\;\text{in}\;\; L^1_{\mathrm{loc}}(\{x_{n+1} = 0\}),$$ and 
 $$F(u_i) \rightarrow F(u_\infty)\; \text{ locally in the Hausdorff distance sense.}$$
\end{lem}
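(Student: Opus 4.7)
The plan is to mimic the proof of Lemma \ref{lem:blowdown}, replacing each ingredient from the classical theory by its thin counterpart. The scaling $u_i(x) = u(r_i x)/r_i^{1/2}$ is precisely the one that preserves both the PDE (the ambient Laplace equation is scale invariant) and the free-boundary condition $\partial_\nu^{1/2} u = 1$, so each $u_i$ is again a global minimizer of $\mathcal{J}^0$ on $\mathbb{R}^{n+1}$, and this minimality is the main mechanism that makes the limit well-behaved.

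First I would extract a subsequential locally uniform limit. By Lemma \ref{lem:viscC1/2} the family $\{u_i\}$ is uniformly $C^{1/2}$ on compact sets, and $u_i(0) = 0$ if one normalizes so that $0 \in F(u)$; in any case the values $u_i$ are uniformly bounded on compacts. Arzel\`a–Ascoli then yields a subsequence (not relabeled) with $u_i \to u_\infty$ locally uniformly in $\mathbb{R}^{n+1}$. Standard stability of minimizers for the thin one-phase functional (interior $H^1$ estimates plus lower semicontinuity of the surface term under $L^1$ convergence of the contact sets, as in \cite{AC81, DR12}) shows that $u_\infty$ is itself a global minimizer of $\mathcal{J}^0$ in the sense of Definition \ref{DefMinimizersthin}. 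That $u_\infty$ is nonzero (assuming $u$ itself is nonzero, otherwise there is nothing to prove) follows from the thin nondegeneracy estimate for minimizers: if $x_\circ \in F(u)$ then $\sup_{B_r(x_\circ)} u \ge c r^{1/2}$, so $\sup_{B_1} u_i \ge c$ uniformly in $i$.

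Next I would establish the convergence of the contact sets and of the free boundaries. The upper semicontinuity $\limsup \{u_i = 0\}\cap \{x_{n+1}=0\} \subset \{u_\infty = 0\}\cap \{x_{n+1}=0\}$ is immediate from uniform convergence. The reverse inclusion (in the sense of $L^1_{\rm loc}$-convergence of the characteristic functions and of Hausdorff convergence of the free boundaries) is the technically delicate part and requires nondegeneracy: a point in $F(u_\infty)$ cannot be the limit of points in the positivity set of $u_i$ that stay uniformly away from $F(u_i)$, because uniform positivity in a ball would force $u_\infty > 0$ there by uniform convergence. These arguments are by now standard in the thin setting, see the analogous statements in \cite{DR12, All}.

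Finally I would upgrade $u_\infty$ to a $1/2$-homogeneous cone via the thin Weiss monotonicity formula
\[
W(u,R) = \frac{1}{R^n}\int_{B_R}|\nabla u|^2 \, dx + \frac{\lambda}{R^n}\mathcal{H}^n\bigl(\{u=0\}\cap\{x_{n+1}=0\}\cap B_R\bigr) - \frac{1}{2R^{n+1}}\int_{\partial B_R}u^2\, d\mathcal{H}^n,
\]
which is monotone nondecreasing in $R$ along minimizers (see, e.g., \cite{All}). Using the universal $C^{1/2}$ bound from Lemma \ref{lem:viscC1/2} and the trivial bound $\mathcal{H}^n(\cdot) \le C R^n$, one obtains $W(u,R) \le C$ uniformly in $R$, so $\lim_{R\to\infty}W(u,R)$ exists and is finite. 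Passing to the blow-down limit, the rescaled Weiss energies $W(u_i, \rho) = W(u, r_i\rho)$ converge to this same constant for every $\rho > 0$, which forces $W(u_\infty, \cdot)$ to be constant in $\rho$ and hence, by the rigidity case of the monotonicity formula, $u_\infty$ to be $1/2$-homogeneous. The main obstacle I anticipate is justifying the convergence of the energy terms at the blow-down scale (especially the surface term) with enough precision to invoke the equality case of the Weiss formula; this is where the $L^1_{\rm loc}$ convergence of the contact sets established in the previous step becomes essential.
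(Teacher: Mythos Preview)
Your proposal is correct and follows exactly the approach the paper takes: the paper's proof simply refers back to the classical blow-down argument (Lemma~\ref{lem:blowdown}), substituting the thin Weiss monotonicity formula from \cite{All} and the compactness properties of thin minimizers (citing \cite[Lemma 3.4]{EKPSS20}). One small slip: in your displayed Weiss energy the surface term should measure $\{u>0\}$ rather than $\{u=0\}$, though this only shifts $W$ by a constant and does not affect the monotonicity argument.
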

\begin{proof}
The proof is the same as the local setting (Lemma \ref{lem:blowdown}) using the Weiss-type monotonicity formula adapted to the thin case in \cite{All} and the compactness properties of minimizers to the thin functional (see, e.g. \cite[Lemma 3.4]{EKPSS20}).
\end{proof}

Finally,   in analogy to the classical setting, homogeneous minimizers have smooth free boundaries on the sphere in low dimensions.  Recall the critical dimension $n_{\rm thin}^*$ defined in \eqref{LargestDimensionThin}.
\begin{lem}
\label{TraceSmoothThin}
Suppose that $u$ is a homogeneous minimizer in $\R^{n+1}$ with 
$$
n\le n_{\rm thin}^*+1.
$$
Then $F(u)\cap\mathbb{S}_0^{n}$ is smooth.
\end{lem}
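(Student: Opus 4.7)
The plan is to follow the strategy of Lemma~\ref{TraceSmoothClassical}: a dimension-reduction argument by blow-up. Suppose for contradiction that $F(u)\cap \mathbb{S}^n_0$ fails to be smooth. Then there is a singular point, which after a rotation inside $\mathbb{S}^n_0$ we may assume to be $e_1=(1,0,\dots,0)$. I would then blow $u$ up at $e_1$ to obtain a $1/2$-homogeneous global minimizer that is translation invariant along $e_1$. Factoring out the $e_1$ direction yields a $1/2$-homogeneous global minimizer of the thin energy in $\R^{(n-1)+1}$ which still carries a free boundary singularity at the origin, contradicting the definition of $n_{\rm thin}^*$ in \eqref{LargestDimensionThin}.

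Concretely, consider the rescalings
\[
u_r(x) := \frac{u(e_1 + rx)}{r^{1/2}},\qquad r \downarrow 0.
\]
Using the thin Weiss-type monotonicity formula from \cite{All} centered at the free boundary point $e_1$ (the same tool used in Lemma~\ref{lem:nonlocalblowdown}) together with the compactness of thin minimizers \cite[Lemma 3.4]{EKPSS20}, along a subsequence I would obtain a $1/2$-homogeneous global minimizer $v$ of the thin Alt--Caffarelli energy \eqref{eq:JOm_frac}. A direct computation using the $1/2$-homogeneity of $u$ gives the identity
\[
u_r(x+\tau e_1) \;=\; u_{r/(1+r\tau)}(x),
\]
so that $v(x+\tau e_1)=v(x)$ for every $\tau\in\R$. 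Consequently $v$ depends only on the coordinates $(x_2,\ldots,x_{n+1})$ and descends to a function $\tilde v$ on $\R^n$; since $e_1\perp e_{n+1}$, the thin direction is preserved, and a standard slicing argument applied to \eqref{eq:JOm_frac} shows that $\tilde v$ is itself a $1/2$-homogeneous global minimizer in $\R^{(n-1)+1}$, with a singularity of $F(\tilde v)$ at the origin inherited from the singularity of $F(u)$ at $e_1$.

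Finally, the hypothesis $n \le n_{\rm thin}^*+1$ reads $n-1 \le n_{\rm thin}^*$, so by \eqref{LargestDimensionThin} the cone $\tilde v$ must be a rotation of the half-plane solution \eqref{HalfSpaceSolutionThin}, whose free boundary is a smooth hyperplane inside $\{y=0\}$. This contradicts the singularity at the origin and finishes the argument. I expect the main point requiring care to be the verification that the reduction $v\mapsto \tilde v$ preserves the thin minimizing property and retains the free boundary singularity; this is the thin analogue of the essentially trivial step in Lemma~\ref{TraceSmoothClassical}, and hinges on the correct interaction between the Dirichlet term and the $n$-dimensional Hausdorff measure in \eqref{eq:JOm_frac} under $e_1$-invariance.
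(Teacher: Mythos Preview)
Your proposal is correct and follows essentially the same dimension-reduction-by-blow-up argument as the paper's proof (which simply says ``The proof proceeds exactly as in the classical case (Lemma~\ref{TraceSmoothClassical})''). You have in fact supplied more detail than the paper does, including the explicit rescaling identity yielding $e_1$-invariance and the caveat about verifying that minimality and the singularity survive the reduction.
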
 

\begin{rem}
For the thin one-phase problem, this is the only place where we require the restriction on dimension. 
\end{rem} 

\begin{proof}
The proof proceeds exactly as in the classical case (Lemma \ref{TraceSmoothClassical}). 
\end{proof} 

\section{Graphical solutions are minimizers: the thin case}
\label{sec:6}

As in the classical case, we first show that graphical solutions are minimizers to the thin one-phase functional. The first step is adapting the definition of graphical free boundaries, Definition \ref{defi.2}, to the thin setting:

\begin{defi}
\label{defi.3}
Let $u$ be a solution to the thin one-phase problem in $\R^{n+1}$ as in Definition \ref{DefViscositySolutionThin}, and that $\be\in\mathbb{S}_0^{n}$.

We say that $u$ is a \textit{graphical solution in direction $\be$}, and write
$$
u\in\mathcal{G}_s(\be),
$$
 if 
 $$\CSetC+\tau\be\supset\CSetC \text{ for all $\tau>0$.}$$ 
\end{defi}
Recall that the contact set $\CSetC$ is defined in \eqref{ContactSetThin}.

\vem

By definition, if a solution is monotone in the direction $\be$, then it has a graphical free boundary in that direction. As in the classical case,  the converse is also true in the thin setting.  

Actually, the statement in the thin case is more general as it does not involve the free boundary condition (see Remark~\ref{rem:alt_proof} for a direct proof that uses the free boundary condition). This is, in part, due to the fact that we can apply the boundary Harnack inequality in \emph{any} slit domain \cite{DS20}.

\begin{prop}
\label{thm:monotone_frac}
Let  $u$ be a solution to  
\[
\left\{
\begin{array}{rcll}
\Delta u & = & 0 & \quad\text{in}\quad \R^{n+1}\setminus \Lambda\\
u & \ge  & 0  &\quad\text{in}\quad \R^{n+1}\\
u & = & 0 & \quad\text{on}\quad \Lambda,
\end{array}
\right.
\]
where $\Lambda = \{u = 0\}\subset \{y = 0\}.$

If there is $\be\in\mathbb{S}_0^{n}$ such that $$\CSetC+\tau\be\supset\CSetC \text{ for all $\tau>0$,}$$ 
then $u$ is monotone nondecreasing in the direction $\be$. 
\end{prop}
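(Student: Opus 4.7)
My plan is to mimic the blow-up argument in Lemma~\ref{lem:monotone}, substituting the boundary Harnack principle for slit domains \cite{DS20} in place of the uniform Lipschitz bound (Lemma~\ref{lem:visclip}), which is not available under the weaker hypotheses here. Argue by contradiction: suppose $\gamma := -\inf_{\R^{n+1}\setminus\Lambda}\partial_\be u > 0$. Because $\partial_\be u$ is harmonic in $\R^{n+1}\setminus\Lambda$ and interior gradient estimates bound it on compact subsets disjoint from $\Lambda$, any unboundedness must occur near $\Lambda$; the graphicality condition $\Lambda+\tau\be\supset\Lambda$ forces $\be$ to point outward from or tangentially to $\Lambda$ at every edge point, so the $\sqrt{\mathrm{dist}}$ asymptotics of positive harmonic functions vanishing on $\Lambda$ can only drive $\partial_\be u$ to $+\infty$ (or $0$), never to $-\infty$. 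Hence $\gamma<\infty$, and we may pick $x_i\in\{u>0\}$ with $\partial_\be u(x_i)\to -\gamma$, together with nearest contact points $y_i\in\Lambda$ and $r_i:=|x_i-y_i|$.

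Rescale $w_i(x):=u(r_i x+x_i)/u(x_i)$; this is nonnegative and harmonic in $B_1$ with $w_i(0)=1$. Interior Harnack supplies the gradient bound $|\partial_\be u(x_i)|\le C u(x_i)/r_i$, so $r_i/u(x_i)$ is bounded above. The crucial point---where the boundary Harnack principle in slit domains~\cite{DS20} enters---is a matching lower bound $r_i/u(x_i)\ge c_0>0$, obtained by applying boundary Harnack to the pair $u$ and $u_\tau:=u(\cdot+\tau\be)$ for small $\tau>0$: both are positive harmonic in $\R^{n+1}\setminus\Lambda$ and both vanish on the common slit $\Lambda-\tau\be\subset\Lambda$, so their ratio is controlled up to $\Lambda-\tau\be$, which in turn pins $u$ to the natural $\sqrt{\mathrm{dist}(\cdot,\Lambda)}$ scaling along the minimizing sequence. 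Along a subsequence $r_i/u(x_i)\to c\in(0,\infty)$ and $w_i\to \bar w$ in $C^1_{\rm loc}(B_1)$, with $\bar w\ge 0$ harmonic on $B_1$, $\bar w(0)=1$, and $\partial_\be\bar w(0)=-c\gamma<0$.

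From here the argument follows the classical template. The infimum is preserved in the limit, $\partial_\be\bar w\ge -c\gamma$ with equality at the origin, so the strong maximum principle applied to the harmonic function $\partial_\be\bar w$ in $B_1$ forces $\partial_\be \bar w\equiv -c\gamma$, and $\bar w$ decomposes as
\[
\bar w(x) = -c\gamma\,(x\cdot\be) + g(x-(x\cdot\be)\be)
\]
with $g$ smooth. Nonnegativity yields $g(x^\perp)\ge c\gamma\sqrt{1-|x^\perp|^2}$ on $B_1\cap\be^\perp$, while the closest-point property of $y_i$ combined with $\Lambda+\tau\be\supset\Lambda$ forces the subsequential limit $\bar y:=\lim(y_i-x_i)/r_i\in\partial B_1$ to satisfy $\bar y\cdot\be\le 0$; together with $\bar w(\bar y)=0$ this forces $\bar y\cdot\be=0$, $|\bar y^\perp|=1$, and $g(\bar y^\perp)=0$, contradicting the $\sqrt{\cdot}$ lower bound against the interior Lipschitz regularity of $g$, exactly as in Lemma~\ref{lem:monotone}. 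The hardest step to make precise is the lower bound on $r_i/u(x_i)$ via boundary Harnack---equivalently, ruling out a degenerate blow-up with $c=0$---which is precisely what lets this proof bypass the free boundary condition used in the alternative argument of Remark~\ref{rem:alt_proof}.
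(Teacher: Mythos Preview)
Your blow-up strategy is quite different from the paper's approach, and there are two genuine gaps that prevent it from working as written.

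First, the finiteness of $\gamma$. Your justification invokes ``the $\sqrt{\mathrm{dist}}$ asymptotics of positive harmonic functions vanishing on $\Lambda$'', but under the bare hypotheses here $\Lambda$ is just an arbitrary closed subset of $\{y=0\}$ with no regularity, so no such asymptotics are available. The boundary Harnack principle of \cite{DS20} applies to arbitrary slits, but it is a \emph{comparison} statement between two solutions; it does not by itself force any particular growth rate for $u$ or for $\partial_\be u$ near $\Lambda$.

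Second, and more seriously, the lower bound $r_i/u(x_i)\ge c_0>0$. You correctly identify this as the crux, but your sketch does not supply it. Applying boundary Harnack to $u$ and $u_\tau$ gives $u_\tau\le C u$ (after observing one is a subsolution on the slit of the other), which is a \emph{ratio} bound, not a bound of the form $u(x)\le C\,\mathrm{dist}(x,\Lambda)^{1/2}$. Without the free boundary condition there is nothing preventing, say, $u(x_i)/r_i\to\infty$ along your sequence, in which case $\partial_\be w_i(0)=r_i\partial_\be u(x_i)/u(x_i)\to 0$ and the limit $\bar w$ satisfies $\partial_\be\bar w(0)=0$, yielding no contradiction. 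This is exactly the degenerate scenario $c=0$ you flag, and boundary Harnack alone does not rule it out.

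For comparison, the paper's proof avoids blow-up entirely. It applies the slit boundary Harnack directly to the pair $(u,u_\tau)$ with $u_\tau(x)=u(x-\tau\be)$ (which is subharmonic off $\Lambda$ since $\Lambda+\tau\be\supset\Lambda$), obtaining $g_\tau(R)\,u_\tau\le C\,u$ in $B_R$ with $g_\tau(R)=u(R\be_{n+1})/u_\tau(R\be_{n+1})\to 1$ as $R\to\infty$. The point is then an \emph{iterative} improvement: applying boundary Harnack again to $w=u-\frac{g_\tau(R)}{C}u_\tau$ on $B_{R/2}$ replaces $C$ by $C+\frac{1}{C}-1+O(\tau/R)$, and iterating drives the constant down to $1$. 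This argument never needs $\gamma<\infty$, never needs a growth rate for $u$, and never looks at derivatives of $u$ --- it works purely at the level of the comparison $u_\tau\le u$, which is exactly the monotonicity statement. Your approach, by contrast, tries to control $\partial_\be u$ pointwise, which requires quantitative information not present in the hypotheses.
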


\begin{proof}
Let us define, for some $\tau > 0$, 
\[
u_\tau(x) = u(x-\tau \be). 
\]
We have
\[
\left\{
\begin{array}{rcll}
\Delta u_\tau & \ge & 0 & \quad\text{in}\quad \R^{n+1}\setminus \Lambda\\
u_\tau & \ge  & 0  &\quad\text{in}\quad \R^{n+1}\\
u_\tau & = & 0 & \quad\text{on}\quad \Lambda.
\end{array}
\right.
\]
We have used here that $\Delta u \ge 0$ globally, and  $u_\tau = 0$ on~$\Lambda$. Thus, $u_\tau$ and $u$ are globally defined nonnegative and continuous functions that vanish continuously on some slit domain $\Lambda$, and $u$ is harmonic outside of $\Lambda$, whereas $u_\tau$ is subharmonic (and harmonic outside the thin space). 

Boundary Harnack inequality for slit domains \cite[Corollary 3.4]{DS20} (see also \cite[Theorem 1.8]{RT21}) for even functions gives a constant $C$ depending only on $n$ such that
\begin{equation}
\label{eq:fromhere}
g_\tau(R) u_\tau \le C u \quad\text{in}\quad B_R,
\end{equation}
where $g_\tau(R)$ denotes
\[
g_\tau(R) := \frac{u(R\be_{n+1})}{u_\tau(R\be_{n+1})}.
\]
 We comment that \cite[Corollary 3.4]{DS20} concerns two solutions whereas $u_\tau$ is a subsolution. However an inspection of the proof shows that the one sided inequality \eqref{eq:fromhere} holds when $u_\tau$ is a subsolution. 

Let us start by bounding $g_\tau(R)$ in terms of $\tau$ and $R$:
\[
|g_\tau(R)-1|= \frac{|u(R\be_{n+1})-u_\tau(R\be_{n+1})|}{u_\tau(R\be_{n+1})}\le \tau \frac{\|\nabla u\|_{L^\infty(B_{R/4}(R\be_{n+1}))}}{u_\tau(R\be_{n+1})},
\]
where we are taking $R > 4$ and $\tau < 1$. By the interior Harnack inequality applied to $u_\tau$ (notice that $\Delta u_\tau = 0$ in $B_R(R\be_{n+1})$), we also know that
\[
u_\tau(R\be_{n+1})\ge c \|u\|_{L^\infty(B_{R/2}(R\be_{n+1}))},
\]
for some $c$ depending only on $n$. Hence, 
\begin{equation}
\label{eq:boundgtau}
|g_\tau(R)-1|\le C \tau \frac{\|\nabla u\|_{L^\infty(B_{R/4}(R\be_{n+1}))}}{\|u\|_{L^\infty(B_{R/2}(R\be_{n+1}))}} \le \frac{ \tilde C \tau}{R}
\end{equation}
for some constant $\tilde C$ depending only on $n$, where in the last inequality we are applying gradient estimates for harmonic functions in a ball of radius $R/2$. 

On the other hand, from \eqref{eq:fromhere} we can define a function
\[
w :=  u - \frac{g_\tau(R)}{C} u_\tau,
\]
that satisfies
\[
\left\{
\begin{array}{rcll}
\Delta w & \le & 0 & \quad\text{in}\quad \R^{n+1}\setminus \Lambda\\
w & \ge  & 0  &\quad\text{in}\quad B_R\\
w & = & 0 & \quad\text{on}\quad \Lambda.
\end{array}
\right.
\]
We can therefore apply again the boundary Harnack inequality in slit domains to deduce
\[
C w\ge \frac{w\left( \frac{R}{2}\be_{n+1}\right)}{u\left( \frac{R}{2}\be_{n+1}\right)}\,u = \left(1-\frac{1}{C} \frac{g_\tau(R)}{g_\tau(R/2)}\right) u \quad\text{in}\quad B_{R/2},
\]
for the same constant $C$ as in \eqref{eq:fromhere}. Rearranging terms with the definition of $w$, this implies
\[
g_\tau (R) u_\tau \le \left(C+ \frac{1}{C} \frac{g_\tau(R)}{g_\tau(R/2)}-1\right) u\quad\text{in}\quad B_{R/2}. 
\]
Observe that, from \eqref{eq:boundgtau}, if $R \ge R_0$ for some universal $R_0\ge 8\tilde C$, then 
\[
\frac{g_\tau(R)}{g_\tau(R/2)} \le \frac{1+\frac{\tilde C\tau}{R}}{1-\frac{2\tilde C\tau}{R}} = 1 + 3\frac{{\tilde C\tau}}{R-{2\tilde C\tau}} \le 1+4\tilde C \frac{\tau}{R}.
\]
 Hence we have 
\begin{equation}
\label{eq:tohere}
g_\tau (R) u_\tau \le \left(C+ \frac{1}{C} -1+C' \frac{\tau}{R}\right) u\quad\text{in}\quad B_{R/2}
\end{equation}
for some constant $C'$ depending only on $n$. Thus, we have gone from \eqref{eq:fromhere} to \eqref{eq:tohere}, where the constant is improved (if $R$ is large enough). Iterating the procedure, we have that 
\begin{equation}
\label{eq:Ck0}
g_\tau (R) u_\tau \le C_k u\quad\text{in}\quad B_{2^{-k}R},
\end{equation}
for all $k\in \N$ such that $2^{-k}R \ge R_0$, and where $C_k$ satisfy the recurrence relation:
\begin{equation}
\label{eq:Ck}
C_0 = C,\qquad C_{k+1} = C+ \frac{1}{C} -1+C' \frac{\tau}{2^{-k+1}R}.
\end{equation}

Now let $\alpha > 0$ be fixed, and let us consider the recurrence 
\[
x^\alpha_0 = C,\qquad x^\alpha_{k+1} = x^\alpha_k +\frac{1}{x^\alpha_k} -1+ \alpha\quad\text{for}\quad k\in \N. 
\]
Then if $C > \frac{1}{1-\alpha}$, $x^\alpha_k$ is decreasing and with limit $\frac{1}{1-\alpha}$. So for any $0<\alpha < \frac14$, assuming $C$ is large enough, there exists some $k_\alpha$ such that $x_k^\alpha \le 1+2\alpha$ for all $k \ge k_\alpha$. 

Fix $\alpha \in (0, 1/4)$. The constant $C > 0$ comes from \eqref{eq:fromhere} and we can always take it bigger so that $C > \frac{1}{1-\alpha}$. This fixes $C' > 0$ as in \eqref{eq:tohere}. Let $R \ge R_\alpha$ with $R_\alpha$ such that
\[
\frac{C'}{2^{-k_\alpha+1}R_\alpha} \le \alpha. 
\]
Then, from \eqref{eq:Ck} (recall $\tau < 1$) we know $C_k \le x_k^\alpha$ for $k \leq k_\alpha$, and thus $C_{k_\alpha}\le x_{k_\alpha}^\alpha \le 1+2\alpha$. Hence, in \eqref{eq:Ck0} we have
\[
\left(1-\frac{\tilde C\tau}{R}\right) u_\tau \le (1+2\alpha) u\quad\text{in}\quad B_{2^{-k_\alpha}R}.
\]
We now let first $R\to \infty$, and then $\alpha \downarrow 0$, to get 
\[
u_\tau = u(\cdot-\tau \be) \le u \quad\text{in}\quad \R^{n+1}. 
\]
Since $0<\tau < 1$ is arbitrary, this implies that $u$ is monotone in the direction $\be$, as we wanted to see. 
\end{proof}

\begin{rem}
\label{rem:alt_proof}
For the thin one-phase problem \eqref{eq:visc_sol_s}, such monotonicity follows directly from Lemma \ref{lem:viscC1/2} and the scaling of the problem:

If we assume $u\in\mathcal{G}_s(\be)$ with $\{u=0\}\neq\emptyset$ and define $v(x)=[u(x)-u(x-\tau\be)]/\tau$, then the  graphicality  of the free boundary implies that $\Delta v^-\ge 0$ in $\R^{n+1}$. As a consequence, we have
$$
v^-(0)\le C\left(\ave_{B_R}v^2\right)^{1/2}\le C\left(\frac{1}{\tau} \int_0^\tau \left[\ave_{B_R} u^2_{\be}(\cdot - t\be )\right]\, dt\right)^{1/2}\le CR^{-1/2},
$$
where the last inequality follows from the Caccioppoli estimate for the subharmonic function $u$ (where its growth is controlled by Lemma~\ref{lem:viscC1/2}). 

Sending $R\to\infty$ gives the desired monotonicity. 
\end{rem}

As in the local case, compactness of ``monotone" solutions follows immediately:

\begin{cor}
\label{cor:closedgraph2}
If $u_i \in \mathcal G_s(e)$ with $u_i \rightarrow u_\infty$ uniformly on compact sets, then $u_\infty \in \mathcal G_s(e)$. 
\end{cor}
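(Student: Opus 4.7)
The plan is a three-step reduction: first invoke Proposition~\ref{thm:monotone_frac} to upgrade the graphicality of each $u_i$ to pointwise monotonicity in direction $\be$, then pass this pointwise inequality to the limit under uniform convergence on compact sets, and finally observe that such monotonicity of the limit trivially implies the set-theoretic inclusion $\Lambda(u_\infty)+\tau\be\supset \Lambda(u_\infty)$ that defines $\mathcal G_s(\be)$.

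In more detail, each $u_i$ is a viscosity solution of \eqref{eq:visc_sol_s}, hence nonnegative, harmonic in $\R^{n+1}\setminus\Lambda(u_i)$, and vanishing on $\Lambda(u_i)\subset\{y=0\}$. These are precisely the hypotheses of Proposition~\ref{thm:monotone_frac}, so $u_i\in\mathcal G_s(\be)$ yields
\[
u_i(x+\tau\be)\ge u_i(x)\quad\text{for all } x\in\R^{n+1}\text{ and }\tau>0.
\]
Uniform convergence on compact sets preserves this pointwise inequality, so the same bound holds for $u_\infty$. Given $x\in\Lambda(u_\infty)$ and $\tau>0$, the chain $0\le u_\infty(x-\tau\be)\le u_\infty(x)=0$, together with $\be\in\mathbb{S}^n_0$ (which keeps $x-\tau\be$ on the thin space), forces $x-\tau\be\in\Lambda(u_\infty)$. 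This is precisely the inclusion $\Lambda(u_\infty)+\tau\be\supset\Lambda(u_\infty)$.

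The only background input that should be recorded is that $u_\infty$ is itself a viscosity solution of \eqref{eq:visc_sol_s}, so that the notation $u_\infty\in\mathcal G_s(\be)$ is meaningful via Definition~\ref{defi.3}. This is the standard closedness of the viscosity class under uniform convergence on compact sets for the thin one-phase problem and is routine. Accordingly, I do not anticipate any real obstacle: the substantive content of the corollary is entirely absorbed into Proposition~\ref{thm:monotone_frac}, after which the argument is linear and passes trivially to the limit.
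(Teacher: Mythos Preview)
Your proposal is correct and matches the paper's intended argument: the paper gives no explicit proof of this corollary, simply stating that it ``follows immediately'' from Proposition~\ref{thm:monotone_frac} exactly as in the classical case (Corollary~\ref{cor:closedgraph} after Lemma~\ref{lem:monotone}). Your three-step reduction---monotonicity of each $u_i$, passage to the limit, and recovery of the subgraph inclusion---is precisely how one unpacks that ``immediately,'' and your remark that $u_\infty$ must itself be a viscosity solution for the notation to be meaningful is a fair point that the paper leaves implicit.
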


We are now ready to show, in the thin setting, that global monotone solutions are actually minimizers of the functional \eqref{eq:JOm_frac}. As in the local case, we believe this to be a contribution of independent interest. 
 
For $R> 0$ we define 
$$B''_R := \{x'\in \mathbb R^{n-1}: |x'| < R\}.$$

\begin{prop}
\label{prop:cont_graph_s}
For $L > H > 0$, let $u$ be a viscosity solution to the thin one-phase problem \eqref{eq:visc_sol_s} in $\Omega \equiv B_2''\times (-2L-H, 2L + H) \times (-2L, 2L)$ with 
\[\partial_n u \geq 0\quad \text{in}\quad \Omega.
\]
 If its contact set is a subgraph
 \[\Lambda(u)  = \{(x', x_n,0)  : x_n \leq f(x')\}\]
  for some continuous function $f$ satisfying 
  \[
  -H < f(x') < H\quad\text{in}\quad B_2'',
  \] 
  then $u$ is the unique minimizer of the thin Alt-Caffarelli functional \eqref{eq:JOm_frac} in $D:= B_1''\times (-L, L)\times (-L, L)$. 
\end{prop}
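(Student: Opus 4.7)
The plan is to mirror the two-sided sliding argument used in the classical setting (Lemmas~\ref{lem:cont_graph1} and~\ref{lem:cont_graph2}, combined in Proposition~\ref{prop:cont_graph}), establishing separately that $u \geq w$ and $u \leq w$ in $D$ for any minimizer $w$ of the thin functional \eqref{eq:JOm_frac} in $D$ with $w = u$ on $\partial D$. The three ingredients I rely on are all now available: the hypothesis gives monotonicity of $u$ in the $\be_n$ direction (consistent with Proposition~\ref{thm:monotone_frac}); the thin inf/sup-convolution calculus of Lemma~\ref{PropertyOfConvolutionsThin} produces sub/supersolutions with a tangent thin ball and the $U$-type asymptotic expansion that plays the role of a Hopf condition at free-boundary points; and Theorem~\ref{BoundaryRegularityIntro} handles the boundary stickiness issue, which is the reason its thin analogue had to be developed separately in Section~\ref{sec:8}.

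For the lower bound $u \geq w$, I would argue by contradiction that $w(x_\circ) > u(x_\circ) + \eta_\circ$ at some $x_\circ \in D$. Using the strong maximum principle applied to the harmonic, nonnegative function $u_s - u$ on an intermediate domain $\{u>0\}\cap D \Subset E \Subset \{u_s > 0\}$, I fix $s, \delta > 0$ so that $u_s:= u(\,\cdot\,+s\be_n) \geq u + \delta$ on $\overline{\{u>0\}\cap D}$ while $u_s(x_\circ) < w(x_\circ) - \eta_\circ/2$. Taking the thin inf-convolution $v_\tau := \inf_{B_\rho'(x)} u_\tau$ of Definition~\ref{Definitionthinsupconv} with $\rho > 0$ small, I preserve $v_s \geq u$ on $\overline D$, with a quantitative gap on $\overline{\{u>0\}\cap\partial D}$, and still $v_s(x_\circ) < w(x_\circ) - \eta_\circ/4$. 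The upper bound $f < H$ guarantees that for $t$ large enough, $v_{s+t} > 0$ on $\overline D$, so by Lemma~\ref{PropertyOfConvolutionsThin}, $\Delta v_{s+t} \leq 0$ in $D$; combined with $v_{s+t}\geq u = w$ on $\partial D$ and $\Delta w \geq 0$, the maximum principle yields $v_{s+t}\geq w$. I then define $t_* := \inf\{t \geq 0 : v_{s+t} \geq w \text{ on }\overline D\} > 0$ and select a contact point $\bar x$.

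The heart of the argument, and the step where I expect the main obstacle, is ruling out the three possible locations of $\bar x$. Contact in $\{w>0\}\cap D$ is excluded by the strong maximum principle for the nonnegative superharmonic function $v_{s+t_*} - w$, and contact on $\{w>0\}\cap \partial D$ is excluded by the quantitative gap on $\overline{\{u>0\}\cap \partial D}$. Hence $v_{s+t_*}(\bar x) = w(\bar x) = 0$ and $\bar x \in F(w)$. If $\bar x \in F(w)\cap D$, the tangent thin-ball property of $v_{s+t_*}$ in Lemma~\ref{PropertyOfConvolutionsThin} makes $\bar x$ a regular free-boundary point, and the minimizer $w$ enjoys the sharp expansion $w(x) = U((x-\bar x)'\cdot \nu, x_{n+1}) + o(|x-\bar x|^{1/2})$, while $v_{s+t_*}$ satisfies the reversed expansion with coefficient $\leq 1$; a thin Hopf-type comparison for the nonnegative superharmonic function $v_{s+t_*} - w$ forces a contradiction on the leading coefficients. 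The genuinely delicate case is $\bar x \in F(w) \cap \partial D$, where the strict separation on $\overline{\{u>0\}\cap\partial D}$ supplies a relatively open neighborhood $Z\subset \partial D$ of $\bar x$ on which $w = u = 0$; after flattening $\partial D$ near $\bar x$, Theorem~\ref{BoundaryRegularityIntro} applies and simultaneously provides $C^{1/2}$ regularity up to $\bar x$ and the nondegeneracy $\sup_{B_r(\bar x)\cap D} w \geq c r^{1/2}$. Together these yield a definite lower bound on the leading coefficient in a $U$-type expansion of $w$ at $\bar x$, again contradicting the supersolution expansion of $v_{s+t_*}$ by Hopf's lemma.

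The reverse inequality $u \leq w$ is obtained by the symmetric argument with the thin sup-convolution $\tilde v_{-s-t} := \sup_{B_\rho'(x)} u_{-s-t}$, sliding downward in $\be_n$ until the critical instant; the lower bound $f > -H$ now forces $\tilde v_{-s-t}\equiv 0$ on $\overline D$ for $t$ large, which initializes the sliding. In this direction boundary contact and positivity-set interior contact are ruled out respectively by the analogous strict gap and by the maximum principle, so the contact point lies in $F(\tilde v_{-s-t_*})\cap D$, and the interior free-boundary Hopf argument concludes exactly as before (no appeal to Theorem~\ref{BoundaryRegularityIntro} is required here). Combining both inequalities gives $u = w$ in $D$, proving both the minimality of $u$ and the uniqueness claim of the proposition.
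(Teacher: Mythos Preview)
Your overall strategy matches the paper's: prove $u\ge w$ and $u\le w$ via sliding arguments with thin inf/sup-convolutions, using Lemma~\ref{PropertyOfConvolutionsThin} at interior free-boundary contact points and Theorem~\ref{BoundaryRegularityIntro} for the boundary case. The upper bound $u\le w$ and the initialization/interior-contact steps of the lower bound are essentially the paper's proof.

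There is, however, a genuine gap in your treatment of the boundary contact case $\bar x\in \overline{F(w)}\cap\partial D$. You claim that the $C^{1/2}$ estimate and the nondegeneracy from Theorem~\ref{BoundaryRegularityIntro} ``yield a definite lower bound on the leading coefficient in a $U$-type expansion of $w$ at $\bar x$'', to be compared via Hopf with the expansion of $v_{s+t_*}$. This inference does not hold: a $U$-type expansion at $\bar x$ would require $F(w)$ to have a well-defined normal there, i.e.\ to be $C^1$ near $\bar x$. In the classical case this is precisely what Chang-Lara--Savin (Theorem~\ref{thm:CS}) provides, but no analogue is available for the thin problem---that absence is exactly why Section~\ref{sec:8} had to be written. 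The $C^{1/2}$ bound only gives $w\le C|x-\bar x|^{1/2}$ and nondegeneracy only gives $\sup_{B_r(\bar x)}w\ge cr^{1/2}$; neither controls the geometry of $F(w)$ near $\bar x$, so there is no expansion to feed into a Hopf comparison. (There is also a secondary issue: Lemma~\ref{PropertyOfConvolutionsThin} gives the tangent-ball expansion only at interior free-boundary points of the convolution.)

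The paper closes this case by a different mechanism, without ever producing an expansion. Since $\bar x\notin F(u)$, the continuity of $f$ gives a full thin neighborhood $B'_{4\theta}(\bar x)\times\{0\}\subset\Lambda(u)$, so harmonicity of $u$ in $\{y>0\}$ forces $|u|\le C_2|y|$ near $\bar x$; hence the boundary datum of $w$ on $\partial D$ satisfies $|w|\le C_2|y|$ there (this is also how the hypothesis $\psi\le\omega(|y|)|y|^{1/2}$ of Theorem~\ref{BoundaryRegularityIntro} is verified). One then compares $w$ with a harmonic barrier $\varphi$ in $B_\theta(\bar x)\cap D$ having these Lipschitz boundary values and equal to $1$ on $\partial B_\theta(\bar x)$; boundary Schauder estimates give $\varphi\le C_\eps|x-\bar x|^{1-\eps}$. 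Taking $\eps=\tfrac14$ yields $\sup_{B_r(\bar x)\cap D}w\le Cr^{3/4}$, contradicting the nondegeneracy $\sup_{B_r(\bar x)\cap D}w\ge cr^{1/2}$ for small $r$. The contradiction is between two \emph{growth rates} ($r^{1/2}$ versus $r^{3/4}$), not between two expansion coefficients.
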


As in the classic case, we split this proposition up into two pieces:

\begin{lem}\label{lem:minbelow}
Under the same assumptions and using the same notation as Proposition \ref{prop:cont_graph_s}, if $w$ is a minimizer to the functional \eqref{eq:JOm_frac} in $D$ with $w=u$ on $\partial D$ then $u\geq w$ in $D$. 
\end{lem}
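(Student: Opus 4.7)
The plan is to adapt the sliding/inf-convolution argument from Lemma \ref{lem:cont_graph1} to the thin setting, using Theorem \ref{BoundaryRegularityIntro} in place of \cite{CS19}. Assuming for contradiction that $w(x_\circ) > u(x_\circ) + \eta_\circ$ for some $x_\circ \in D$ and $\eta_\circ > 0$, and writing $u_\tau(x) := u(x + \tau \be_n)$, I would first fix $s > 0$ small so that $w(x_\circ) > u_s(x_\circ) + \tfrac12 \eta_\circ$, and then combine the monotonicity of $u$ in $\be_n$ (Proposition \ref{thm:monotone_frac}) with the continuity of $f$ and the strong maximum principle applied to $\partial_n u_s$ to produce $\delta > 0$ with
\[
  u_s \geq u + \delta \qquad \text{on} \qquad \overline{\{u > 0\} \cap D}.
\]
Choosing $\rho > 0$ small (depending on $\delta$ and the modulus of continuity of $u$), the thin inf-convolution $v_\tau := \underline{(u_\tau)}_\rho$ from Definition \ref{Definitionthinsupconv} then satisfies $v_s \geq u$ in $\overline D$ while still obeying $v_s(x_\circ) > w(x_\circ) - \tfrac14 \eta_\circ$.

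Next I would slide by $t$: the upper bound $f < H$ in $B_2''$ ensures that for $t$ sufficiently large the contact set of $u_{s+t}$ sits below $\overline D$, so $v_{s+t}$ is positive throughout $\overline D$, which by Lemma \ref{PropertyOfConvolutionsThin} gives $\Delta v_{s+t} \leq 0$ in $D$. The same monotonicity-plus-continuity reasoning as above shows $v_{s+t} \geq u = w$ on $\partial D$, and since $w$ is subharmonic in $D$ as an Alt--Caffarelli minimizer, the comparison principle yields $v_{s+t} \geq w$ in $\overline D$ for such $t$. Defining the critical contact time
\[
  t_* := \inf\{t \geq 0 : v_{s+t} \geq w \text{ in } \overline D\},
\]
the gap at $x_\circ$ forces $t_* > 0$, and continuity produces a contact point $\bar x \in \overline{\{w > 0\} \cap D}$ with $v_{s+t_*}(\bar x) = w(\bar x)$.

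The remainder of the argument is to rule out all possible positions of $\bar x$. Interior contact inside $\{w > 0\} \cap D$ is excluded by the strong maximum principle applied to the nonnegative function $v_{s+t_*} - w$ (superharmonic minus harmonic there); boundary contact on $\partial D$ at a point where $w > 0$ is excluded by the strict gap $\delta/2$ inherited from the inequality above; and interior contact at a free boundary point of $w$ inside $D$ is excluded by combining the tangent-ball supersolution expansion of $v_{s+t_*}$ from Lemma \ref{PropertyOfConvolutionsThin} with the regular-point asymptotics $w(x) = U(\langle x - \bar x, \nu\rangle, x_{n+1}) + o(|x - \bar x|^{1/2})$ of the minimizer $w$, which together contradict a Hopf-type boundary point principle for nonnegative harmonic functions vanishing on a smooth slit.

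The hard part is the remaining case, where $\bar x \in \partial D \cap \{y = 0\}$ and $w(\bar x) = 0$: a priori $w$ may exhibit the boundary stickiness phenomenon, so the half-plane expansion of $w$ at $\bar x$ is not automatic. Here I would flatten a face of $\partial D$ at $\bar x$ into a half-space and invoke the new boundary regularity statement Theorem \ref{BoundaryRegularityIntro}, whose $C^{1/2}$ bound and quantitative nondegeneracy at $\bar x$ deliver exactly the $U$-type expansion of $w$ needed to close the Hopf argument as in the interior case. This is precisely why the thin analogue of Chang-Lara--Savin, proved in Section \ref{sec:8}, enters the proof.
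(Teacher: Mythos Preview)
Your outline tracks the paper's proof through the first three steps (inf-convolution, sliding, locating the contact point on the thin space) and through the interior free-boundary case, modulo a sign slip (you want $w(x_\circ) > v_s(x_\circ) + \tfrac14\eta_\circ$, not $v_s(x_\circ) > w(x_\circ) - \tfrac14\eta_\circ$) and the fact that the $\delta$-gap is only needed (and only established in the paper) on the thin slice $\overline{\{u>0\}\cap (D'\times\{0\})}$, not on all of $\overline{\{u>0\}\cap D}$.

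The genuine gap is in your last paragraph. Theorem~\ref{BoundaryRegularityIntro} does \emph{not} yield a $U$-type expansion of $w$ at $\bar x$: it gives only a $C^{1/2}$ upper bound and a nondegeneracy lower bound $\sup_{B_r(\bar x)\cap D} w \ge c\,r^{1/2}$, neither of which produces half-plane asymptotics with coefficient $\ge 1$. Moreover, a Hopf comparison with $v_{s+t_*}$ at a point of the \emph{fixed} boundary is problematic since the relevant touching ball need not lie inside $D$. So the mechanism you propose does not close.

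The paper's argument in this case is different in kind and does not use $v_{s+t_*}$ at all: it is a growth-rate contradiction. Since the $\delta/2$-gap on $\partial D\cap\{y=0\}$ forces $\bar x\notin F(u)$, the point $\bar x$ lies strictly inside $\Lambda(u)$, and harmonicity of $u$ in $\{y>0\}$ gives $u=w\le C_2|y|$ on $\partial D$ near $\bar x$. On one hand, the nondegeneracy part of Theorem~\ref{BoundaryRegularityIntro} (applied after flattening, with $\psi\le C_2|y|\le \omega(|y|)|y|^{1/2}$) yields $\sup_{B_r(\bar x)\cap D} w \ge c_1 r^{1/2}$. On the other hand, comparing the subharmonic $w$ against the harmonic function in $B_\theta(\bar x)\cap D$ with boundary values $C_2|y|$ on $\partial D$ and a constant on $\partial B_\theta$ gives $w\le C_\eps r^{1-\eps}$ near $\bar x$ (boundary Schauder with Lipschitz data), say $w\le C r^{3/4}$. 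These two bounds are incompatible as $r\downarrow 0$, and that is the contradiction.
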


The proof of this lemma follows the same scheme as its local  counterpart (Lemma~\ref{lem:cont_graph1}) but, as mentioned in the introduction, there is no known thin analogue to the results of Chang-Lara--Savin \cite{CS19}. Instead, we use a growth result whose proof we defer to Section \ref{sec:8} (see Theorem \ref{BoundaryRegularityIntro}):

\begin{proof}
We assume not, so that, for some $x_\circ\in D'\times\{0\}$ and $\eta_\circ > 0$, 
\[
w(x_\circ) > u(x_\circ) + \eta_\circ. 
\]
(Observe that such an $x_\circ$ exists on $\{y = 0\}$ by the maximum principle applied on the domain $D_+:=B_1''\times (-L, L)\times(0, L)$.)  We define for $\tau\in \R$,
\[
u_\tau(x'', x_n,y) := u(x'', x_n + \tau,y),
\]
and by the continuity of $u$ we can pick $s > 0$ small and fixed such that $w(x_\circ) > u_s(x_\circ) + \frac12 \eta_\circ$. 

\medskip

\noindent \emph{Step 1: Setting up the inf-convolution}\newline
\indent By the monotonicity of $u$ in the $\be_n$ direction (Proposition~\ref{thm:monotone_frac}) and the uniform continuity of the free boundary in $D$, there exists a set $E\subset\{y = 0\}$ such that
\[
\{u > 0\}\cap (D'\times\{0\}) \Subset E\Subset \{u_{s} > 0\}\cap \{y = 0\},
\]
where the compact inclusions need to be understood in the induced topology of $\{y = 0\}$. By the strong maximum principle, 
\[
\inf_E \partial_n u_s > 0
\]
and  there exists some $\delta  >0$ small such that 
\begin{equation}
\label{eq:from3_s}
u_s \ge u + \delta \quad\text{on}\quad \overline{\left\{u > 0\right\}\cap (D'\times \{0\})}.
\end{equation}
Let $\rho > 0$ be small enough, to be determined later and denote the inf-convolution of $u_\tau$, as in Definition \ref{Definitionthinsupconv}, by 
\[
v_\tau(x,y) := \inf_{z\in B_\rho'(x)} u_\tau(z,y).
\]
By monotonicity, we have $v_{s+t}(x) \geq v_s(x)$ for all $t \geq 0$ and $x\in \overline{D}$. Picking $\rho$ small enough, depending on $\eta_\circ$ above, $\delta > 0$ from \eqref{eq:from3_s}, and the (uniform) modulus of continuity of $u_s$, we have
\begin{equation}
\label{eq:fromfrom_s}
v_s(x) \ge u(x) \quad\text{for all}~~x\in  \overline{ D },\quad\text{and}\quad w(x_\circ) > v_s(x_\circ) + \frac14 \eta_\circ.
\end{equation}

\noindent \emph{Step 2: Initializing the sliding argument}\newline
\indent As in the local case, our $v_s$ will be a family of supersolutions which we will ``slide" down until we touch $w$ and get a contradiction. We start by showing that for $t$ large we have $v_{s+t} \geq w$. Indeed, if $t > \rho + H + L -s$ then $v_{s+t} > 0$ in $\overline{D}$ and
\[
\Delta v_{s+t} \le 0\quad\text{in}\quad \overline{D}\qquad\text{if}\quad t > \rho + H + L -s
\]
(in the viscosity sense). On the other hand, for all $t \ge 0$, 
\[
v_{s+t} \ge  u= w\quad\text{on}\quad\partial D, 
\]
so that by maximum principle (since $\Delta w \ge 0$ in $D$) 
\[
v_{s+t} \ge w \quad\text{in}\quad \overline{D}\qquad\text{if}\quad t > \rho + H + L -s. 
\]

Let us define now 
\[
t_* := \inf\{t \ge 0 : v_{s+t}\ge w \text{ in }\overline{D}\} =  \inf\{t \ge 0 : v_{s+t}\ge w \text{ on }\overline{D'\times \{0\}}\}, 
\]
where the second equality follows from the maximum principle (applied in $D\cap \{y > 0\}$).  By \eqref{eq:fromfrom_s}, $t_* > 0$.

\medskip

\noindent \emph{Step 3: The contact point in the sliding argument}\newline
\indent From \eqref{eq:from3_s} (taking $\rho$ smaller if necessary, depending only on~$s$) for any $t > 0$
\begin{equation}
\label{eq:from4_s}
v_{s+t} \ge v_s \geq u+\frac{\delta}{2} = w + \frac{\delta}{2} \quad\text{on}\quad \overline{\left\{u > 0\right\}\cap \partial D\cap\{y = 0\}}.
\end{equation}
 By continuity and \eqref{eq:from4_s}, together with the monotonicity in the $\be_n$ direction, there exists a touching point $\bar x \in \overline{\{w > 0\}\cap (D'\times \{0\})}$, i.e. $
v_{s+t_*}(\bar x) = w(\bar x)
$. 

We claim that $v_{s+t^*}(\bar{x}) = w(\bar{x}) = 0$. Indeed if $\bar x\in \{w > 0\}\cap (D'\times\{0\})$, then $\Delta v_{s+t_*}(\bar x) \le 0 =  \Delta w(\bar x)$ with $v_{s+t_*}\ge w$ in $D$, contradicting the maximum principle in $\{w > 0\} \cap D$. Furthermore, $\bar x\notin \{w > 0\}\cap \partial D\cap \{y = 0\}$, by \eqref{eq:from4_s}. 

\medskip

\noindent \emph{Step 4: The contradiction}\newline
\indent This leaves us two cases to consider, either the touching point is inside $D'\times\{0\}$ or on $\partial D'\times \{0\}$. 

If $\bar x\in F(w)\cap (D'\times \{0\})$, we can proceed as in the proof of Lemma~\ref{lem:cont_graph1} to say that $\partial \{w > 0\}$ has an exterior touching ball at $\bar{x}$ and thus $\bar{x}$ is a regular point (c.f. \cite[Proposition 5.10]{EKPSS20}). 

By the free boundary condition for minimizers,
\[
w(x,y) \ge U((x-\bar{x})\cdot \nu, y) +o(|(x,y)-(\bar x,0)|^{\frac12}),\]
where $\nu$ is the inward pointing unit normal to the ball at $\bar{x}$. In the other direction, Lemma \ref{PropertyOfConvolutionsThin} implies
\[
v_{s+t_*}(x) \le  U((x-\bar{x})\cdot \nu, y) +o(|(x,y)-(\bar x,0)|^{\frac12})
\]
This contradicts the nonlocal Hopf's lemma in this interior touching ball (see \cite[Proposition 4.11]{CS14}). 

So we are left to consider the case $\bar x\in \overline{F(w)\cap (D'\times \{0\})}\cap (\partial D'\times \{0\})$. Since $v_{s+t_*}\ge w$ in $\overline{D}$,   we also have $\bar x\in \partial_{\mathbb R^n}\{v_{s+t_*} > 0\}$. From \eqref{eq:from4_s}, $\bar x\notin F(u)$ and thus there is a neighborhood,  $Z\subset \partial D'$, of $\bar x $ where $u= w = 0$ on $Z\times\{0\}$. 

By our assumption that the contact set for $u$ is the subgraph of a continuous function in $\Omega \Supset \overline{D}$,  there is a small $\theta > 0$ such that $B'_{4\theta}(\bar x)\times\{0\} \subset \Lambda(u)\cap \Omega$. Using the harmonicity of $u$ in $\Omega \cap \{y > 0\}$ we can assume that $|u(x,y)| \le C_2|y|$ in $B_{2\theta}(\bar x)$. From this we can first conclude that $w=0$ on $(B'_{2\theta}(\bar x) \cap \partial D')\times \{0\}$ and that $|w| \leq C_2|y|$ on $B_{2\theta}(\bar x)\cap \partial D$ so we can invoke Theorem~\ref{thm:nondeg2} to get
\begin{equation}\label{e:wrhonondeg}
\sup_{B_r(\bar x)\cap D} w \ge c_1 r^{\frac12}\quad\text{for all}\quad r\in (0, \theta). 
\end{equation}

Furthermore, if $\varphi$ solves the boundary value problem 
\[
\left\{
\begin{array}{rcll}
\Delta \varphi & = & 0 & \quad\text{in}\quad B_\theta(\bar x) \cap D\\
\varphi & = & C_2 |y| & \quad\text{on}\quad B_\theta(\bar x) \cap \partial D\\
 \varphi & = & 1 & \quad\text{on}\quad \partial B_\theta(\bar x) \cap D,
\end{array}
\right.
\]
then $w \leq C\varphi$ in $B_\theta(\bar x)\cap D$ by the maximum principle (recall that $w$ is a minimizer so $\Delta w \geq 0$).

Note that the values of $\varphi$ are Lipschitz continuous in $B_{3\theta/4}(\bar x)\cap \partial D$, so we may invoke boundary Schauder estimates for harmonic functions to conclude that for any $\eps > 0$ there exists $c_\eps > 0$ such that
\begin{equation}
\label{eq:phiboundabove}
\varphi(x) = |\varphi(x) - \varphi(\bar x)| \le C_\eps |x-\bar x|^{1-\eps}\quad\text{in}\quad B_{\theta/2}(\bar x).
\end{equation}

 Taking $\eps = \frac14$ in \eqref{eq:phiboundabove} and invoking \eqref{e:wrhonondeg} we get,
\[
c_1 r^{\frac12}\le \sup_{B_r(\bar x)\cap D} w \le \sup_{B_r(\bar x)\cap D} \varphi \le C_{\frac14} r^{\frac34}, 
\]
for all $r\in (0, \theta/2)$, which is a contradiction.
\end{proof}

The bound from the other direction proceeds exactly as it does in the local case (Lemma \ref{lem:cont_graph2}), and as such we will simply state the thin result without proof:

\begin{lem}\label{lem:minabove}
Under the same assumptions and using the same notation as Proposition \ref{prop:cont_graph_s} if $w$ is a minimizer to the functional \eqref{eq:JOm_frac} in $D$ with $w=u$ on $\partial D$ then $u\leq w$ in $D$. 
\end{lem}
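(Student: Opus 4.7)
The plan is to mirror the classical proof of Lemma~\ref{lem:cont_graph2}, replacing the sup-convolution of Definition~\ref{DefConvolutionsClassical} with its thin counterpart from Definition~\ref{Definitionthinsupconv} and the classical Hopf lemma with its nonlocal analogue in \cite[Proposition~4.11]{CS14}. A simplification compared with the proof of Lemma~\ref{lem:minbelow} is that this direction of the inequality avoids any use of the boundary regularity in Theorem~\ref{BoundaryRegularityIntro}: the fixed-boundary separation along $\partial D$ alone will be enough to rule out a touching point there.

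Argue by contradiction and suppose $u(x_\circ) > w(x_\circ) + \eta_\circ$ for some $x_\circ \in \overline{D}$ and $\eta_\circ > 0$; by the maximum principle applied in $D \cap \{y > 0\}$ we may take $x_\circ$ on the thin space. For $\tau \in \R$, set $u_\tau(x'', x_n, y) := u(x'', x_n + \tau, y)$ and fix $s > 0$ small so that $u_{-s}(x_\circ) > w(x_\circ) + \tfrac{1}{2}\eta_\circ$. The strong maximum principle applied to $\partial_n u$, together with the monotonicity from Proposition~\ref{thm:monotone_frac} and the uniform continuity of $f$, yields $\delta > 0$ with
\[
u \geq u_{-s} + \delta \quad \text{on} \quad \overline{\{u_{-s} > 0\} \cap (D' \times \{0\})}
\]
(and analogously on any fixed compact bulk subset of $\{u > 0\}$). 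Choosing $\rho > 0$ small in terms of $\delta$, $\eta_\circ$, and the modulus of continuity of $u_{-s}$, and defining the thin sup-convolution
\[
\tilde v_{-s-t}(x, y) := \sup_{z \in B'_\rho(x)} u_{-s-t}(z, y),
\]
we obtain $\tilde v_{-s} \leq u$ on $\overline{D}$, $\tilde v_{-s}(x_\circ) > w(x_\circ) + \tfrac{1}{4}\eta_\circ$, and, for every $t \geq 0$,
\[
\tilde v_{-s-t} \leq w - \tfrac{\delta}{2} \quad \text{on} \quad \overline{\{\tilde v_{-s-t} > 0\} \cap \partial D}.
\]
The hypothesis $f > -H$ further forces $\tilde v_{-s-t} \equiv 0 \leq w$ in $\overline{D}$ once $t + s > L + H + \rho$.

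Set $t_* := \inf\{t \geq 0 : \tilde v_{-s-t} \leq w \text{ in } \overline{D}\}$, which is finite and, thanks to the initial gap at $x_\circ$, strictly positive. A compactness argument produces a touching point $\bar x = (\bar x', \bar y) \in \overline{\{\tilde v_{-s-t_*} > 0\} \cap D}$ with $\tilde v_{-s-t_*}(\bar x) = w(\bar x)$. Since $\tilde v_{-s-t_*}$ is subharmonic and $w$ is harmonic in $\{w > 0\} \cap D$ (Lemma~\ref{PropertyOfConvolutionsThin}), the strong maximum principle prohibits a touching point in the interior of the common positive set. In particular, if $\bar y \neq 0$ then the strong maximum principle applied to $\tilde v_{-s-t_*} - w$ on the connected component of $\{\tilde v_{-s-t_*} > 0\} \cap \{y \neq 0\} \cap D$ containing $\bar x$ would force equality on that component all the way to $\partial D \cap \{y \neq 0\}$, violating the boundary separation. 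The same separation excludes $\bar x \in \partial D$. Hence $\bar x \in F(\tilde v_{-s-t_*}) \cap D$ and $\bar y = 0$.

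To derive the contradiction at $\bar x$, invoke Lemma~\ref{PropertyOfConvolutionsThin}: there is an interior disk $B'_\rho(p) \subset \{\tilde v_{-s-t_*} > 0\} \cap \{y = 0\}$ with $\bar x \in \partial B'_\rho(p)$, and along the thin space
\[
\tilde v_{-s-t_*}(x', 0) \geq \langle x' - \bar x', \nu\rangle_+^{1/2} + o(|x' - \bar x'|^{1/2}), \qquad \nu := \tfrac{1}{\rho}(p' - \bar x').
\]
Because $\tilde v_{-s-t_*} \leq w$, this disk also lies in $\{w > 0\} \cap \{y = 0\}$, so $\bar x$ is a one-sided regular free boundary point of the minimizer $w$ in the sense of \cite[Proposition~5.10]{EKPSS20}; the thin free boundary condition then furnishes the matching upper expansion $w(x', y) \leq U((x' - \bar x') \cdot \nu, y) + o(|(x', y) - (\bar x, 0)|^{1/2})$. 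Comparing these expansions for the nonnegative function $w - \tilde v_{-s-t_*}$, which is superharmonic near $\bar x$ and harmonic off the thin space, contradicts the nonlocal Hopf-type lemma \cite[Proposition~4.11]{CS14}. The principal technical difficulty I anticipate is promoting the thin-space lower asymptotic of $\tilde v_{-s-t_*}$ to a bulk expansion compatible with that of $w$, so that the nonlocal Hopf argument applies; this should follow from harmonicity of $\tilde v_{-s-t_*} - U$ in $\{y \neq 0\}$ near $\bar x$ together with the Poisson representation of $U$ away from its zero set.
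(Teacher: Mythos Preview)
Your proposal is correct and follows exactly the approach the paper intends: the paper states this lemma without proof, noting only that it ``proceeds exactly as it does in the local case (Lemma~\ref{lem:cont_graph2}),'' and your write-up is a faithful adaptation of that classical argument to the thin setting using the thin sup-convolution of Definition~\ref{Definitionthinsupconv}, Lemma~\ref{PropertyOfConvolutionsThin}, and the nonlocal Hopf lemma from \cite{CS14}, mirroring the structure of Lemma~\ref{lem:minbelow}. The technical concern you raise at the end about promoting the thin-space expansion of $\tilde v_{-s-t_*}$ to a bulk expansion is precisely the same issue that appears (and is dispatched tersely) in the paper's own proof of Lemma~\ref{lem:minbelow}, so it introduces no new difficulty here.
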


Thus,  we obtain:
\begin{proof}[Proof of Proposition~\ref{prop:cont_graph_s}]
It is a combination of Lemmas~\ref{lem:minbelow} and \ref{lem:minabove}. 
\end{proof}

And:

\begin{proof}[Proof of Theorem~\ref{MinimizingThinIntro}]
It follows from Proposition \ref{thm:monotone_frac} and Proposition \ref{prop:cont_graph_s}
\end{proof}

\section{Flatness of graphical solutions: the thin case} 
\label{sec:7}

Let us now show an analogous result to Lemma~\ref{lem:interm_lem} in the nonlocal setting. Recall that this lemma showed that if $F(u)$ is smooth an $u$ is monotone in a direction then either that direction is transverse to $F(u)$ at every point or $u$ is independent of that direction:

In the following lemma we consider $\Omega'\subset \R^n$ to be a smooth domain (i.e. $C^{2,\alpha}$), and $\Omega = \Omega'\times\{0\}$.  Abusing notation, we let $\partial \Omega$ denote the boundary of $\Omega'$ inside of $\R^n$ and denote by $\nu(x)\in \mathbb{S}^{n}_0$ the unit outward normal vector to $\Omega'$ at $x\in \partial\Omega$. Finally, in order to avoid the statement being empty, we assume that $\partial\Omega\cap B_1\cap\{y = 0\} \neq \emptyset$. 

\begin{lem}
\label{lem:interm_lem_frac}
Let $u$ be a viscosity solution to 
\begin{equation}
\label{eq:interm}
\left\{
\begin{array}{rcll}
\Delta u &=& 0&\quad\text{in}\quad B_1\setminus \Omega\\
u &=& 0&\quad\text{in}\quad \Omega\cap B_1\\
\lim_{x\to x_\circ} \frac{u}{d^{1/2}}(x) &=& 1&\quad\text{for}\quad x_\circ\in \partial \Omega \cap B_1, x\in \{y = 0\} \cap (B_1\setminus\Omega)\\
u &\ge& 0&\quad\text{in}\quad B_1.
\end{array}
\right.
\end{equation}
 We assume also that, for some $\be \in \mathbb{S}^{n}_0$,  
 $$\partial_\be u \ge 0\text{ in $B_1$.}$$ 

Then, either $\nu(x) \cdot \be > 0$ for all $x\in  \partial \Omega\cap B_1$, or $\partial_\be u \equiv 0$ in $B_1$.  
\end{lem}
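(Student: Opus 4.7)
The plan is to mirror the classical argument of Lemma~\ref{lem:interm_lem}: argue by contradiction, upgrade $\partial_\be u\ge 0$ to strict positivity, and then produce incompatible lower and upper bounds on $v:=\partial_\be u$ along the inward normal ray at a tangency point. Suppose the conclusion fails, so $v\not\equiv 0$ in $B_1$ and there exists $x_\circ\in\partial\Omega\cap B_1$ with $\nu_\circ\cdot\be=0$, where $\nu_\circ:=\nu(x_\circ)$. Since $\be\in\mathbb{S}^n_0$, differentiating \eqref{eq:interm} in $\be$ yields $\Delta v=0$ in $B_1\setminus\Omega$, while $v=0$ on $\Omega$ because $u=0$ there and $\be$ is tangent to the thin space. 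As $B_1\setminus\Omega$ is connected and $v\not\equiv 0$, the strong maximum principle forces $v>0$ in $B_1\setminus\Omega$.

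For the Hopf-type lower bound I would apply the boundary Harnack inequality in slit domains (\cite{DS20}, as already used in the proof of Proposition~\ref{thm:monotone_frac}) to the pair $(v,u)$ of positive harmonic functions vanishing on the $C^{2,\alpha}$ slit $\Omega$. This gives $v/u\ge c_0>0$ on any nontangential approach to $x_\circ$; combined with the free boundary condition $u(x_\circ+t\nu_\circ)=t^{1/2}+o(t^{1/2})$ this yields
\[
v(x_\circ+t\nu_\circ)\ge c_0\,t^{1/2}+o(t^{1/2}),\qquad t\downarrow 0.
\]

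For the matching upper bound I would use the $C^{2,\alpha}$ regularity of $\partial\Omega$ and the normalization of the leading coefficient in \eqref{eq:interm} to obtain an expansion
\[
u(x)=U(d(x),y)+E(x),\qquad E(x)=o\bigl(|x-x_\circ|^{1/2}\bigr)
\]
near $x_\circ$, where $d(x)$ is the signed distance from $x$ to $\partial\Omega$ in $\{y=0\}$ (positive outside $\Omega$). The key geometric fact is that $\nabla d\equiv\nu_\circ$ along the entire normal ray $\{x_\circ+t\nu_\circ:t>0\}$, so that
\[
\partial_\be U(d(x),y)\big|_{x=x_\circ+t\nu_\circ}=U_d(t,0)\,(\be\cdot\nu_\circ)=0,
\]
and the tangency $\be\cdot\nu_\circ=0$ annihilates the only term that could have matched the Hopf rate. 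Interior gradient estimates applied to $E$ on balls $B_{t/2}(x_\circ+t\nu_\circ)$ (where $u$ is smooth and harmonic) then give $\partial_\be E(x_\circ+t\nu_\circ)=o(t^{1/2})$, hence $v(x_\circ+t\nu_\circ)=o(t^{1/2})$. This contradicts the Hopf bound of the previous paragraph, proving the lemma.

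The main obstacle is the quantitative expansion above: the plain bound $u-U(d,y)=o(|x-x_\circ|^{1/2})$ does not obviously pass to a tangential derivative, since $U(d,y)$ is only harmonic when $\partial\Omega$ is flat, so $E$ satisfies an inhomogeneous equation with forcing produced by the curvature of $\partial\Omega$. A robust alternative route is a compactness/blow-up argument on the rescalings $u_r(x):=r^{-1/2}u(x_\circ+rx)$, which converge to the half-plane solution $U(x\cdot\nu_\circ,y)$ by the free boundary condition of \eqref{eq:interm}; one then quantifies the convergence of $\partial_\be u_r$ to $\partial_\be U\equiv 0$ off the contact set and rescales back, reaching the same $o(t^{1/2})$ upper bound.
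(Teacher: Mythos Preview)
Your overall architecture coincides with the paper's: argue by contradiction, obtain a Hopf-type lower bound $v\ge c_0 u$ via the boundary Harnack inequality in slit domains, and contrast it with an upper bound $v(x_\circ+t\nu_\circ)=o(t^{1/2})$ coming from the tangency $\be\cdot\nu_\circ=0$. The lower-bound half is fine and is essentially what the paper does (it introduces an auxiliary harmonic $h$ to handle the fact that $\partial_\be u$ need not vanish on all of $F(u)$, but this is a minor refinement).

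The gap is in the upper bound. Your claim that interior gradient estimates on $B_{t/2}(x_\circ+t\nu_\circ)$ give $\partial_\be E(x_\circ+t\nu_\circ)=o(t^{1/2})$ is off by a full power of $t$: from $\|E\|_{L^\infty(B_{t/2})}=o(t^{1/2})$ one gets at best $|\nabla E|\le C\,t^{-1}\cdot o(t^{1/2})=o(t^{-1/2})$, which is useless against $c_0 t^{1/2}$. (And, as you note, $E$ is not harmonic, so even this estimate requires work.) The blow-up alternative has the same defect: locally uniform convergence $u_r\to U$ does not by itself quantify the rate at which $\partial_\be u_r\to 0$; extracting that rate is exactly the missing regularity.

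The paper closes this gap by invoking the higher-order boundary expansion of De~Silva--Savin \cite[Theorem~3.1]{DS15}, which provides not merely $u=\tilde U\,(c_*+O(|(x',r)|))$ but a direct expansion of the tangential derivatives,
\[
\partial_i u=\frac{\tilde U}{r}\Bigl(r\,\boldsymbol a^{(1)}_i+O(|(x',r)|^{1+\alpha})\Bigr),\qquad 1\le i\le n-1.
\]
The free boundary condition $u/d^{1/2}\equiv 1$ along $\partial\Omega$ forces the tangential first-order coefficients $\boldsymbol a^{(1)}_i$ to vanish, leaving $\partial_\be u=O(t^{1/2+\alpha})$ along the normal ray, which is the genuine $o(t^{1/2})$ upper bound. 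In short, the step you flag as ``the main obstacle'' is indeed the crux, and it is not bypassed by a blow-up: one needs the $C^{1,\alpha}$-type expansion for the derivative, one order beyond the expansion $u=U(d,y)+o(|x-x_\circ|^{1/2})$ that you use.
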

\begin{proof}
By assumption, we immediately have $\nu(x) \cdot \be \ge 0$ for $x\in \partial \Omega$. Let us argue by contradiction, and so we may assume (up to a  rotation and translation) that $0\in \partial\Omega$ and $\be = \be_1$, with $\nu(0) = \be_n$ (so $\nu(0) \cdot \be = 0$). Let us also suppose $\partial_1 u \not\equiv 0$ in $B_1$. 

We denote by $\delta = \delta(x')$ the signed distance to $\partial \Omega$ inside of $\mathbb R^n$,
\[
\delta(x') = 
\left\{\begin{array}{rl}
{\rm dist}(x',\partial \Omega'),&\qquad\text{for any} \quad x'\in B_1\setminus\Omega',\\
-{\rm dist}(x', \partial \Omega'),&\qquad\text{for any}\quad x'\in \Omega'\cap B_1,
\end{array}
\right.
\]
whereas $r$ denotes the distance to $\partial\Omega$ in $B_1$, 
\[
r (x) = r((x', y)) = \sqrt{\delta^2(x') + y^2}, \qquad\text{for any}\quad x = (x', y) \in B_1.
\]
We define
\[
\tilde U := 2^{-1/2}(r+\delta)^{1/2}.
\]
Then, by \cite[Theorem 3.1]{DS15} we know that a solution to \eqref{eq:interm} can be expanded around a free boundary point (in this case, 0) as 
\begin{equation}
\label{eq:there}
\begin{split}
u(x', y) &  = \tilde U(x) \left(P_0(x', r) +O(|(x', r)|^{1+\alpha}\right)\\
& = \tilde U(x) \left( a^{(0)} + \boldsymbol{a}^{(1)}\cdot x' + a^{(2)} r + O(|(x', r)|^{1+\alpha})\right)  ,
\end{split}
\end{equation}
for some polynomial $P_0$ of degree 1. By the viscosity condition, we immediately have $a^{(0)} = c_*$. Moreover, by \cite[Theorem 3.1]{DS15}, we know 
\[
\frac{u(x',y)}{\tilde U(x)}\bigg|_{y = 0} \equiv \frac{u(x',0)}{\delta^{1/2}_+}(x') =: \eta(x') \in C^{1,\alpha}(\overline{B_1'\setminus \Omega'}).
\]
Since $\eta \equiv c_*$ on $\partial\Omega$, $\partial_i \eta(0) = 0$ for $1\le i \le n-1$  (recall $\nu(0) = \be_n$), and hence in \eqref{eq:there} we get $\boldsymbol{a}^{(1)}_i = 0$ for $1\le i \le n-1$.

On the other hand, by \cite[Theorem 3.1, eq. (3.4)]{DS15}, for $1\le i\le n-1$,
\[
\partial_i u (x) =  \frac{\tilde U(x)}{r}\left(P_{0,i}(x',r) + O(|(x', r)|^{1+\alpha})\right), 
\]
with 
\[
P_{0,i}(x', r) = r \boldsymbol{a}^{(1)}_i,\qquad\text{for}\quad 1\le i \le n-1.
\]
Combining the above,
\[
\partial_1 u(x) = \frac{\tilde U(x)}{r} O(|(x', r)|^{1+\alpha}).
\]

Let $h$ be harmonic outside of $\Lambda(u)$ with boundary values equal to $\partial_1 u$ on $\partial B_{3/4}$ and equal to $0$ on $\Lambda(u)$. By boundary Harnack for slit domains \cite[Corollary 3.4]{DS20} there exists a constant $c > 0$ such that $h \geq cu$ inside $B_{1/2}$. On the other hand, $\partial_1u \geq h$ in $B_{3/4}$ as both are harmonic in $B_{3/4}\backslash \Lambda(u)$ but $\partial_1 u \geq 0$ on $F(u)$. Using the expansion above this yields $$O(|(x', r)|^{1+\alpha}) \ge  r\left( c_*  + O(|(x', r)|)\right),$$ which gives a contradiction as $|(x',r)| \downarrow 0$.  
\end{proof}

As in the local setting, this tells us that homogeneous minimizers in low-dimensions are one-dimensional. 

\begin{prop}\label{p:lowdimonehomogenthin}
Let $u\in \mathcal G_s(\be_n)$ in $\mathbb R^{n+1}$ with 
\[
n \leq n^*_{\rm thin} + 1.
\]
 If $u$ is a homogeneous minimizer, then $$u = U((x\cdot \be), y),$$ for some $\be \in \mathbb S_0^{n}$ with $\be \cdot \be_n \geq 0$, and $U$ given by \eqref{HalfSpaceSolutionThin}.  
\end{prop}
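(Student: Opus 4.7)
The argument follows the blueprint of Proposition~\ref{prop:main} in the local case, substituting Lemma~\ref{TraceSmoothThin} for Lemma~\ref{TraceSmoothClassical}, Proposition~\ref{thm:monotone_frac} for Lemma~\ref{lem:monotone}, Corollary~\ref{cor:closedgraph2} for Corollary~\ref{cor:closedgraph}, and Lemma~\ref{lem:interm_lem_frac} for Lemma~\ref{lem:interm_lem}. To begin, Lemma~\ref{TraceSmoothThin} combined with the hypothesis $n\le n^*_{\rm thin}+1$ makes $F(u)\cap \mathbb{S}^n_0$ a smooth codimension-one submanifold of $\mathbb{S}^n_0$ carrying a continuous inward unit normal $\nu:F(u)\cap \mathbb{S}^n_0\to \mathbb{S}^n_0$. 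The graphicality $u\in \mathcal{G}_s(\be_n)$ gives $\be_n\cdot\nu\ge 0$ on the trace. We select
\[
\be_\circ \in \argmin_{\bar\be\in \mathbb{S}^n_0}\Bigl\{\bar\be\cdot \be_n : \bar\be\cdot \nu(x)\ge 0 \text{ for every } x\in F(u)\cap \mathbb{S}^n_0\Bigr\},
\]
which exists by compactness of $\mathbb{S}^n_0$, and argue as in the proof of Proposition~\ref{prop:main} (a slight tilt of $\be_\circ$ would otherwise remain admissible while strictly decreasing $\bar\be\cdot \be_n$) that there must be some $p_\circ\in F(u)\cap \mathbb{S}^n_0$ realizing $\be_\circ\cdot\nu(p_\circ)=0$.

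We will next split into two cases. If $\be_\circ=\be_n$, Proposition~\ref{thm:monotone_frac} yields $\partial_{\be_n}u\ge 0$, and applying Lemma~\ref{lem:interm_lem_frac} on a small ball around $p_\circ$ (after a rigid motion placing $p_\circ$ at the origin, sending $\nu(p_\circ)$ to $\be_n$ and $\be_\circ$ to $\be_1$), the transversality $\be_\circ\cdot\nu(p_\circ)=0$ rules out the first alternative and forces $\partial_{\be_\circ}u\equiv 0$ on that ball. If instead $\be_\circ\ne \be_n$, minimality of $\be_\circ$ compels $\be_n\cdot\nu>0$ strictly on the full trace; hence each perturbed direction $\be_\delta:=(\be_\circ+\delta\be_n)/|\be_\circ+\delta\be_n|$ with $\delta>0$ still satisfies $\be_\delta\cdot\nu>0$, and by the implicit function theorem together with the one-homogeneity of $u$ the free boundary is a smooth graph over $\be_\delta^\perp$, so that $u\in \mathcal{G}_s(\be_\delta)$. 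Letting $\delta\downarrow 0$ and invoking Corollary~\ref{cor:closedgraph2} yields $u\in \mathcal{G}_s(\be_\circ)$, and then Proposition~\ref{thm:monotone_frac} followed by Lemma~\ref{lem:interm_lem_frac} applied at $p_\circ$ with direction $\be_\circ$ again produces $\partial_{\be_\circ}u\equiv 0$ near $p_\circ$.

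In both cases the local vanishing of $\partial_{\be_\circ}u$ will propagate globally: $u$ is real analytic on the open set $\{u>0\}$, which is connected for nontrivial global minimizers of the thin one-phase functional, so $\partial_{\be_\circ}u\equiv 0$ throughout $\{u>0\}$. Combined with the one-sided bound $\partial_{\be_\circ}u\ge 0$ and continuity, this forces $u$ to be invariant under translation by $\be_\circ$ on all of $\mathbb{R}^{n+1}$, so its restriction to $\be_\circ^\perp\cong \mathbb{R}^{(n-1)+1}$ is a nontrivial, one-homogeneous minimizer of $\mathcal{J}^0$. Since $n-1\le n^*_{\rm thin}$, the definition \eqref{LargestDimensionThin} identifies this restriction with a rotation of the half-plane solution \eqref{HalfSpaceSolutionThin}, so $u=U(x\cdot\be, y)$ for some $\be\in \mathbb{S}^n_0$; the sign $\be\cdot\be_n\ge 0$ then follows from $u\in \mathcal{G}_s(\be_n)$.

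The step I expect to demand the most care is the passage from the local to the global vanishing of $\partial_{\be_\circ}u$: it requires both the connectedness of $\{u>0\}$ for a nontrivial global minimizer in the thin setting (the analogue of the classical fact used in the proof of Proposition~\ref{prop:main}) and an analyticity–monotonicity bootstrap ensuring that the $\be_\circ$-lines do not re-enter $\{u>0\}$ after crossing $\Lambda(u)$, so that $\Lambda(u)$ itself turns out to be $\be_\circ$-invariant and the dimensional reduction actually goes through.
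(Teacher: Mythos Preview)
Your proposal is correct and follows essentially the same route as the paper's own proof: the same choice of $\be_\circ$, the same two-case split, the same appeal to Lemma~\ref{lem:interm_lem_frac} at the touching point $p_\circ$, and the same dimensional reduction via the definition of $n^*_{\rm thin}$. Regarding your closing concern, note that in the thin setting the propagation step is actually \emph{simpler} than in the classical case: for a nontrivial minimizer the zero set is contained in the hyperplane $\{y=0\}$, so $\{u>0\}=\R^{n+1}\setminus\Lambda(u)$ is automatically (path-)connected, and once $\partial_{\be_\circ}u\equiv 0$ on this dense open set the $\be_\circ$-invariance of $u$ (and hence of $\Lambda(u)$) follows by continuity without any further bootstrap.
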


\begin{proof}
The restriction on the dimension implies, by Lemma \ref{TraceSmoothThin}, that $\nu(x)$ exists and is a continuous function of $x\in F(u)\cap \mathbb S_0^n$. As in the classical setting, we consider $\be_\circ\in \mathbb{S}_0^{n}$ such that
\[
\be_\circ\in \argmin_{\bar\be \in \mathbb{S}_0^{n}} \left\{\bar\be \cdot\be_n : \bar\be \cdot \nu(x) \geq 0, \forall x\in \partial \Omega \cap \mathbb S_0^{n}\right\}.
\]
Arguing as in the local setting by minimality, there exists a $p_\circ\in F(u)\cap \mathbb{S}_0^{n}$ such that $ \be_\circ\cdot \nu(p_\circ) = 0$. If $\be_\circ = \be_n$ then we can invoke Lemma \ref{lem:interm_lem_frac} (recalling that $F(u)$ is smooth away from $0$  by Lemma \ref{TraceSmoothThin}) to conclude that $u$ is invariant in the direction $\be_\circ$. We do not need to worry about connectivity in $B_{1/2}(p_\circ)$ because of the assumption that $p_\circ\in F(u)$. Furthermore, the positivity set of any (nontrivial) global solution to the thin free boundary problem is connected.

If $\be_\circ \neq \be_n$, then arguing as in the classical case (but invoking Corollary \ref{cor:closedgraph2}) we see that $u\in \mathcal{G}_s(\be_\circ)$ and then Proposition~\ref{thm:monotone_frac} implies that $u$ is monotone in the direction $\be_\circ$. We again apply Lemma \ref{lem:interm_lem_frac} to conclude that $u$ is invariant in the direction $\be_\circ$. 

%Moreover, by minimality, $\be_\circ$ is tangent to $F(u)$ at some point $p\in F(u)$ (recall that $F(u)$ is smooth away from $0$ by Lemma \ref{TraceSmoothThin}). 

In either case, restricting $u$ to the space perpendicular to $\be_\circ$ gives a minimizing cone in $\mathbb R^{n}$. The definition of $n^*_{\rm thin}$ implies that $u$ is a one-dimensional solution. 
\end{proof} 

Finally, a blow-down argument shows us that global minimizers in low dimensions with graphical free boundaries are one dimensional. 

\begin{cor}\label{globalflat}
Let $u \in \mathcal G_s(\be_n)$ in $\mathbb R^{n+1}$ with \[
n \leq n_{\rm thin}^*+1.
\]
 If $u$ is a global minimizer to the thin Alt--Caffarelli functional \eqref{eq:JOm_frac},  then 
 \[
 u = U(x\cdot\be, y)
 \] for some $\be \in \mathbb S_0^n$ with $\be \cdot \be_n \geq 0$, and $U$ given by \eqref{HalfSpaceSolutionThin}.  
\end{cor}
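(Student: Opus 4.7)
The plan is to mirror the classical argument carried out in Corollary \ref{cor:main000}, replacing each classical ingredient by its thin-problem analogue. Concretely, the strategy is: (i) blow the minimizer down to obtain a homogeneous minimizer in the class $\mathcal{G}_s(\be_n)$; (ii) invoke the low-dimensional classification of such cones to conclude the blow-down is a half-plane solution; (iii) transfer the resulting large-scale flatness back to every scale by iterating the improvement-of-flatness lemma.

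First, I would consider the rescalings
\[
u_R(x) := \frac{u(Rx)}{R^{1/2}},\qquad R>0,
\]
which are themselves global minimizers of the thin Alt--Caffarelli functional and still lie in $\mathcal{G}_s(\be_n)$ (graphicality is scale invariant). Applying Lemma \ref{lem:nonlocalblowdown} along a sequence $R_i\uparrow\infty$, one extracts a subsequential limit $v$ which is a nonzero $1/2$-homogeneous global minimizer, with locally uniform convergence of $u_{R_i}$ to $v$ and Hausdorff convergence of free boundaries. By the stability result Corollary \ref{cor:closedgraph2}, $v\in\mathcal{G}_s(\be_n)$ as well. Proposition \ref{p:lowdimonehomogenthin} then forces
\[
v(x) = U(x\cdot\be',y)
\]
for some $\be'\in\mathbb{S}_0^n$ with $\be'\cdot\be_n\ge 0$, where $U$ is as in \eqref{HalfSpaceSolutionThin}. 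This is the key structural input: the graphicality assumption, which passes to blow-downs by Corollary \ref{cor:closedgraph2}, reduces the homogeneous case to a half-plane solution in dimensions $n\le n^*_{\rm thin}+1$.

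Next, I would promote this blow-down flatness to flatness at every scale, exactly as in the classical case. Given $\varepsilon>0$, the locally uniform convergence $u_{R_i}\to v$ on, say, $B_1$ yields
\[
U\bigl(x\cdot\be'-\varepsilon,y\bigr)\le u_{R_i}(x)\le U\bigl(x\cdot\be'+\varepsilon,y\bigr)\qquad\text{in } B_1,
\]
for all $i$ large (here one uses the one-sided bounds that come from pairing the uniform closeness with the asymptotics of $U$ near its free boundary, together with Hausdorff convergence of $F(u_{R_i})$ to $F(v)$ to ensure that a free boundary point of $u_{R_i}$ lies near the origin after translation). Iterating Lemma \ref{l:improvethinflat} produces a sequence of unit vectors $\be_k\in\mathbb{S}_0^n$ and scales $r^k$ such that
\[
U\bigl(x\cdot\be_k-\varepsilon(r/2)^k,y\bigr)\le u_{R_i}(x)\le U\bigl(x\cdot\be_k+\varepsilon(r/2)^k,y\bigr)\qquad\text{in } B_{r^k},
\]
which, rescaled back to $u$, gives the same bound in $B_{R_ir^k}$ with error $\varepsilon R_i(r/2)^k$. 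Taking $R_i$ and $k$ large so that this error is arbitrarily small and the ball $B_{R_ir^k}$ exhausts any fixed compact set, and using the compactness of the set of half-plane solutions, one concludes that $u$ itself coincides with a half-plane solution $U(x\cdot\be,y)$ on every compact subset of $\R^{n+1}$, hence globally. The condition $\be\cdot\be_n\ge 0$ is immediate from $u\in\mathcal{G}_s(\be_n)$.

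The main obstacle in implementing the above is the bookkeeping in step (iii): translating locally uniform closeness between $u_{R_i}$ and the half-plane solution $v$ into the precise two-sided trapping required as the hypothesis of Lemma \ref{l:improvethinflat} (i.e.\ $U(x\cdot\be'-\varepsilon,y)\le u_{R_i}\le U(x\cdot\be'+\varepsilon,y)$, with a free boundary point at the origin). Because $U$ has only $C^{1/2}$ regularity near $F(U)$, care is needed to pass from $\|u_{R_i}-v\|_{L^\infty}$ small to a genuine horizontal trapping of free boundaries; the Hausdorff convergence of $F(u_{R_i})$ in Lemma \ref{lem:nonlocalblowdown} is the natural tool, after a small translation to place a free boundary point of $u_{R_i}$ at the origin. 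Once this trapping is established, the iteration and conclusion proceed as in the classical argument of Corollary \ref{cor:main000}.
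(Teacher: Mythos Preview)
Your proposal is correct and follows essentially the same approach as the paper's own proof: blow down via Lemma \ref{lem:nonlocalblowdown}, use Corollary \ref{cor:closedgraph2} and Proposition \ref{p:lowdimonehomogenthin} to identify the limit as a half-plane solution, then iterate Lemma \ref{l:improvethinflat} exactly as in Corollary \ref{cor:main000}. In fact you supply considerably more detail than the paper, which simply writes ``Arguing as in the proof of Corollary \ref{cor:main000}, we can now apply the improvement of flatness Lemma \ref{l:improvethinflat} to conclude''; your discussion of the trapping bookkeeping and the use of Hausdorff convergence of free boundaries to center a free boundary point at the origin is the correct way to fill in what the paper leaves implicit.
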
 

\begin{proof}
Consider the blow-down $$u_R(X) = \frac{u(RX)}{R^{1/2}},$$ as $R\uparrow \infty$. By Lemma \ref{lem:nonlocalblowdown} we have $$u_{R_i} \rightarrow v\qquad \text{locally uniformly}$$ along some subsequence $R_i$ where $v$ is a homogeneous minimizer to the thin one-phase problem. By Proposition \ref{p:lowdimonehomogenthin} we have that $v = U((x\cdot \be), y)$. Furthermore $\be \cdot \be_n \geq 0$ since $v\in \mathcal G_s(\be_n)$ by Corollary \ref{cor:closedgraph2}. 

Arguing as in the proof of Corollary \ref{cor:main000}, we can now apply the improvement of flatness Lemma \ref{l:improvethinflat} to conclude. 
\end{proof}

From this result the main theorem in the thin setting follows immediately: 

\begin{proof}[Proof of Theorem \ref{thm:mainThin}]
By Proposition~\ref{prop:cont_graph_s} $v$ is a globally defined minimizer to the thin one phase functional \eqref{eq:JOm_frac}. The theorem then follows after invoking Corollary \ref{globalflat}
\end{proof}

\section{Boundary growth near the fixed boundary for minimizers to the thin functional}
\label{sec:8}
This section is devoted to proving the key growth result we need to complete the proof that all monotone viscosity solutions to the thin problem are minimizers. 

Throughout this section, $w$ is a minimizer of the thin one-phase energy $\mathcal{J}^0_\Omega$ in the domain $\Omega = B_1\cap \{x_1 \ge 0\}\subset \R^{n+1} $. We denote by $\Gamma$ its free boundary, which is the boundary of $\{w > 0\}$ in the relative topology of $\{y = 0\}$. In particular, $\Gamma = \Gamma'\times\{0\}$. 

We will often use that if $w$ is minimizer, then $w_r(x) := \frac{w(rx)}{r^{1/2}}$ is a minimizer with its own boundary data as well. 

\begin{lem}[Nondegeneracy]
\label{lem:nondeg}
Let $w$ and $\Gamma$ as above. Let us suppose that $w = 0$ on $\{x_1 = y = 0\}\cap B_1$. Then
\[
w(x', 0) \ge c\, {\rm dist}^{\frac12} (x', \Gamma')\quad\text{for any}\quad x' \in B_{1/2}'\cap \{x_1 \ge 0\},
\]
for some $c$ depending only on $n$. 
\end{lem}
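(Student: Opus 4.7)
The plan is a blow-up/compactness contradiction. I would assume the statement fails; then there exist $x'_k \in \overline{\{w > 0\}}\cap B'_{1/2}\cap \{x_1\ge 0\}$ such that, setting $r_k := \mathrm{dist}(x'_k,\Gamma')$, one has $w(x'_k,0)/r_k^{1/2}\to 0$. First I would observe that $r_k\to 0$: on the compact set $\{x' : \mathrm{dist}(x', \Gamma')\ge r_0\}\cap \overline{\{w>0\}}\cap\overline{B'_{1/2}}$ the function $w(\cdot,0)$ is continuous and strictly positive, hence uniformly bounded below, which is incompatible with $w(x'_k,0)\to 0$ unless $r_k\to 0$.

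Next I would rescale: set $w_k(X) := w(x'_k + r_k X)/r_k^{1/2}$, which is a minimizer of the thin one-phase functional on $B_{1/r_k}\cap \tilde\Omega_k$, where $\tilde\Omega_k := (\Omega - (x'_k,0))/r_k$. Four features persist under rescaling: (i) $B'_1(0)\cap \Lambda(w_k)=\emptyset$, since $r_k = \mathrm{dist}(x'_k,\Gamma')$; (ii) there is a free boundary point of $w_k$ on $\partial B'_1(0)$; (iii) $w_k(0) = w(x'_k,0)/r_k^{1/2}\to 0$; and (iv) the hypothesis becomes $w_k = 0$ on $\{X_1 = -x'_{k,1}/r_k,\,y = 0\}\cap B_{1/r_k}$. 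Up to a subsequence, either $x'_{k,1}/r_k\to\infty$, so that $\tilde\Omega_k$ exhausts $\R^{n+1}$, or $x'_{k,1}/r_k\to a\in [0,\infty)$, so that the limit domain is the half-space $\{X_1\ge -a\}$.

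By the compactness of minimizers to the thin Alt--Caffarelli functional (in the spirit of Lemma \ref{lem:nonlocalblowdown}; cf.\ \cite{EKPSS20}), a subsequence of $w_k$ converges locally uniformly to a minimizer $w_\infty$ on the limit domain, with boundary data $w_\infty = 0$ on $\{X_1 = -a,\,y=0\}$ in the half-space case. The same compactness yields Hausdorff convergence of the contact sets, so property (i) transfers to $B'_1(0)\cap \Lambda(w_\infty) = \emptyset$, meaning $w_\infty > 0$ throughout $B'_1(0)$. But property (iii) gives $w_\infty(0) = 0$ with $0\in B'_1(0)$, a contradiction.

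The main obstacle is the borderline case $x'_{k,1}/r_k\to a<\infty$, where the blow-up lives on a half-space and carries a flat vanishing strip along the fixed boundary. I would need to verify that the compactness and, crucially, the Hausdorff convergence of free boundaries continue to hold up to this fixed boundary; the essential inputs are the standard $L^2$-type nondegeneracy and the uniform $C^{1/2}$ regularity of minimizers, both of which are local on the thin space and insensitive to this additional boundary degeneracy.
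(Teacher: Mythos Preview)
Your compactness approach is circular at the decisive step. You claim that Hausdorff convergence of the contact sets transfers property (i) to the limit, giving $B'_1(0)\cap \Lambda(w_\infty) = \emptyset$. But the direction you need --- that $B'_1 \subset \{w_k > 0\}$ for all $k$ forces $w_\infty > 0$ on $B'_1$ --- is precisely the pointwise nondegeneracy you are proving. The standard way to establish that implication is: $\mathrm{dist}(0, \Gamma'_k) \ge 1$ plus nondegeneracy gives $w_k(0) \ge c > 0$, hence $w_\infty(0) > 0$. Without it, nothing rules out collapse: since $w=0$ on $\{x_1=y=0\}$ one has $r_k \le (x'_k)_1$, so after rescaling the fixed boundary sits at distance $\ge 1$ and $w_k$ is a nonnegative harmonic function in the full ball $B_1$; Harnack together with $w_k(0)\to 0$ then forces $w_k\to 0$ uniformly on $B_{1/2}$. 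The limit $w_\infty\equiv 0$ on $B_{1/2}$ is perfectly consistent with $w_\infty$ being a minimizer --- no contradiction arises. The compactness lemmas you cite (e.g.\ \cite[Lemma 3.4]{EKPSS20}) take nondegeneracy as an input, so invoking them here is circular.

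The paper instead uses a direct energy competitor, following \cite{CRS10}. After rescaling so that $\mathrm{dist}(x',\Gamma')=1$, suppose $w(x',0)=\eps$. Harnack gives $w\le C_0\eps$ on $B_{1/2}(x',0)$, and the competitor $v=\min\{w,\eps\varphi\}$ (with $\varphi$ a fixed smooth cutoff vanishing on $B_{1/4}(x',0)$ and equal to $2C_0$ outside $B_{1/3}(x',0)$) removes $\mathcal{H}^n(B'_{1/4})$ from the thin positive set at Dirichlet cost $O(\eps^2)$; for small $\eps$ this beats $w$, contradicting minimality. Note that if you tried to repair your argument by extracting a contradiction from $w_k\to 0$ on $B_{1/2}$, you would end up building exactly this competitor --- so the compactness layer adds nothing.
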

\begin{proof}
If we denote $r(x') := {\rm dist} (x', \Gamma')$ and since $w = 0$ on  $\{x_1 = y = 0\}\cap B_1$, we always have that $B_{r(x')}((x',0))\subset \{w > 0\}\cap B_1\cap \{x_1\ge 0\}$ for $x'\in \{w > 0 \}\cap B_{1/2}'\cap \{x_1\ge 0\}$. 
The proof  now follows as in \cite[Theorem 1.2]{CRS10}.

Indeed, after a rescaling it is enough to show that if $x = (x', 0)$ is at distance~1 from the free boundary then $w(x) = \eps$ cannot be arbitrarily small. By the Harnack inequality we know that $C_0^{-1}\eps \le w \le C_0\eps$ in $B_{1/2}(x)$, and by defining $\varphi$ to be a smooth nonnegative function such that $\varphi = 0$ in $B_{1/4}(x)$ and $\varphi = 2C_0$ in $B_{1/2}(x)\setminus B_{1/3}(x)$ we have that 
\[
v = \min\{w, \eps \varphi\}
\]
is an admissible competitor for $w$ in $B_{1/2}(x)$. 

Then,
\[
\int_{B_{1/2}(x)} |\nabla v|^2 \le \int_{B_{1/2}(x)} |\nabla w|^2 + C\eps^2,
\]
and 
\[
\mathcal{H}^n\left(\{v > 0\}\cap  B'_{1/2}(x)\right)\le \mathcal{H}^n\left(\{w > 0\}\cap B'_{1/2}(x)\right) - \mathcal{H}^n\left(B'_{1/4}(x)\right).
\]
Consequently, if $\eps > 0$ small enough depending only on $n$, we have 
\[
\mathcal{J}^0_{B_{1/2}(x)}(v) < \mathcal{J}^0_{B_{1/2}(x)}(w)
\]
which is a contradiction with the minimality of $w$.
\end{proof}

On the other hand, we also have the following result on the optimal regularity of $w$.   

\begin{thm}[Optimal regularity]
\label{thm:optimal_regularity}
Let $w$ as above with $w = \psi$ on $\{x_1 = 0\}$, where $\psi\in C^{1/2}(\{x_1 = 0\})$ and $\psi \equiv 0$ on $\{y = 0\}$. Then $w\in C^{1/2}(B_{1/2}\cap \{x_1\ge 0\})$ with 
\[
\|w\|_{C^{1/2}(B_{1/2}\cap \{x_1\ge 0\})}\le C\left(\|\psi\|_{C^{1/2}(B_1\cap \{x_1 = 0\})}+\|w\|_{L^\infty(B_1\cap \{x_1\ge0\})}  + 1 \right)
\]
for some $C$ depending only on $n$. 
\end{thm}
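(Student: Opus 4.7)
The plan is to combine the known interior optimal $C^{1/2}$ regularity for minimizers of the thin one-phase functional (see, e.g., \cite{CRS10, DR12, EKPSS20}) with a new boundary barrier estimate localized at the corner set $\Sigma := \{x_1 = y = 0\}$, where the fixed boundary meets the thin space. Off $\Sigma$, the result is essentially classical: interior to $\Omega$, $w$ is harmonic away from the contact set; and on $\{x_1 = 0\}\cap\{y\neq 0\}$, $w$ is harmonic in a full half-ball neighborhood with $C^{1/2}$ Dirichlet data $\psi$, so boundary Schauder gives the needed regularity. Only the behavior at corner points is genuinely new.

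The heart of the proof is the following corner barrier. For $x_0 \in \Sigma \cap B_{1/2}$, we would use the explicit function
\[
V(x_1,y) := \sqrt{2}\,U(x_1,y) = \sqrt{x_1 + \sqrt{x_1^2 + y^2}},
\]
with $U$ as in \eqref{HalfSpaceSolutionThin}, extended trivially in the tangential variables. The function $V$ is harmonic in $\{x_1 > 0\}$, vanishes on the entire corner line $\Sigma$, equals $|y|^{1/2}$ on $\{x_1 = 0\}$, and satisfies the uniform bound $V(x)\leq \sqrt{2}\,|x - x_0|^{1/2}$ (via $V^2 \leq 2(x_1 + |y|)$ and the $x''$-translation invariance of $V$). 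The hypotheses $\psi \in C^{1/2}$ and $\psi \equiv 0$ on $\{y = 0\}$ give $\psi(0,x'',y) \leq \|\psi\|_{C^{1/2}}|y|^{1/2}$ on the fixed boundary. Introducing the harmonic corrector $\phi$ in $B_1 \cap \{x_1\geq 0\}$ with $\phi = 0$ on $\{x_1 = 0\}\cap B_1$ and $\phi = 1$ on $\partial B_1 \cap \{x_1\geq 0\}$, the function
\[
\overline w := \|\psi\|_{C^{1/2}}\,V + \|w\|_{L^\infty}\,\phi
\]
is harmonic in $B_1 \cap \{x_1 > 0\}$ and satisfies $\overline w \geq w$ on $\partial(B_1 \cap \{x_1 \geq 0\})$. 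Applying the maximum principle to $\overline w - w$ on the open set $\{w > 0\}$ (where both functions are harmonic, since $\Lambda(w) \subset \{y = 0\}$ for minimizers) and noting $\overline w \geq 0 = w$ on the contact set, we obtain $w \leq \overline w$ in $B_1 \cap \{x_1\geq 0\}$, and hence
\[
0 \leq w(x) \leq C\bigl(\|\psi\|_{C^{1/2}} + \|w\|_{L^\infty} + 1\bigr)\,|x - x_0|^{1/2}
\]
near each corner point $x_0$.

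To promote this pointwise bound to the full $C^{1/2}$ estimate, we would combine it with the interior regularity. For an arbitrary $x_0 \in \overline\Omega \cap B_{1/2}$, let $\bar x_0$ denote its projection onto $\Sigma$ and $\rho := |x_0 - \bar x_0|$. The corner bound at $\bar x_0$ controls $\|w\|_{L^\infty(B_{2\rho}(\bar x_0)\cap\Omega)} \leq C(\|\psi\|_{C^{1/2}} + \|w\|_{L^\infty} + 1)\rho^{1/2}$. When $B_{\rho/4}(x_0) \subset \Omega$, the scale-invariant interior $C^{1/2}$ estimate for minimizers then yields
\[
[w]_{C^{1/2}(B_{\rho/8}(x_0))} \leq C\Bigl(\frac{\|w\|_{L^\infty(B_{\rho/4}(x_0))}}{\rho^{1/2}} + 1\Bigr) \leq C'\bigl(\|\psi\|_{C^{1/2}} + \|w\|_{L^\infty} + 1\bigr).
\]
If instead $x_0 \in \{x_1 = 0\}$ with $|y_0| \sim \rho > 0$, the ball $B_{|y_0|/2}(x_0) \cap \{x_1 > 0\}$ avoids $\{y = 0\}$, so $w$ is harmonic there and boundary Schauder with the same $L^\infty$ bound closes the estimate. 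A covering plus triangle-inequality argument then packages these local bounds into the global $C^{1/2}$ estimate on $B_{1/2}\cap\{x_1\geq 0\}$.

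The main obstacle is constructing a corner barrier that is sharp in two senses simultaneously: it must dominate $\psi$ on the entire fixed boundary (in particular reproducing the $|y|^{1/2}$ growth on $\{x_1 = 0\}$) \emph{and} decay as $|x - x_0|^{1/2}$ at every corner point, with constants independent of $x_0 \in \Sigma$. Both features rely on the $x''$-invariance of $V$, which makes $V$ vanish along all of $\Sigma$ and delivers the uniform growth bound. A secondary subtlety is that the maximum-principle comparison requires harmonicity of $w$ on its full positivity set---a property stronger than that enjoyed by generic viscosity solutions, but one that we do have since $w$ is a minimizer of \eqref{eq:JOm_frac}.
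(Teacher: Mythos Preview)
Your proposal is correct and follows essentially the same strategy as the paper: construct a barrier showing $w(x)\le C\,\mathrm{dist}(x,\Sigma)^{1/2}$ with $\Sigma=\{x_1=y=0\}$, then combine this with interior $C^{1/2}$ estimates for thin one-phase minimizers and boundary Schauder for harmonic functions with $C^{1/2}$ data.

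The one notable difference is in the barrier itself. The paper takes the harmonic function $\varphi$ in $B_1\cap\{x_1\ge0\}$ with data $|y|^{1/2}$ on $\{x_1=0\}$ and $1$ on $\partial B_1$, and then invokes boundary Schauder for harmonic functions with H\"older data to deduce $\varphi\le C(x_1^2+y^2)^{1/4}$; comparison uses only the subharmonicity $\Delta w\ge0$ of a minimizer. You instead use the explicit function $V=\sqrt{2}\,U(x_1,y)$, which is harmonic in $\{x_1>0\}$ and for which the bound $V\le C(x_1^2+y^2)^{1/4}$ is immediate from the formula, together with a Lipschitz corrector $\phi$. Your choice is slightly cleaner because it avoids the appeal to Schauder theory to estimate the barrier. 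The subsequent case analysis is organized differently---the paper splits on the thin space according to whether the distance to the free boundary is small compared to the distance to $\{x_1=0\}$, while you project to $\Sigma$ and distinguish interior balls from points on $\{x_1=0\}\cap\{y\neq0\}$---but both reductions lead to the same local estimates and the same final covering argument.
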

\begin{proof}
Let us denote 
\[
C_\circ = \|\psi\|_{C^{1/2}(B_1\cap \{x_1 = 0\})}+\|w\|_{L^\infty(B_1\cap \{x_1\ge0\})}.
\]

Let $\varphi$ be the solution to 
\[
\left\{
\begin{array}{rcll}
\Delta \varphi & = & 0& \quad\text{in}\quad B_1\cap\{x_1\ge 0\}\\
\varphi & = & |y|^{\frac12}&\quad\text{on}\quad B_1\cap \{x_1 = 0\}\\
\varphi & = & 1&\quad\text{on}\quad \partial B_1\cap \{x_1 \ge 0\}.
\end{array}
\right.
\]

Then by the regularity up to the boundary for harmonic functions with H\"older boundary data, we have that
\[
\varphi(x) \le Cx_1^{\frac{1}{2}} + y^{\frac{1}{2}} \leq C(x_1^2+y^2)^{\frac14}\quad\text{in}\quad B_{1/2}\cap \{x_1\ge 0\},
\]
for some $C$ depending only on $n$. Since $\Delta w \ge 0$ (as $w$ minimizes the energy), by comparison principle we have that 
\begin{equation}
\label{eq:barrierfromabove}
w(x) \le C C_\circ (x_1^2+y^2)^{\frac14}\quad\text{in}\quad B_{1/2}\cap \{x_1\ge 0\}.
\end{equation}
Let us first show our estimate on the thin space:

Let $z_1, z_2\in B_{1/2}\cap \{y = 0\}\cap \{x_1\ge 0\}$, and let us denote 
\[
r_i := {\rm dist}(z_i, \Gamma)  = {\rm dist}(z_i, \bar z_i),\quad \rho_i := (z_i)_1 = {\rm dist}(z_i, \{x_1 = 0\}),\quad\text{for} \ i = 1,2,
\]
where $\Gamma$ is the free boundary of $w$ on $\{y = 0\}$, and $\bar z_1, \bar z_2\in \Gamma$ are the corresponding projections on the free boundary. Let us denote $\delta := |z_1 - z_2|$. We split into two cases:
\begin{itemize}
\item If $5 \,r_1\le \rho_1$, then $B_{4\rho_1/5}(\bar z_1)\subset \{0 < x_1 <  2\rho_1\}$, and since $w$ is a minimizer in $B_{4\rho_1/5}(\bar z_1)$ with $\bar z_1$ a free boundary point, we can invoke universal $C^{1/2}$ estimates for minimizers of the thin problem (see \cite[Remark 7.10]{EKPSS20})  to get \[
\|w\|_{C^{1/2}(B_{\rho_1/2}(\bar z_1))}\le C,
\]
and hence if $\delta \le \rho_1/2$ we get $|w(z_1) - w(z_2)|\le C \delta^{\frac12}$. On the other hand, if $\delta \ge \rho_1/2$ we have $w(z_1)\le CC_\circ \delta^{\frac12}$ by \eqref{eq:barrierfromabove} and $\rho_2\le \rho_1+\delta \le 3\delta$, so that
$
w(z_2) \le C \rho_2^\frac12 \le C C_\circ\delta^\frac12
$
again by \eqref{eq:barrierfromabove}. In all cases we get $|w(z_1)-w(z_2)|\le C(1+C_\circ) \delta^\frac12$.

\item If $5\,r_1\ge \rho_1$, $\Delta w = 0$ in $B_{r_1}(z_1)\cap \{x_1 \ge 0\}$ with $C^{\frac12}$ boundary datum on $\{x_1 = 0\}$. Combined with the fact that $w \le CC_\circ r_1^\frac12$ in $B_{r_1}(z_1)\cap \{x_1\ge 0\}$ by \eqref{eq:barrierfromabove}, estimates for the Laplace equation yield 
\[
\|w\|_{C^{1/2}(B_{r_1/2}(z_1)\cap \{x_1\ge 0\})} \le C C_\circ,
\]
and so if $\delta < r_1/2$ we are done. On the other hand, if $\delta \ge r_1/2$ we have $w(z_1)\le C \delta^\frac12$ directly from \eqref{eq:barrierfromabove}, and as above we have $\rho_2 \le \rho_1+\delta \le 11\delta$ and so
$
w(z_2) \le  CC_\circ \delta^\frac12
$
thanks to \eqref{eq:barrierfromabove}.
\end{itemize}

From the estimates on the thin space we   obtain our desired estimate in $B_{1/2}\cap \{x_1\ge 0\}$ by standard techniques using boundary estimates and the fact that we have a barrier in \eqref{eq:barrierfromabove}. Indeed, to obtain the result in $\{y > 0\}$ it is enough to show 
\[
{\rm osc}_{B_r(z)} w \le C (1+C_\circ) r^\frac12\quad\text{for all}\quad B_{2r}(z)\subset B_{1/2}\cap \{x_1\ge 0\}\cap \{y \ge 0\}
\]
(see, for example, \cite[Appendix A]{FR22}). Now, given any $z\in B_{1/2}\cap \{x_1\ge 0\}\cap \{y \ge 0\}$, let us suppose $\rho_1 := z_1 \le z_{n+1}$ (the other case is symmetric). From the above observation, it is enough to see that $\|w\|_{C^{1/2}(B_{\rho_1/2}(z_1))}\le C (1+C_\circ)$, and this follows from boundary estimates for the Laplace equation together with the barrier \eqref{eq:barrierfromabove}. 
\end{proof}

Finally thanks to the previous considerations we have a second nondegeneracy type result:
\begin{thm}[Nondegeneracy in the full space near the fixed boundary]
\label{thm:nondeg2}
Under the hypotheses from Theorem~\ref{thm:optimal_regularity}, let us assume, moreover, that $\psi(x) \le \omega(y)|y|^\frac12$ for some modulus of continuity $\omega$. Let $\Gamma$ be the free boundary of $w$. Then, for any $z \in \Gamma\cap B_{1/4}\cap \{x_1\ge 0\}$ we have 
\[
\sup_{B_r(z)\cap \{x_1\ge 0\}} w \ge c r^\frac12\quad\text{for all}\quad r \in (0, 1/2),
\]
for some $c>0$ depending only on $n$ and $\omega$. 
\end{thm}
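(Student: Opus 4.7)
The plan is to argue by contradiction and compactness, extending the competitor scheme behind the classical nondegeneracy (Lemma~\ref{lem:nondeg}) from the pure thin space to the full space near the fixed boundary. Suppose the thesis fails along a sequence $z_k \in \Gamma \cap B_{1/4} \cap \{x_1 \ge 0\}$, $r_k \in (0, 1/2)$, and $\eps_k \downarrow 0$ with
\[
\sup_{B_{r_k}(z_k)\cap \{x_1\ge 0\}} w \;\le\; \eps_k r_k^{1/2}.
\]
Set $d_k := (z_k)_1/r_k \ge 0$ and introduce the rescalings $w_k(x) := r_k^{-1/2} w(z_k + r_k x)$. These are thin one-phase minimizers on $\Omega_k := B_{1/(2r_k)}(0) \cap \{x_1 \ge -d_k\}$ with $0 \in F(w_k)$, $\sup_{B_1\cap \Omega_k} w_k \le \eps_k$, and, by the crucial hypothesis $\psi \le \omega(|y|)|y|^{1/2}$, boundary values on $\{x_1 = -d_k\}\cap B_1$ bounded by $\omega(r_k|x_{n+1}|)|x_{n+1}|^{1/2}$, which tends to $0$ uniformly as $k \to \infty$.

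If $d_k \to \infty$ along a subsequence, then the fixed boundary recedes to infinity and $w_k$ is a thin one-phase minimizer on $B_1(0)$ with $0 \in F(w_k)$; the classical interior nondegeneracy (the exact competitor argument of Lemma~\ref{lem:nondeg} with comparison ball centered at $0$) forces $\sup_{B_1} w_k \ge c > 0$, contradicting $\eps_k \downarrow 0$. In the remaining regime $d_k$ stays bounded, I would run the competitor argument directly on $\Omega_k \cap B_{1/2}(0)$. I pick a smooth cutoff $\varphi$ with $\varphi \equiv 0$ on $B_{1/4}(0) \cap \{x_1 \ge -d_k\}$ and $\varphi \ge 2$ on $\partial B_{1/2}(0) \cap \{x_1 \ge -d_k\}$, and set $v_k := \min(w_k, \eps_k \varphi)$. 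The smallness of the scaled boundary data on the flat piece $\{x_1 = -d_k\}$ makes $\eps_k\varphi \ge w_k$ there for $k$ large, so $v_k = w_k$ on $\partial(\Omega_k \cap B_{1/2})$ and $v_k$ is admissible; this is precisely where the modulus-of-continuity hypothesis on $\psi$ enters. The standard Dirichlet-energy estimate $\int |\nabla v_k|^2 \le \int |\nabla w_k|^2 + C\eps_k^2$ then yields
\[
\mathcal J^0(v_k) - \mathcal J^0(w_k) \;\le\; C\eps_k^2 \;-\; \lambda\, \mathcal H^n\bigl(\{w_k>0\}\cap B_{1/4}\cap\{y=0\}\cap\{x_1 \ge -d_k\}\bigr).
\]

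The main obstacle is to bound the Hausdorff measure on the right from below by a positive constant $c_0$ uniform in $d_k \ge 0$, i.e. a uniform lower density of the thin positivity set at the free boundary point $0$ even when $0$ is close to (or on) the flat boundary. In the interior regime $d_k \ge 1/4$ this is the standard thin-density estimate for minimizers at free boundary points; in the remaining regime $d_k \to d_\infty \in [0, 1/4]$ I plan to use compactness: after passing to a subsequence, $w_k$ converges locally uniformly to some $w_\infty$ while the thin contact sets converge in $L^1_{\rm loc}$ and the free boundaries converge in the Hausdorff sense (in the spirit of Lemma~\ref{lem:nonlocalblowdown}); the limit $w_\infty$ is a minimizer on $B_1 \cap \{x_1 \ge -d_\infty\}$ with vanishing boundary data on $\{x_1 = -d_\infty\}$, and $0$ remains a free boundary point. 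A Weiss-type monotonicity argument adapted to this half-ball setting then supplies the required uniform positive density of $\{w_\infty > 0\}$ at $0$ in the thin space, and hence, via the $L^1_{\rm loc}$ convergence, for the $w_k$ as well. Once this density bound is in place, minimality $\mathcal J^0(v_k) \ge \mathcal J^0(w_k)$ reads $\lambda c_0 \le C\eps_k^2$, contradicting $\eps_k \downarrow 0$ and closing the proof.
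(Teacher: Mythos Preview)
Your approach has a genuine gap in the density step for the regime $d_k \to d_\infty \in [0,1/4]$. You propose to pass to a limit $w_\infty$ and extract positive density of $\{w_\infty>0\}$ at $0$ via a Weiss-type argument, but the very hypothesis you are contradicting---$\sup_{B_1\cap\Omega_k} w_k \le \eps_k \to 0$---forces $w_\infty\equiv 0$ on $B_1$ by locally uniform convergence. There is no free boundary in the limit, so the claim that ``$0$ remains a free boundary point'' is false, and the $L^1_{\rm loc}$ convergence of contact sets only tells you the limiting contact set is the whole thin slice. More concretely, Lemma~\ref{lem:nondeg} applied to $w_k$ shows every point $x'$ of $\{w_k>0\}\cap\{y=0\}\cap B_{1/2}$ satisfies $c\,\mathrm{dist}(x',\Gamma_k')^{1/2}\le w_k(x',0)\le\eps_k$; hence the thin positivity set in $B_{1/2}$ lies within an $O(\eps_k^2)$-neighborhood of $\Gamma_k'$, and nothing you have said prevents its $\mathcal H^n$-measure from being $o(1)$. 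Positive density of $\{w>0\}$ at free boundary points is, in the standard development, a \emph{consequence} of nondegeneracy, so invoking it here is circular. (There is also a smaller issue: with $\varphi\equiv 0$ on $B_{1/4}$ and $d_k<1/4$, one has $\eps_k\varphi=0$ on $\{x_1=-d_k\}\cap B_{1/4}$ while $w_k=r_k^{-1/2}\psi>0$ for $y\ne0$ there, so $v_k$ does not match the fixed boundary data and is not admissible as written.)

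The paper avoids density altogether. It first proves a growth lemma (Lemma~\ref{lem:interm}) by a compactness argument in which the limit is guaranteed nontrivial: the normalization there is $u_k(\bar z_k)\ge c_*>0$ via Lemma~\ref{lem:nondeg}, not $\sup u_k\to 0$, and the contradiction comes from the maximum principle for $(-\Delta)^{1/2}$ at a global interior maximum of the limit. With this lemma in hand it runs the chain argument of Berestycki--Caffarelli--Nirenberg: starting from any $z_0\in\{w>0\}$ close to the given free boundary point, one builds inductively a sequence $z_j$ with $w(z_{j+1})\ge(1+\delta_\circ)w(z_j)$ and $|z_{j+1}-z_j|\le (M+1)r_j$; when the chain exits $B_r$, geometric growth together with Lemma~\ref{lem:nondeg} yields $w(z_{k_\circ})\ge c\,r^{1/2}$.
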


In order to prove this estimate we first show the following lemma:
\begin{lem}
\label{lem:interm}
Let $u$ be a minimizer of the thin one-phase energy in $B_{2Mr}\cap \{x_1\ge -\lambda\}$ for some $M, r> 0$, $\lambda \ge 0$, such that $0\in F(u)$ and 
\[
\|u \|_{C^{1/2}(B_{2Mr})\cap \{x_1\ge -\lambda\}}\le C_*
\]
for some $C_* > 0$. Let us suppose, moreover, that $0 \le u \le \omega(y)|y|^{1/2}$ on $\{x_1 = -\lambda\}$ for some modulus of continuity $\omega$, and that $B_r'(  z)\subset \{u > 0\}\cap \{x_1\ge -\lambda\}$ for some $| z| = r$. 

There exists $r_\circ$,  $\delta_\circ$, and $M$ such that if $r < r_\circ$ then 
\[
\sup_{B_{Mr}'\cap \{x_1\ge -\lambda\}} u \ge (1+\delta_\circ) u( z),
\]
where the constants $r_\circ$, $\delta_\circ$, and $M$ depend only on $n$, $C_*$, and $\omega$. 
\end{lem}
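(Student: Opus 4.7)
The plan is to argue by contradiction via a compactness--rescaling scheme. Negating the conclusion, for each $k\in \N$ the choice $r_\circ = 1/k$, $M = k$, $\delta_\circ = 1/k$ produces a minimizer $u_k$, a scale $r_k < 1/k$, a parameter $\lambda_k\ge 0$ and a thin-space point $z_k$ with $|z_k| = r_k$ for which the hypotheses hold while
\[
\sup_{B'_{k r_k}\cap \{x_1 \ge -\lambda_k\}} u_k < \bigl(1 + \tfrac{1}{k}\bigr)\, u_k(z_k).
\]
I rescale via $\tilde u_k(x) := u_k(r_k x)/r_k^{1/2}$, a minimizer in $B_{2k}\cap\{x_1\ge -\lambda_k/r_k\}$ with $0\in F(\tilde u_k)$, $\tilde z_k := z_k/r_k$ of unit length, and $B'_1(\tilde z_k)\subset \{\tilde u_k > 0\}$. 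The uniform $C^{1/2}$ estimate being scale invariant, Arzel\`a--Ascoli provides, along a subsequence, $\tilde u_k\to \tilde u_\infty$ locally uniformly, with $\tilde z_k\to \tilde z_\infty$ and $\lambda_k/r_k \to \lambda_\infty\in [0,\infty]$. Standard compactness for minimizers of \eqref{eq:JOm_frac} and Hausdorff convergence of free boundaries yield that $\tilde u_\infty$ is a minimizer in the limit domain ($\R^{n+1}$ if $\lambda_\infty=\infty$, otherwise $\{x_1\ge -\lambda_\infty\}$), with $0\in F(\tilde u_\infty)$ and $B'_1(\tilde z_\infty)\subset \{\tilde u_\infty>0\}$; since $\omega(r_k y)\to 0$, one also has $\tilde u_\infty=0$ on $\{x_1 = -\lambda_\infty\}$ in the finite case.

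Lemma~\ref{lem:nondeg} applied to $\tilde u_k$ at $\tilde z_k$ gives $\tilde u_k(\tilde z_k)\ge c>0$, so $M := \tilde u_\infty(\tilde z_\infty)$ is finite and positive, and the failure hypothesis passes to the \emph{thin-space} bound
\[
\sup_{\{y=0\}\cap \{x_1\ge -\lambda_\infty\}} \tilde u_\infty \le M.
\]
The crux is to upgrade this to a global bound $\tilde u_\infty\le CM$. Since $\tilde u_\infty$ is harmonic in $\{y>0\}\cap\{x_1>-\lambda_\infty\}$, has boundary values bounded by $M$ on $\{y=0\}$ and vanishing on $\{x_1=-\lambda_\infty\}$ (when applicable), and has $O(|x|^{1/2})$ growth at infinity (from the $C^{1/2}$ bound and $\tilde u_\infty(0)=0$), I invoke Phragm\'en--Lindel\"of: in the half-space case the Herglotz representation of non-negative harmonic functions on $\{y>0\}$ forces the unique linear additive term $ay$ to vanish (it is incompatible with $O(|x|^{1/2})$ growth) and bounds $\tilde u_\infty$ by the Poisson extension of its thin trace; in the quadrant case $\{x_1>-\lambda_\infty,\ y>0\}$ the critical growth exponent is $2$, well above $1/2$. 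Evenness in $y$ then yields the global bound.

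The contradiction follows from the nondegeneracy of minimizers at the free-boundary point $0$. The standard energy-comparison competitor (vanishing in $B_{R/2}(0)$, equal to $\tilde u_\infty$ outside $B_R(0)$), together with the density estimate for the thin positive set at $0$, produces
\[
\sup_{B_R(0)\cap \{x_1\ge -\lambda_\infty\}} \tilde u_\infty \ge c\, R^{1/2}\qquad\text{for every } R>0,
\]
and letting $R\to\infty$ contradicts the global bound. The main obstacle I foresee is rigorously establishing this nondegeneracy at all scales in the half-space setting ($\lambda_\infty<\infty$): when $R\gg\lambda_\infty$, the ball $B_R(0)$ extends past the fixed boundary $\{x_1=-\lambda_\infty\}$, so one must construct an admissible competitor respecting the domain and verify that a positive fraction of $B'_{R/2}(0)\cap \{x_1\ge -\lambda_\infty\}$ lies in $\{\tilde u_\infty>0\}$ with energy gain exceeding the Dirichlet cost.
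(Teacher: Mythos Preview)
Your compactness--rescaling setup (Steps 1--2) is exactly the paper's: one obtains a limit $\tilde u_\infty$ on $\{x_1\ge -\lambda_\infty\}$ (or $\R^{n+1}$ if $\lambda_\infty=\infty$) with $\tilde u_\infty(0)=0$, $\tilde u_\infty(\tilde z_\infty)\ge c_*>0$ by Lemma~\ref{lem:nondeg}, and $\tilde z_\infty$ a global maximum of $\tilde u_\infty$ on the thin space $\{y=0\}$.

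The gap you flag in Step~4 is real and, worse, essentially circular. The boundary nondegeneracy you need when $\lambda_\infty<\infty$---namely $\sup_{B_R\cap\{x_1\ge -\lambda_\infty\}}\tilde u_\infty \ge cR^{1/2}$ for arbitrarily large $R$---is precisely the conclusion of Theorem~\ref{thm:nondeg2}, and Lemma~\ref{lem:interm} is the key input to that theorem. The energy-competitor argument you sketch needs a density lower bound $\mathcal H^n(\{\tilde u_\infty>0\}\cap B'_{R/2}\cap\{x_1\ge -\lambda_\infty\})\ge cR^n$ at scales $R\gg\lambda_\infty$, and there is no off-the-shelf result giving this in the half-space setting with fixed boundary.

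The paper's contradiction is much shorter and avoids both this issue and your Phragm\'en--Lindel\"of detour (Step~3). Since $B'_1(\tilde z_\infty)\subset\{\tilde u_\infty>0\}$, the function $\tilde u_\infty$ is harmonic in a full $(n{+}1)$-dimensional neighborhood of $\tilde z_\infty$. If $\lambda_\infty<\infty$, take the odd reflection across $\{x_1=-\lambda_\infty\}$; the resulting $\bar u_\infty$ is globally defined, even in $y$, harmonic in $\{y\neq 0\}$ and across $\{y=0\}\cap\{\bar u_\infty\neq 0\}$. Its thin trace $v:=\bar u_\infty|_{\{y=0\}}$ then has a global positive maximum at $\tilde z_\infty$, is non-constant (since $v(0)=0$), and satisfies $(-\Delta)^{1/2}v(\tilde z_\infty)=0$ because the extension is harmonic across there. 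But the $C^{1/2}$ bound gives $|v(x)|\le C_*|x|^{1/2}$, so the singular integral defining $(-\Delta)^{1/2}v(\tilde z_\infty)$ converges, and at a non-constant global maximum it is strictly positive. This is the contradiction; no full-space bound on $\tilde u_\infty$ and no large-scale nondegeneracy are needed.
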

\begin{proof}
The proof follows by contradiction, assuming instead that there is a sequence of functions $u_k$, which minimize the thin one-phase energy in $B_{2M_k r_k}\cap \{x_1 \ge -\lambda_k\}$ such that $0$ is a free boundary point for $u_k$,
\[
[u_k ]_{C^{1/2}(B_{2M_kr_k})\cap \{x_1\ge -\lambda_k\}}\le C_*,
\]
$0 \le u_k \le \omega(y)|y|^{1/2}$ on $\{x_1 = -\lambda_k\}$, $B_{r_k}'(  z_k)\subset \{u_k > 0\}\cap \{x_1\ge -\lambda_k\}$ for some $| z_k| = r_k$, but
\[
\lim_{k \to \infty} \frac{1}{u_k(z_k)} \sup_{B_{M_kr_k}'\cap \{x_1\ge -\lambda_k\}} u_k = 1,
\]
for some sequence $M_k \to \infty$ and $r_k \downarrow 0$.

If we define $\bar u_k(x) :=  \frac{u_k(r_k x)}{r_k^{1/2}}$, then $(\bar u_k)_{k\in \N}$ are a minimizers in $B_{2M_k}\cap \{x_1\ge -\lambda_k r_k^{-1}\}$ with 0 a free boundary point, satisfying
\[
[\bar u_k ]_{C^{1/2}(B_{2M_k})\cap \{x_1\ge -\lambda_k r_k^{-1}\}}\le C_*,
\]
$0 \le \bar u_k \le \omega(r_k y)|y|^{1/2}$ on $\{x_1 = -\lambda_k r_k^{-1}\}$, $B_{1}'(  \bar z_k)\subset \{\bar u_k > 0\}\cap \{x_1\ge -\lambda_kr_k^{-1}\}$ for some $| \bar z_k| = 1$, and
\[
\lim_{k \to \infty} \frac{1}{\bar u_k(\bar z_k)}\sup_{B_{M_k }'\cap \{x_1\ge -\lambda_kr_k^{-1}\}} \bar u_k = 1.
\]

In particular, thanks to Lemma~\ref{lem:nondeg} there exists some universal constant $c_*$ such that
\[
\bar u_k (\bar z_k) \ge c_*>0. 
\]

Since $\bar u_k(0)=0$, the uniform estimates on the $C^{1/2}$-seminorm in $B_{2M_k}$ allows us to apply Arzela-Ascoli to obtain that
\[
\bar u_k \to u_\infty\quad\text{locally uniformly in $\R^{n+1}$}.
\]
%{\color{blue} Max: I rewrote the argument but have left the original commented out} 
%Perhaps passing to a subsequence we have that $\bar z_k\to z_\infty$ for some $|z_\infty| = 1$ and $\lambda_k r_k^{-1}\to \lambda_\infty$ for some $0\le \lambda_\infty \le \infty$. By the assumption that $M_k\uparrow \infty$ we have that $\sup_{x_1 \geq -\lambda_\infty} u_\infty(x, 0) = u(z_\infty)$. Furthermore, as $\omega$ is a modulus of continuity we have that $u_\infty \equiv 0$ on $\{x_1 = -\lambda_\infty\}$. So we can reflect $u_\infty$ to be odd across the set $\{x_1 = -\lambda_\infty \}$. This implies that $z_\infty$ is a global supremum of $u|_{\{y= 0\}}$. We note that $u_\infty$ is harmonic away from $\{y = 0\}$ so $(\Delta)^{1/2}u_\infty = 0$ on $\{y= 0\}\cap \{u_\infty > 0\}$. But then $z_\infty$ being a global supremum violates the strong maximum principle for $(\Delta)^{1/2}$ and so we have the desired contradiction. 

Up to a subsequence, we assume $\bar z_k \to \bar z_\infty$ with $|\bar z_\infty| = 1$, so that $B_1'(z_\infty) \subset \{u_\infty > 0\}$ (again invoking Lemma~\ref{lem:nondeg}). 
Furthermore, $\bar u_k (\bar z_k) \ge c_*>0$, so we know that $u_\infty(0)= 0$ but  $u_\infty \not \equiv 0$. Up to a subsequence, we can further assume $\lambda_k r_k^{-1}\to \lambda_\infty$ for some $0\le \lambda_\infty \le \infty$, so  $B_1'(\bar z_\infty) \subset \{u_\infty > 0\}\cap \{x_1\ge -\lambda_\infty\}$ (again thanks to Lemma~\ref{lem:nondeg}). In particular, $u_\infty$ is harmonic in $\{y \neq 0\}$ and in $B_1'(\bar z_\infty)$. 

Finally, we also have $u_\infty = 0$ on $\{x_1 = -\lambda_\infty\}$ and $\bar z_\infty$ is a global maximum for $u_\infty$. By taking the odd reflection of $u_\infty$ with respect to $-\lambda_\infty$ (if $\lambda_\infty < \infty$), denoted $\bar u_\infty$, we have that $\bar u_\infty$ is a globally defined function, harmonic in $\{y\neq 0\}$ and on $\{y = 0\}\cap \{\bar u_\infty \neq 0\}$, with a global positive maximum at $\bar z_\infty$. This contradicts the maximum principle for the fractional Laplacian $(-\Delta)^\frac12$. 
\end{proof}

Using the previous lemma, we can now prove Theorem~\ref{thm:nondeg2}:
\begin{proof}[Proof of Theorem~\ref{thm:nondeg2}]
We follow the ideas of \cite{BCN90} (see also \cite{CRS10}). 

Let $r_\circ$, $\delta_\circ$, and $M$ be given by Lemma~\ref{lem:interm} with $\omega$ given by the statement and $C_*$ given by Theorem~\ref{thm:optimal_regularity}. Let $z\in \Gamma\cap B_{1/4}\cap \{x_1\ge 0\}$ as in the theorem statement. Translate $z$ to the origin  and we are in a situation where, as long as $2Mr \le \frac12$, we can apply Lemma~\ref{lem:interm} with $\lambda = -z_1$. 

Let $ 0<r <r_\circ$ be fixed. We construct inductively starting from $z_0\in B_{r/10}\cap \{w > 0\}$ a sequence of points $(z_k)_{k\in \N}$ with $z_k\in \{w > 0\}$ and $r_k := {\rm dist}(z_k, \Gamma) = {\rm dist}(z_k, \bar z_k)$ such that $z_{k+1}\in B'_{Mr_k}(\bar z_k)$ and 
\[
u(z_{k+1})\ge (1+\delta_\circ) u(z_k),
\]
thanks to Lemma~\ref{lem:interm}. Observe that 
\[
|z_{k+1}-z_k|\le (M+1) r_k
\]
and that $u(z_k)$ is increasing geometrically. We can do this as long as $B'_{Mr_k}(\bar z_k) \subset B'_{3/4}(-z)$

We denote by $k_\circ+1$ the first value of $k\in \N$ such that $z_{k}$ falls outside of $B_{r}$. If $r_{k_\circ}> \frac{r}{10(M+1)}$, then $u(z_{k_\circ}) \geq cr_{k_\circ}^{1/2} \geq c'r^{1/2}$ by Lemma \ref{lem:nondeg} and we are done. So we can assume that $r_{k_\circ} < \frac{r}{10(M+1)}$ which means that $|z_{k_{\circ}}| > r- (M+1)r_{k_\circ} > \frac{9r}{10}$. Also in this case $B'_{Mr_{k_\circ}}(\bar z_{k_\circ}) \subset B'_{3/4}(-z)$. 

Using the estimates above
\[
u(z_{k_\circ}) \ge \sum_{1 \le i \le k_\circ} \left( u(z_i) - u(z_{i-1})\right) \ge \delta_\circ \sum_{0 \le i \le k_\circ-1} u(z_{i}).
\]
Now, thanks to Lemma~\ref{lem:nondeg} we know $u(z_{i}) \ge c r_i^{1/2}$, and thus
\[
u(z_{k_\circ}) \ge \frac{\delta_\circ c}{(M+1)^{1/2}} \sum_{0 \le i \le k_\circ-1} |z_{i+1}-z_i|^{1/2} \ge c' \left|\sum_{0 \le i \le k_\circ-1} (z_{i+1}-z_i)\right|^{1/2}\ge c'' r^{1/2},
\]
for some $c''$ depending only on $n$ and $\omega$. That is, given $0< r < r_\circ$ we have found a point $z_{k_\circ}\in B_{r}$ such that $u(z_{k_{\circ}}) \ge c'' r^{1/2}$ for some $c''$. Since $r$ was arbitrary and $r_\circ$ and $M$ depend only on $n$ and $\omega$, we get the desired result. 
\end{proof}

\end{document}